\theoremstyle{plain}
\newtheorem{theorem}{Theorem}
\newtheorem{lemma}{Lemma}
\newtheorem{proposition}{Proposition}
\theoremstyle{definition}
\theoremstyle{remark}
\newtheorem{remark}{Remark}
\begin{document}
\title
[Polarizations and Convergence of holomorphic sections]
{Polarizations and Convergences of holomorphic sections on the tangent bundle of a Bohr-Sommerfeld Lagrangian submanifold}
\author{Yusaku Tiba}
\date{}

\begin{abstract}
Let $(M, \omega)$ be a K\"ahler manifold and let $(L, \nabla)$ be a prequantum line bundle over $M$.  
Let $X \subset M$ be a Bohr-Sommerfeld Lagrangian submanifold of $(L, \nabla)$.  
In this paper, we study an asymptotic behaviour of holomorphic sections of $L^k$ as $k \to \infty$.  
Our first result shows that the $L^2$-norm of sections of $L^k$ are bounded below around $X$ 
if these sections converge on $X$ under a suitable trivialization of $L^k$.   
Since $X$ is a Lagrangian submanifold, we consider that a neighborhood of $X$ is embedded in the tangent bundle $TX$.  
Let $\Psi_{k}: TX \to TX$ be a multiplication by $\frac{1}{\sqrt{k}}$ in the fibers.  
The pullback of the K\"ahler polarization by $\Psi_k$ converges to the real polarization, whose leaves are fibers of $TX$, as $k \to \infty$.  
Let $(f_k)_{k \in \mathbb{N}}$ be holomorphic sections of $L^k$ near $X$.  
By trivializing $L^k$, we consider $f_k$ as a function.  
In our second result, we show that $\Psi^* f_k$ 
converges to a fiberwise constant function on $TX$ as $k \to \infty$ under some condition on Sobolev norms of $f_k$.  
\end{abstract}

\maketitle

\subjclass{{\bf 2010 Mathematics Subject Classification.} 32A25, 53D12}


\section{Introduction}\label{section:1}
Any continuous function on the unit circle $S^1 \subset \mathbb{C}$ can be approximated uniformly by polynomials in $e^{\sqrt{-1} m \theta}$ ($m \in \mathbb{Z}$).  
If we replace $e^{\sqrt{-1}m\theta}$ with $z^m$, continuous functions can be approximated by holomorphic functions on a neighborhood of $S^1$.   
Such an approximation theorem is generalized to higher-dimensional cases.  
Let $X \subset \mathbb{C}^n$ be a totally real compact submanifold.  
Let $f$ be a continuous function on $X$.  
Then $f$ can be approximated uniformly by holomorphic functions defined on a neighborhood of $X$ (see Theorem~1 of \cite{Ran-Siu}).   

We consider a corresponding theorem for holomorphic sections of a line bundle.  
Let $(M, \omega)$ be a K\"ahler manifold of complex dimension $n$ with a complex structure $J$.  
Let $(L, \nabla, h)$ be a holomorphic prequantum line bundle over $M$ with a Hermitian metric $h$.  
That is, $\nabla$ is the Chern connection of $(L, h)$ whose curvature form is $F^{\nabla} = -2\sqrt{-1} \omega$.   
We denote the $k$-th tensor power of $L$ by $L^k$.   
Let $g_1, g_2$ be sections of $L^k$ on $M$.  
We define the integral product $(g_1, g_2)_{h^k} = \int_{M} \langle g_1, g_2 \rangle_{h^k} \frac{\omega^n}{n!}$ where 
$\langle g_1, g_2 \rangle_{h^k}$ is the pointwise scalar product.  
Put $\|g_{1}\|^2_{h^k} = (g_1, g_1)_{h^{k}}$ and $|g_1|^2_{h^k} = \langle g_1, g_1 \rangle_{h^k}$.  
We denote by $L^2(M, L^k)$ the Hilbert space of $L^2$-integrable sections of $L^k$, and 
denote by $\Gamma(M, L^k)$ the space of holomorphic sections of $L^k$ over $M$.  
Let $X \subset M$ be a compact Lagrangian submanifold of $(M, \omega)$.  
We call $X$ a Bohr-Sommerfeld Lagrangian submanifold if the restricted line bundle $(L|_{X}, \nabla|_{X})$ is a trivial line bundle, that is, there exists a smooth section $\zeta$ of $L|_{X}$ on $X$ such that $\nabla|_{X} \zeta = 0$ and that $|\zeta|_{h} = 1$.    
Since any continuous function can be approximated uniformly by a smooth one on $X$, 
we will only consider smooth functions.  
Let $f \in C^{\infty}(X)$.  
If $M$ is compact, 
there exists a sequence of holomorphic sections $(f_{k})_{k \in \mathbb{N}}$ ($f_k \in \Gamma(M, L^k)$) such that $f_{k}/\zeta^k$ approximate $f$ uniformly on $X$.  
A straightforward way to construct such a sequence is to take the Bergman projection of a distribution $f \zeta^k dv_{X}$.  
Here $dv_{X}$ is the volume density on $X$ induced by the Riemannian metric $g_{\omega}(\cdot, \cdot) = \omega(\cdot, J\cdot)$.  
Let $K_{k}(z, w)$ ($z, w \in M$) be the Bergman kernel of $L^k$.  
$K_{k}(z, w)$ is the Schwartz kernel of the orthogonal projection from $L^2(M, L^k)$ to $L^{2}(M, L^k)\cap \Gamma(M, L^k)$.  
Put 
\[
s_{f, k}(z) = \left(\frac{\pi}{2k}\right)^{n/2} \int_{X} K_k(z, x) f(x) \zeta^k(x) dv_{X}(x).  
\]
$\left(\frac{2k}{\pi}\right)^{n/2}s_{f, k}$ represents the quantizations of $(X, f dv_{X})$ at Planck's constant $1/k$.  
An asymptotic behaviour of $s_{f, k}$ is extensively studied (e.g., \cite{Bor-Pau-Uri}, \cite{Deb-Pao}, \cite{Pao}, \cite{Ioo}).  
It is known that 
$\lim_{k \to \infty}\frac{s_{f, k}|_{X}}{\zeta^k} = f$ uniformly on $X$, and that 
\[
\lim_{k \to \infty}\left(\frac{2k}{\pi} \right)^{n/2}\|s_{f, k}\|^2_{h^k} = \int_{X} |f|^2 dv_{X}
\]
(see Section~\ref{section:7}).  
The following proposition shows that $(s_{f, k})_{k \in \mathbb{N}}$ is a minimum $L^2$-norm sequence which approximates $f$.   
\begin{proposition}\label{proposition:0}
Assume that $M$ is compact.  
Let $f$ be a smooth function on $X$.  
Let $f_k \in \Gamma(M, L^k)$ ($k \in \mathbb{N}$) be a sequence of holomorphic sections such that 
$\lim_{k \to \infty}\frac{f_k|_{X}}{\zeta^k}=f$ in the $L^2$-norm on $X$ with respect to the measure $dv_X$.  
Then 
\begin{equation}\label{equation:0}
\liminf_{k \to \infty} \left(\frac{2k}{\pi} \right)^{n/2} \|f_k\|^2_{h^k}\geq \int_{X}|f|^2 dv_{X}, 
\end{equation}
and the equation holds if and only if $\liminf_{k \to \infty} k^{n/2}\|f_k-s_{f, k}\|^2_{h^k} = 0$.  
\end{proposition}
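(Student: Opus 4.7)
The plan is to reduce both assertions to the Pythagorean-style expansion
\[
\|f_k\|^2_{h^k} = \|s_{f,k}\|^2_{h^k} + 2\Re(s_{f,k},\, f_k - s_{f,k})_{h^k} + \|f_k - s_{f,k}\|^2_{h^k},
\]
combined with the asymptotic $(2k/\pi)^{n/2}\|s_{f,k}\|^2_{h^k} \to \int_X |f|^2\,dv_X$ taken as input from Section~\ref{section:7}. Once I show that the cross term, rescaled by $(2k/\pi)^{n/2}$, tends to $0$, the expansion becomes
\[
\left(\tfrac{2k}{\pi}\right)^{n/2}\|f_k\|^2_{h^k} = \int_X |f|^2\,dv_X + \left(\tfrac{2k}{\pi}\right)^{n/2}\|f_k - s_{f,k}\|^2_{h^k} + o(1),
\]
from which \eqref{equation:0} is immediate (the rescaled norm square is non-negative) and the equality case is characterized by $\liminf (2k/\pi)^{n/2}\|f_k - s_{f,k}\|^2_{h^k} = 0$, which is equivalent to $\liminf k^{n/2}\|f_k - s_{f,k}\|^2_{h^k} = 0$ up to the positive constant $(2/\pi)^{n/2}$.

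The main computation is the cross inner product $(s_{f,k}, f_k)_{h^k}$. Unfolding the definition of $s_{f,k}$ and swapping integrals via Fubini—legitimate because compactness of $M$ makes $K_k$ smooth on $M\times M$ and all integrands are bounded—I reduce matters to the Bergman reproducing identity applied to the holomorphic section $f_k \in \Gamma(M, L^k) \subset L^2(M, L^k)$:
\[
(K_k(\cdot, x)\sigma,\, g)_{h^k} = \langle \sigma,\, g(x)\rangle_{h^k} \quad \text{for } \sigma \in L^k_x,\ g \in \Gamma(M, L^k).
\]
Combined with $|\zeta|_h = 1$, this yields
\[
(s_{f,k}, f_k)_{h^k} = \left(\frac{\pi}{2k}\right)^{n/2} \int_X f(x)\,\overline{(f_k/\zeta^k)(x)}\,dv_X(x).
\]
The hypothesis $f_k/\zeta^k \to f$ in $L^2(X, dv_X)$ together with Cauchy--Schwarz applied to the Hermitian $L^2$-pairing then forces $(2k/\pi)^{n/2}(s_{f,k}, f_k)_{h^k} \to \int_X |f|^2\,dv_X$. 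Subtracting the asymptotic for $\|s_{f,k}\|^2_{h^k}$ shows that $(2k/\pi)^{n/2}(s_{f,k}, f_k - s_{f,k})_{h^k} \to 0$, which is exactly the vanishing needed to close the argument.

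I do not foresee a genuine obstacle, as the whole proof is a Pythagorean decomposition together with a single reproducing-kernel identity. The only care required is in justifying Fubini and the reproducing identity in the formula for $(s_{f,k}, f_k)_{h^k}$, both of which are routine given compactness of $M$ and smoothness of $K_k$ on $M\times M$. The substantive analytic input—the asymptotic $(2k/\pi)^{n/2}\|s_{f,k}\|^2_{h^k} \to \int_X |f|^2\,dv_X$—is explicitly deferred to Section~\ref{section:7} and is not part of this proof.
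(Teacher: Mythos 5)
Your proof is correct and takes essentially the same approach as the paper: both expand $\|f_k - s_{f,k}\|^2_{h^k}$ (or equivalently $\|f_k\|^2_{h^k}$), evaluate the cross term via the Bergman reproducing identity (the paper's Proposition~\ref{proposition:-1}), and feed in the asymptotic for $\|s_{f,k}\|^2_{h^k}$. The only cosmetic difference is that you rederive the reproducing identity from Fubini and the Bergman reproducing property rather than citing Proposition~\ref{proposition:-1}.
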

The proof of the above proposition is simple and will be given in Section~\ref{section:2}.  
In this paper, we investigate Proposition~\ref{proposition:0} further.     
The Bergman kernel decays rapidly off the diagonal as $k \to \infty$.  
Hence $s_{f, k}$ becomes very small away from $X$ and is concentrated around $X$. 
Our interest lies in what happens to holomorphic sections around $X$.  
The first main result (Theorem~\ref{theorem:2}) shows that the inequality (\ref{equation:0}) holds \textit{micro-locally} around $X$ (see Section~\ref{section:4}).

The second main result (Theorem~\ref{theorem:3}) is motivated by the geometric quantization.   
A polarization on a symplectic manifold $(M, \omega)$ is a Lagrangian and integrable subbundle $\mathcal{P}$ of $TM\otimes \mathbb{C}$, that is, $\mathcal{P}$ satisfies $\omega(\mathcal{P}, \mathcal{P}) = 0$, $\mathrm{rank}\, \mathcal{P} = \frac{1}{2}\dim_{\mathbb{R}} M$, and $[\mathcal{P}, \mathcal{P}] \subset \mathcal{P}$.   
We call $\mathcal{P}$ a real polarization if $\mathcal{P}$ is given by the complexification of a subbundle of $TM$.  
By completing the space of $\mathcal{P}$-parallel sections of prequantum line bundles in some sense, we get a Hilbert space. 
Geometric quantization associates $(M, \omega)$ to this Hilbert space. 
For example, if $(M, \omega)$ is a K\"ahler manifold with a complex structure $J$, 
there exists the K\"ahler polarization $P_{J} = T^{(0, 1)}M$, and the Hilbert space $\Gamma(M, L^k) \cap L^2(M, L^k)$ is regarded as the quantum phase space of $X$ with the Planck constant $h=1/k$.  
Letting $k$ tend to infinity corresponds to letting $h$ tend to 0, which is referred to as the \textit{semiclassical limit}, and asymptotic results as $k \to \infty$ expected to recover the laws of classical mechanics.  
In some cases, 
real polarizations are interpreted as degenerate K\"ahler polarizations under deformations of complex structures, and the convergence of these Hilbert spaces is studied (e.g., \cite{Bai-Flo-Mou-Nun}, \cite{Bai-Mou-Nun}, \cite{Hat}, \cite{Hat-Yam}).   
Now we explain our setup in detail.  
Let $(M, \omega)$ be a K\"ahler manifold of complex dimension $n$, $(L, \nabla, h)$ be a prequantum line bundle, and $X \subset M$ be a Bohr-Sommerfeld Lagrangian submanifold.  
Let $U \subset M$ be a sufficiently small neighborhood of $X$.  
Since $X$ is a Lagrangian submanifold, we can identify the normal bundle $JTX$ of $X$ with its tangent bundle $TX$ via $J$.  
By taking a tubular neighborhood of $X$, we may assume $U \subset TX$.  
We denote by $\Pi:TX \to X$ the projection map.  
There exists an isomorphism from $TX$ to $T^*X$ that maps $\nu \in TX$ to $g_{\omega}(\cdot, \nu) \in T^{*}X$.  
Then, we have a symplectic form 
$\omega_0$ on $TX$, which is the pull back of the canonical symplectic form on $T^*X$ by this isomorphism.  
Let $\mathcal{P}_J = T^{(0, 1)} U$ be the K\"ahler polarization of $(U, \omega)$.  
Let $\mathcal{P}_0 = \mathrm{Ker}\, d\Pi \subset TTX$, that is, the vertical polarization of $(TX, \omega_0)$.   
We define $\Psi_{k}: TX \to TX$ as the multiplication by $\frac{1}{\sqrt{k}}$ in the fibers of $TX$.  
Since $\omega = -\omega_0$ on $X$, we have 
$\lim_{k \to \infty}\sqrt{k}\Psi_k^*\omega  = -\omega_0$.  
Let $E$ be a vector field on $X$ and let $E^v$ be the vertical lift of $E$ on $TX$ (see Section~\ref{section:5} for the definition of the vertical lift).  
We have $\frac{1}{\sqrt{k}}\left((\Psi_{k*})^{-1}(E^v + \sqrt{-1}JE^{v})\right)(\nu) = E^v(\nu)$ for any $\nu \in TX$ (see Section~\ref{section:5}).  
This implies that the pullback of $(\sqrt{k} \omega, \mathcal{P}_{J})$ by $\Psi_k$ \textit{converges} to $(-\omega_0, \mathcal{P}_0)$ as $k \to \infty$.  
Let $(f_k)_{k \in \mathbb{N}}$ be holomorphic sections of $L^k$ near $X$.  
By trivializing $L^k$, we consider $f_k$ as a function.  
Then Theorem~\ref{theorem:3} shows that the pullback of $f_k$ by $\Psi_k$ 
converges to a fiberwise constant function on $TX$ as $k \to \infty$ under some condition on Sobolev norms of $f_k$.  
See Section~\ref{section:6} for precise statement of Theorem~\ref{theorem:3} and this condition.  
We also explain a relation between Theorem~\ref{theorem:3} and $(s_{f, k})_{k \in \mathbb{N}}$.  
Let  $s$ be a smooth section of $L$ on $U$ such that $s$ is an almost holomorphic extension of the $\nabla$-parallel section $\zeta \in C^{\infty}(X, L|_{X})$ (see Section~\ref{section:2}).   
In Section~\ref{section:7}, we prove the asymptotic expansion of $(s_{f, k})_{k \in \mathbb{N}}$, and prove that $\lim_{k \to \infty}\Psi_{k}^{*}(s_{f, k}/s^k)=\Pi^*f$ uniformly on any compact subset of $TX$ as in Theorem~\ref{theorem:3}.  
Although the asymptotic expansion formula of $(s_{f, k})_{k \in \mathbb{N}}$ is already known, we provide a proof of this formula since the derivation and our setup are somewhat different from previous works.  
\medskip

We introduce some notation.  
We write $f \lesssim g$ to mean that $|f| \leq c |g|$ for some constant $c>0$ which does not depend on $k$.  
We denote $\int_{V}|g|^2_{h^k}\frac{\omega^n}{n!}$ by $\|g\|^2_{V, h^k}$ for $L^2$-integrable section $g$ of $L^k$ on an open subset $V \subset M$.  
Let $\alpha$ be a form (resp.\!\! an $L^k$-valued form). 
We denote by $|\alpha|_{\omega}$ (resp. \!\!$|\alpha|_{h^k, \omega}$) the pointwise norm of $\alpha$ with respect to $g_{\omega}$.  
\medskip 

{\it Acknowledgment.}
The author would like to thank Kota Hattori for valuable advice.  
This work was supported by the 
Grant-in-Aid for Scientific Research (KAKENHI No.\! 21K03266).

\section{Asymptotic approximation by holomorphic sections}\label{section:2}
Let $(M, \omega)$ be a K\"ahler manifold of complex dimension $n$ and let $(L, \nabla, h)$ be a holomorphic prequantum line bundle on $M$.  
Let $X$ be a Bohr-Sommerfeld Lagrangian submanifolds in $M$ 
and let $\zeta$ be a smooth section on $X$ such that $\nabla|_{X} \zeta = 0$ and that $|\zeta| = 1$.   
Let $U \subset M$ be a sufficiently small tubular neighborhood of $X$.  
Because $\zeta$ is a non-vanishing smooth section of $L|_{X}$,  we have $L|_U$ is trivial smooth line bundle.  
The Oka principle shows that $L|_U$ is trivial holomorphic line bundle. 
We take a non-vanishing holomorphic section $s_0 \in \Gamma(U, L)$.   
Take an almost holomorphic extension $\xi \in C^{\infty}(U)$ of $\zeta/s_0$, that is, $\xi = \zeta/s_0$ on $X$ and $\overline{\partial} \xi$ vanishes to any order on $X$.  
Put $s = \xi s_0 \in C^{\infty}(U, L)$.  
Then $\overline{\partial} s$ vanishes any order on $X$.  Furthermore, by Proposition~1 of \cite{Tib},   it follows that $\nabla s = 0$, $\log |s|^2 = 0$ and $d \log |s|^2 = 0$ on $X$.  
Let $p \in X$ and let $V$ be a sufficiently small neighborhood of $p$ in $M$. 
Let $E_j$ ($j = 1, \ldots, n$) be vector fields on $X \cap V$ such that $\{(E_1)_q, \ldots, (E_n)_q\}$ is an orthonormal basis of $T_{q}X$ at any $q \in X \cap V$.    
Let $J$ be the complex structure of $M$ 
and let $g_{\omega}(\cdot, \cdot) = \omega(\cdot, J\cdot)$ be the Riemannian metric induced by $\omega$.  
Since $X$ is a Lagrangian submanifold, the subspace $T_qX \subset T_qM$ is orthogonal to $\langle (JE_1)_{q}, \ldots, (JE_n)_q \rangle$ for $q \in X \cap V$.  
We take a smooth coordinate $(x, y) = (x_1, \ldots, x_n, y_1, \ldots, y_n)$ on $V$ such that 
$(0, \ldots, 0)$ corresponds to $p$, $(x_1, \ldots, x_n, 0, \ldots, 0)$ corresponds to a point in $X \cap V$, and $(x_1, \ldots, x_n, y_1, \ldots, y_n)$ corresponds to $\exp_{(x, 0)} (y_1 J E_1 + \cdots + y_n J E_{n}) \in M$.  
Here $\exp$ is the exponential map of the Riemannian manifold $(M, g_{\omega})$.  
Put $\varphi_0 = -\log |s_0|^2_{h}$ and $\varphi = -\log |s|^2_{h}= -\log |\xi|^2 + \varphi_0$.  
Then $\omega = \frac{i}{2}F^{\nabla}= \frac{i}{2}\partial \overline{\partial} \varphi_0 = \frac{i}{2}\partial \overline{\partial} (\varphi + \log |\xi|^2)$.  
Note that $\log |s|^2_{h}$ vanishes to order two and  
$\partial \overline{\partial} \log |\xi|^2$ vanishes to any order on $X$.  
Since $\omega(\frac{\partial}{\partial y_i}, J \frac{\partial}{\partial y_j}) = \delta_{ij}$ on $X$, 
we have $\varphi (x, y) = 2|y|^2 + O(|y|^3)$ on $V$ (see the first part of Section~4 of \cite{Tib}). 
Let $dv_X$ be the volume density on $X$ induced by the Riemannian metric $g_{\omega}$.  
Using the local coordinate system $(x, y)$, we consider $V$ as an open subset of $(X \cap V) \times \mathbb{R}^n$.  
Let $dv_Xdy_1 \cdots dy_n$ be the product measure on $(X \cap V) \times \mathbb{R}^n$.  
We have 
$\frac{\omega^n}{n!} = (1 + O(|y|))dv_X dy_1\cdots dy_n$ around $X \cap V$. 
Denote by $d_{X}(z)$ the distance from $z \in M$ to $X$ with respect to $g_{\omega}$.  
Let $f \in C^{\infty}(X)$.   
Now we introduce two different procedures to approximate smooth sections $fs^{k}$ ($k \in \mathbb{N}$) of $L^k$ on $X$ asymptotically by holomorphic sections.  \vspace{3mm}\\
$(\mathrm{A})$  
Assume that $M$ is either a compact manifold or a Stein manifold.  
If $M$ is a Stein manifold, we also assume that the Ricci form $\mathrm{Ric}(\omega)$ satisfies $\mathrm{Ric}(\omega) \geq -C \omega$ on $M$ for some $C > 0$.  
Let $\tilde{f} \in C^{\infty}(U)$ be an almost holomorphic extension of $f$, that is, $f = \tilde{f}$ on $X$ and $\overline{\partial} \tilde{f}$ vanishes to any order on $X$.  
Let $\kappa \in C^{\infty}_0(U)$ be a smooth function such that $\kappa = 1$ on a neighborhood of $X$.    
Put $\alpha_k = \kappa \tilde{f} s^k \in C^{\infty}(M, L^k)$.  
Then 
\[
\overline{\partial} \alpha_{k} 
= \overline{\partial} (\kappa \widetilde{f} \xi^k) s_0^k 
=  (\widetilde{f} \overline{\partial} \kappa  + \kappa \overline{\partial} \widetilde{f}
+ k \kappa \widetilde{f} \frac{\overline{\partial} \xi}{\xi} )s^k.   
\]
Since $U$ is sufficiently small, we may assume that $\varphi(z) \geq d_{X}(z)^2$ on $U$.  
Hence we have 
\[
\|\overline{\partial} \alpha_k\|^2_{h^k} \lesssim 
\int\left( |\widetilde{f}\overline{\partial} \kappa|^2 + |\kappa \overline{\partial} \widetilde{f}|^2 
+ k^2 |\widetilde{f} \kappa \overline{\partial} \xi|^2\right) e^{-kd_X(z)^2}\frac{\omega^n}{n!} = O(k^{-m})  
\]
for any $m \in \mathbb{N}$ 
since $\int_{\mathbb{R}^n} |y|^{2m-n}e^{-k|y|^2}dy_1 \cdots dy_n = O(k^{-m})$.  
By regarding $\alpha_k$ as a section of $T^{(n, 0)}U \otimes L^k$-valued $(n, 0)$-form, H\"ormander $L^2$-estimate shows that there exists 
$\beta_k \in C^{\infty}(M, L^k)$ such that  
$\overline{\partial} \beta_k = \alpha_k$ and that $\|\beta_k\|_{h^k} \lesssim O(k^{-m})$ for large $k$ (see e.g., Chapter~VIII, Theorem~6.1 of \cite{Dem}, Corollary~5.3 of \cite{Dem2}).   
\begin{proposition}\label{proposition:-2}
Put $F_{k} = \alpha_k - \beta_k \in \Gamma(M, L^k)$.  
Then $\lim_{k \to \infty}\frac{F_k|_X}{\zeta^k} = f$ uniformly on $X$.   
\end{proposition}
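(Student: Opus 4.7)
The plan is to reduce the claim to a pointwise uniform bound on $\beta_k$ restricted to $X$, and then upgrade the $L^2$-smallness of $\beta_k$ to such a pointwise estimate using the holomorphy of $F_k$.

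First I would observe that $F_k$ is holomorphic: by the H\"ormander construction $\bar{\partial}\beta_k = \bar{\partial}\alpha_k$, so $\bar{\partial}F_k = 0$.

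Next I would identify $\alpha_k|_X$ explicitly. Since $\kappa \equiv 1$ on a neighborhood of $X$, $\tilde{f}|_X = f$, and $s|_X = \xi\,s_0|_X = (\zeta/s_0)\cdot s_0 = \zeta$, we have $\alpha_k|_X = f\zeta^k$. Consequently, for $x \in X$,
\[
\frac{F_k(x)}{\zeta^k(x)} - f(x) = -\frac{\beta_k(x)}{\zeta^k(x)},
\]
and since $|\zeta|_h = 1$ on $X$, the pointwise modulus of this error equals $|\beta_k(x)|_{h^k}$. Thus the claim reduces to verifying that $\sup_{x \in X}|\beta_k(x)|_{h^k} \to 0$ as $k \to \infty$.

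Finally, I would derive this pointwise bound from the $L^2$-estimate $\|\beta_k\|_{h^k} = O(k^{-m})$. The crucial observation is that $\bar{\partial}\alpha_k$ vanishes to infinite order on $X$: writing $\alpha_k = \kappa\tilde{f}\xi^k s_0^k$, the factors $\bar{\partial}\tilde{f}$ and $\bar{\partial}\xi$ both vanish to infinite order on $X$, and the Gaussian weight $|s|_h^{2k}=e^{-k\varphi}$ with $\varphi = 2|y|^2+O(|y|^3)$ absorbs the polynomial growth in $k$, so that not only the $L^2$-norm but every Sobolev norm of $\bar{\partial}\alpha_k$ decays as $O(k^{-m})$ for all $m$. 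The Kohn--Bochner a priori estimate for the minimum $L^2$-norm solution of $\bar{\partial} u = g$ on sections of $L^k$, with constants controlled by the curvature of the positive line bundle, then propagates this to $\|\beta_k\|_{H^{s},h^k} = O(k^{-m})$ for all $s,m \geq 0$. A Sobolev embedding, carefully tracking the $k$-dependent factors arising from the rescaled metric on $L^k$, then yields $\sup_{x \in X}|\beta_k(x)|_{h^k} = O(k^{-m})$ for every $m$. The main obstacle is precisely this last passage from global $L^2$-smallness to uniform pointwise smallness on the submanifold $X$; it relies essentially on the holomorphy of $F_k$ (equivalently, the fact that $\beta_k$ solves a $\bar{\partial}$-equation with a right-hand side of infinite vanishing order on $X$), since for a generic smooth section no such pointwise bound would follow from an $L^2$-bound alone.
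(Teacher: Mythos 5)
Your reduction is correct and matches the paper: since $\kappa\equiv 1$ near $X$, $\tilde f|_X=f$, and $s|_X=\zeta$, one has $\alpha_k|_X=f\zeta^k$, so the claim is equivalent to $\sup_X|\beta_k|_{h^k}\to 0$. The difference lies in how you upgrade the $L^2$-smallness of $\beta_k$ to a uniform pointwise bound on $X$.

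You propose global semiclassical elliptic regularity (Kohn--Bochner a priori estimates with $k$-dependent constants followed by Sobolev embedding in the rescaled metric). The paper instead uses a much more local and elementary device: writing $\beta_k=b_k s^k$ with $b_k\in C^\infty(U)$, it applies H\"ormander's interior Cauchy-type estimate (Lemma~15.1.8 of \cite{Hor}) on the Euclidean ball $B(p,1/k)$ in the adapted $(x,y)$-coordinates. This gives directly
\[
|b_k(p)|^2 \lesssim \frac{1}{k^2}\sup_{B(p,1/k)}|\overline{\partial}(b_k\xi^k)|^2_{\omega}\,e^{-k\varphi_0(p)} + k^{2n}\int_{B(p,1/k)}|b_k\xi^k|^2\,e^{-k\varphi_0(p)}\frac{\omega^n}{n!},
\]
and both terms are then $O(k^{-m})$: the first because $\overline{\partial}(b_k\xi^k)=\overline{\partial}(\kappa\tilde f\xi^k)$ vanishes to infinite order on $X$ (so each factor is $O(|y|^m)$ and is absorbed by $e^{-k\varphi}$), the second by the $L^2$-bound on $\beta_k$. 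The ball radius $1/k$ is what produces the factors $k^{-2}$ and $k^{2n}$.

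The gap in your write-up is precisely the point you flag: you assert that Kohn--Bochner higher Sobolev estimates with curvature-controlled constants propagate $\|\overline{\partial}\alpha_k\|_{H^s}=O(k^{-m})$ to $\|\beta_k\|_{H^s,h^k}=O(k^{-m})$ for all $s$, and that a ``carefully tracked'' Sobolev embedding then gives the sup-norm bound, but you never exhibit the polynomial $k$-dependence of either the elliptic constants or the embedding constant (and you implicitly assume $\beta_k$ is the minimum-norm solution, which the H\"ormander existence theorem does not by itself guarantee). These points are not deal-breakers --- the $k$-dependence is polynomial while the Sobolev norms of $\overline{\partial}\alpha_k$ decay faster than any power of $k$ --- but they are nontrivial to verify, and their verification is the whole content of the sup-norm estimate. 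The paper's local interior estimate avoids this bookkeeping entirely by working at the natural concentration scale $1/k$ and never invoking global regularity theory, which is why it is the preferable route.
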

\begin{proof}
Let $b_k \in C^{\infty}(U)$ such that $\beta_k = b_k s^k$. 
Let $p \in X$.  
We take an open neighborhood $V$ of $p$ and a smooth coordinate $(x, y)$ on $V$ as described above.  
Let $B(p, r) \subset V$ be the Euclidean ball of center $p \in X$ and radius $r$ with respect to the coordinate $(x, y)$.  
By Lemma~15.1.8 of \cite{Hor}, we have 
\[
|b_k(p)|^2 = |b_k(p)\xi(p)^k|^2 e^{-k\varphi_0(p)} \lesssim \frac{1}{k^2}  \sup_{B(p, 1/k)} |\overline{\partial} (b_k \xi^k)|^2_{\omega} e^{-k\varphi_0(p)} + k^{2n} \int_{B(p, 1/k)} |b_k \xi^k|^2e^{-k\varphi_0(p)} \frac{\omega^n}{n!}.   
\] 
Here the first equality holds since $|\xi(p)|^2e^{-\varphi_0(p)} = e^{-\varphi(p)} = 1$.  
Let $c > 0$ a positive number which is larger than $C^1$-norm of $\varphi_0$ on a neighborhood of $X$.  
Then $k|\varphi_0(z) - \varphi_0(p)| \leq c$ for $z \in B(p, 1/k)$.  
For any $m \in \mathbb{N}$, we obtain 
\begin{align*} 
& \sup_{B(p, 1/k)}|\overline{\partial} (b_k\xi^k)|^2_{\omega} e^{-k\varphi_0(p)}
=  \sup_{B(p, 1/k)}|\overline{\partial} (\kappa \widetilde{f} \xi^k)|^2_{\omega} e^{-k\varphi_0(p)} 
\lesssim \sup_{B(p, 1/k)} |\overline{\partial} (\kappa \widetilde{f} \xi^k)|^2_{\omega}e^{-k \varphi_0} \\
\lesssim & \sup_{B(p, 1/k)}(|\kappa \overline{\partial} \widetilde{f}|^2_{\omega}+ k^2|\kappa \widetilde{f} \overline{\partial} \xi|^2_{\omega})|\xi^k|^2 e^{-k\varphi_0}
\lesssim \sup_{B(p, 1/k)}O(|y|^{m}) e^{-k\varphi} 
= O(k^{-m})
\end{align*}
since $\varphi(x, y) = 2|y|^2 + O(|y|^3) \geq 0$ for $y$ with small norm.  
We also have 
\[
\int_{B(p, 1/k)} |b_k \xi^k|^2 e^{-k\varphi_0(p)} \frac{\omega^n}{n!} 
\lesssim \int_{B(p, 1/k)} |b_k \xi^k|^2 e^{-k\varphi_0} \frac{\omega^n}{n!} 
\leq \|\beta_k\|^2_{h^k} 
= O(k^{-m})
\]
for any $m \in \mathbb{N}$.  
Hence $b_k \to 0$ ($k \to \infty$) and $F_{k}/s^{k}$ converges to $f$ uniformly on $X$ as $k \to \infty$.  
\end{proof}
This $(F_k)_{k \in \mathbb{N}}$ will be used in the proof of Theorem~\ref{theorem:3} in Section~\ref{section:6}. 
\vspace{4mm}\\
\noindent $(\mathrm{B})$
Assume that $M$ is compact.  
Let $K_{k}(z, w)$ ($(z, w) \in M \times M$) be the Bergman kernel of $L^k$.  
$K_{k}(z, w)$ is a smooth kernel and we put 
$s_{f, k} = \left(\frac{\pi}{2k} \right)^{n/2}\int_{X} K_{k}(z, w) f(w)\zeta^k(w) dv_{X}(w)$.  
As stated in Section~\ref{section:1}, 
$\frac{s_{f, k}|_{X}}{\zeta^k}$ converges to $f$ uniformly on $X$ and that 
$
\lim_{k \to \infty} \left(\frac{2k}{\pi}\right)^{n/2} \|s_{f, k}\|^2_{h^k} =\int_{X} |f|^2 dv_{X}$ (see also Theorem~\ref{theorem:4}).  
Furthermore, $s_{f, k}$ has the following fundamental property.  
\begin{proposition}[Proposition~3.4 of \cite{Ioo}]\label{proposition:-1}
For any $g \in \Gamma(M, L^k)$, we have the following reproducing property: 
\[
(g, s_{f, k})_{h^k} = \left(\frac{\pi}{2k}\right)^{n/2}\int_{X} \langle g, f \zeta^k \rangle_{h^k} dv_{X}.  
\]
\end{proposition}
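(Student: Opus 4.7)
The plan is to unfold the definition of $s_{f,k}$, swap the order of integration by Fubini, and reduce the inner integral to the standard reproducing-kernel identity for the Bergman kernel. Starting from
\[
(g, s_{f,k})_{h^k} = \left(\frac{\pi}{2k}\right)^{n/2}\int_{M}\left\langle g(z),\, \int_{X} K_{k}(z,x)\, f(x)\,\zeta^{k}(x)\,dv_{X}(x)\right\rangle_{h^{k}}\frac{\omega^{n}}{n!}(z),
\]
I would use conjugate-linearity in the second slot of $\langle\cdot,\cdot\rangle_{h^{k}}$ to pull out $\overline{f(x)}$, and then exchange integrals. The exchange is legitimate because $M$ and $X$ are compact and $K_{k}$ is smooth, so the double integral converges absolutely. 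This yields
\[
\left(\frac{\pi}{2k}\right)^{n/2}\int_{X}\overline{f(x)}\left(\int_{M}\langle g(z),\, K_{k}(z,x)\,\zeta^{k}(x)\rangle_{h^{k}}\,\frac{\omega^{n}}{n!}(z)\right)dv_{X}(x).
\]

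The next step is to recognize the inner integral as a pointwise evaluation of $g$. Since $K_{k}$ is the Schwartz kernel of the orthogonal projection onto $\Gamma(M,L^{k})\cap L^{2}(M,L^{k})$, its defining property reads: for any $g\in\Gamma(M,L^{k})$, any $x\in M$, and any $v\in L_{x}^{k}$,
\[
\langle g(x), v\rangle_{h^{k}} = \int_{M}\langle g(z),\, K_{k}(z,x)\,v\rangle_{h^{k}}\,\frac{\omega^{n}}{n!}(z).
\]
Applying this with $v = \zeta^{k}(x)$ collapses the inner integral to $\langle g(x), \zeta^{k}(x)\rangle_{h^{k}}$, and another use of conjugate-linearity in the second slot gives $\overline{f(x)}\,\langle g(x),\zeta^{k}(x)\rangle_{h^{k}} = \langle g(x),\,f(x)\zeta^{k}(x)\rangle_{h^{k}}$, which is the right-hand side of the proposition.

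There is no substantive obstacle: the proof is a bookkeeping exercise built on top of the reproducing-kernel identity. The only care needed is to track which slot of $\langle\cdot,\cdot\rangle_{h^{k}}$ is antilinear (so that the factor $f(x)$ lands correctly inside the Hermitian pairing), and to justify the Fubini step, both of which are straightforward in our compact, smooth setting.
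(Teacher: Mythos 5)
Your proposal is correct and is essentially the same argument as the paper's: unfold the definition of $s_{f,k}$, exchange the order of integration over $M$ and $X$ (justified by compactness and smoothness of $K_k$), and collapse the $M$-integral by the reproducing property of the Bergman kernel. The only cosmetic difference is in bookkeeping: you pull $\overline{f(x)}$ out by antilinearity and invoke the reproducing identity in the form $\langle g(x),v\rangle_{h^k}=\int_M \langle g(z),K_k(z,x)v\rangle_{h^k}\,\frac{\omega^n}{n!}(z)$, whereas the paper keeps $f(w)\zeta^k(w)$ in the second slot, uses the Hermitian symmetry $K_k(z,w)^{*}=K_k(w,z)$ to move the kernel to the first slot, and then applies $\int_M K_k(w,z)g(z)\,\frac{\omega^n}{n!}(z)=g(w)$; these two forms of the reproducing property are equivalent, so the proofs are the same in substance.
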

\begin{proof}
We have 
\begin{align*}
(g, s_{f, k})_{h^k} 
= &\left(\frac{\pi}{2k}\right)^{n/2} \int_{M} \langle g(z), \int_{X} K_k(z, w) f(w)\zeta^{k}(w) dv_X(w) \rangle_{h^k} \frac{\omega^n(z)}{n!} \\
= & \left(\frac{\pi}{2k}\right)^{n/2} \int_X \langle  \int_M K_k(w, z) g(z) \frac{\omega^n(z)}{n!}, f(w) \zeta^k(w)\rangle_{h^k} dv_{X}(w)\\ 
= & \left(\frac{\pi}{2k}\right)^{n/2} \int_{X} \langle g(w), f(w)\zeta^k(w) \rangle_{h^k} dv_{X}(w).   
\end{align*}
\end{proof}
We now give the proof of Proposition~\ref{proposition:0}.  
\begin{proof}[proof of Proposition~\ref{proposition:0}]
By Proposition~\ref{proposition:-1}, we have 
\[
\left(\frac{2k}{\pi}\right)^{n/2}(f_k, s_{f, k})_{h^k} = \int_{X} \langle f_{k}, f \zeta^k \rangle_{h^k} dv_{X} \to \int_{X} |f|^2 dv_X \quad (k \to \infty).  
\]
Hence 
\begin{align*}
0 \leq & \liminf_{k \to \infty}\left(\frac{2k}{\pi}\right)^{n/2} \|f_k -s_{f, k}\|^2_{h^k} = \liminf_{k \to \infty}\left(\frac{2k}{\pi}\right)^{n/2}\left(\|f_k\|^2_{h^k} -(s_{f, k}, f_k)_{h^k} - (f_k, s_{f, k})_{h^k} + \|s_{f, k}\|^2_{h^k} \right) \\
= & \liminf_{k \to \infty} \left(\frac{2k}{\pi} \right)^{n/2} \|f_k\|^2_{h^k} - \int_X |f|^2 dv_X.  
\end{align*}
\end{proof}

\section{Monge-Ampere measure on Grauert tubes}\label{section:3}
Let $(M, \omega)$ be a (non-compact) K\"ahler manifold of complex dimension $n$ with a complex structure $J$.   
Let $X \subset M$ be a totally real submanifold of real dimension $n$.  
Let $g_{\omega}(\cdot, \cdot) = \omega(\cdot, J\cdot)$ be the Riemannian metric on $M$.  
Let $g_X$ be the Riemannian metric on $X$ induced by $g_{\omega}$ and $dv_X$ be a volume density on $X$.  
We assume that there exists a non-negative smooth plurisubharmonic function $\rho$ on $M$ such that $\omega = \frac{i}{2}\partial \overline{\partial} \rho$, $\rho^{-1}(0) = X$, and that $\sqrt{\rho}$ satisfies the following Monge-Amp\`ere equation on $M \setminus X$:  
$(i\partial \overline{\partial} \sqrt{\rho})^n = 0$.  
In this case, we call $M$ a Grauert tube, and  
$X$ naturally forms a Lagrangian submanifold of $(M, \omega)$.  
We have $\sqrt{\rho}$ is a continuous plurisubharmonic function on $M$, and 
$(dd^c \sqrt{\rho})^n = (2i\partial \overline{\partial} \sqrt{\rho})^n$ defines a Monge-Amp\`ere measure whose support is contained in $X$.  
See Chapter~III, Section~3 of \cite{Dem} for the definition of the Monge-Amp\`ere measure.  
In this section, we are only concerned with a small neighborhood of the submanifold $X$.   
Hence we may regard $M$ as a tubular neighborhood of $X$.  
By identifying the normal bundle $JTX$ with $TX$, 
we consider $M$ as an open subset of $TX$.  
Then it follows that $\rho(\nu) = 2|\nu|^2$ for $\nu \in TX$ (see e.g., Theorem~3.1 of \cite{Lem-Szo} and the references therein).  
Here $|\nu|$ is the norm of $\nu$ with respect to the Riemannian metric $g_{X}$ on $X$ induced by $g_{\omega}$.  
\begin{proposition}\label{proposition:1}
Assume that $M$ is a Grauert tube.  
As a measure on $X$, we have 
\[
(dd^c \sqrt{\rho})^n = \sqrt{2}^{n} n!\sigma_n dv_{X}, 
\]
where $\sigma_n = \frac{\pi^{n/2}}{\Gamma(\frac{n}{2} + 1)}$ is the volume of an $n$-dimensional unit ball.  
\end{proposition}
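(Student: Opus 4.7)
The problem is local on $X$, so the plan is to fix a point $p \in X$ and work in the smooth coordinates $(x_1, \ldots, x_n, y_1, \ldots, y_n)$ around $p$ introduced in Section~2, choosing the $x$-coordinates so that $\partial/\partial x_j|_p = E_j|_p$. Under the identification of $U$ with a neighborhood of the zero section of $TX$, the Grauert tube hypothesis gives $\rho(x, y) = 2|y|^2$ exactly (with $|y|^2 = y_1^2+\cdots+y_n^2$), because $\{JE_j\}_{(x, 0)}$ is an orthonormal basis of the normal bundle along $X \cap V$. Along $X = \{y = 0\}$, the relations $\partial/\partial y_j|_{(x,0)} = JE_j|_{(x,0)}$ and $J^2 = -\mathrm{id}$ force $J\partial/\partial x_j|_p = \partial/\partial y_j|_p$, so at $p$ the complex structure $J$ coincides with the standard model $J_0$ (defined by $J_0\partial/\partial x_j = \partial/\partial y_j$), differing from $J_0$ by $O(|x|+|y|)$ in a neighborhood; in particular $dv_X = dx_1\wedge\cdots\wedge dx_n$ at $p$.

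Next I would approximate $\sqrt\rho$ by the smooth functions $u_\epsilon := \sqrt{\rho + 2\epsilon^2} = \sqrt{2(|y|^2+\epsilon^2)}$, which are plurisubharmonic by the identity $\partial\bar\partial\sqrt v = \tfrac{\partial\bar\partial v}{2\sqrt v} + \tfrac{\partial v \wedge \bar\partial v}{4 v^{3/2}}$ applied to $v = \rho + 2\epsilon^2 > 0$. Since $u_\epsilon \downarrow \sqrt\rho$ monotonically as $\epsilon \downarrow 0$, the Bedford--Taylor continuity theorem yields $(dd^c u_\epsilon)^n \to (dd^c \sqrt\rho)^n$ weakly. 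A direct calculation of the Hessian in $y$ gives
\[
\frac{\partial^2 u_\epsilon}{\partial y_j\partial y_k} = \frac{\sqrt{2}}{\sqrt{|y|^2+\epsilon^2}}\Bigl(\delta_{jk} - \frac{y_j y_k}{|y|^2+\epsilon^2}\Bigr);
\]
this matrix has one eigenvalue $\epsilon^2/(|y|^2+\epsilon^2)$ (along $y$) and $n-1$ eigenvalues equal to $1$, so its determinant equals $2^{n/2}\epsilon^2/(|y|^2+\epsilon^2)^{(n+2)/2}$.

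Expanding $dd^c u_\epsilon$ in the $(x, y)$ coordinates and using $J = J_0$ at $p$, the leading part is $\sum_{j,k}(u_\epsilon)_{y_jy_k}\,dx_j \wedge dy_k$, with remainder $R_\epsilon$ whose coefficients carry either an extra factor of $|x|+|y|$ (coming from $J - J_0$) or involve only first-order (and hence bounded) derivatives of $u_\epsilon$. Taking the $n$-th wedge power and using the identity $\bigl(\sum a_{jk}\,dx_j\wedge dy_k\bigr)^n = n!\det(a)\,dx_1\wedge dy_1\wedge\cdots\wedge dx_n\wedge dy_n$ for symmetric $a$, the leading-order density is $n!\cdot 2^{n/2}\epsilon^2/(|y|^2+\epsilon^2)^{(n+2)/2}$. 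The substitution $y = \epsilon t$ combined with the elementary evaluation $\int_{\mathbb{R}^n}(1+|t|^2)^{-(n+2)/2}\,dt = \sigma_n$ (via polar coordinates and $r = \tan\theta$, reducing the radial integral to $\int_0^{\pi/2}\sin^{n-1}\theta\cos\theta\,d\theta = 1/n$) shows that the $y$-integral equals $\sqrt{2}^n \sigma_n$, independent of $\epsilon$. Thus the leading density concentrates on $\{y = 0\} = X$ and converges weakly to the measure $\sqrt{2}^n n!\sigma_n\,dv_X$ at $p$.

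The main technical step will be to verify that the correction $R_\epsilon$ contributes nothing in the weak limit. Each such term pairs an extra factor of $|x|+|y|$ with a density of type $\epsilon^2/(|y|^2+\epsilon^2)^{\alpha}$ with $\alpha \leq (n+2)/2$, and the same change of variables $y = \epsilon t$ bounds its integral by $O(\epsilon)$. Hence only the leading density survives, giving $(dd^c \sqrt\rho)^n = \sqrt{2}^n n!\sigma_n\,dv_X$ near $p$; since $p \in X$ was arbitrary, the identity holds on all of $X$.
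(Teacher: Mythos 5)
Your proposal takes a genuinely different route from the paper's: the paper applies the Demailly--Jensen--Lelong formula to an almost-holomorphic extension of a test function, multiplies by $r^{n-1}$ and integrates, then lets the radius tend to zero, exploiting the exact identity $\frac{n}{2\rho}d\rho\wedge d^c\rho\wedge(dd^c\rho)^{n-1}=(dd^c\rho)^n=4^n\omega^n$ off $X$; you instead regularize $\sqrt\rho$ by $u_\epsilon=\sqrt{\rho+2\epsilon^2}$, invoke Bedford--Taylor continuity, and try to compute $(dd^c u_\epsilon)^n$ explicitly in local coordinates. The second route is in principle viable, but as written it has two gaps.

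First, a small one: the identity you cite reads $\partial\bar\partial\sqrt v=\frac{\partial\bar\partial v}{2\sqrt v}-\frac{\partial v\wedge\bar\partial v}{4v^{3/2}}$, with a \emph{minus} sign, so plurisubharmonicity of $u_\epsilon$ does not follow from $\rho$ being psh. It is still true, but the clean argument is that $u_\epsilon=f(\sqrt\rho)$ with $f(t)=\sqrt{t^2+2\epsilon^2}$ convex and non-decreasing, applied to the psh function $\sqrt\rho$.

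Second, and more seriously, the error analysis does not go through. Your leading term $\sum_{j,k}(u_\epsilon)_{y_jy_k}\,dx_j\wedge dy_k$ with $n$-th power $n!\det(\mathrm{Hess}_y\,u_\epsilon)\,dx\,dy$ produces, after passing to the limit, the density $\sqrt2^n n!\sigma_n\,dx_1\cdots dx_n$, \emph{not} $\sqrt2^n n!\sigma_n\,dv_X$. These differ by the factor $\sqrt{\det g_X}$, which equals $1$ at the chosen base point $p$ but not at nearby $(x,0)$; so if your ``remainder $R_\epsilon$'' really contributed nothing, the limit would be the wrong measure. The remainder is in fact responsible for converting $dx$ into $dv_X$, and the assertion that each such term is a factor of $|x|+|y|$ times a density of the type $\epsilon^2/(|y|^2+\epsilon^2)^\alpha$ is not correct: the key $\epsilon^2$ small factor comes from the degenerate eigenvalue of the full Hessian, and in a mixed product $A^{n-1}\wedge R$ one may select the $(n-1)\times(n-1)$ minor that avoids the degenerate direction, leaving a density with no $\epsilon^2$ suppression and whose integral against a fixed test function does not tend to zero. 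The natural fix is to expand $dd^c u_\epsilon$ in the coframe $\{E_h^j,E_v^j\}$ of Section~5 rather than $\{dx_j,dy_j\}$: since $E_j^h u_\epsilon=0$ and $E_v^j\circ J=E_h^j+O(|y|)$, one gets $dd^c u_\epsilon=\sum_{j,k}\bigl(\tfrac{2\delta_{jk}}{u_\epsilon}-\tfrac{4y_jy_k}{u_\epsilon^3}\bigr)E_h^j\wedge E_v^k+\text{(error)}$, and the $n$-th power of the leading term is $n!\det(\,\cdot\,)\,E_h^1\wedge E_v^1\wedge\cdots\wedge E_h^n\wedge E_v^n=(1+O(|y|))\,n!\det(\,\cdot\,)\,dv_X\,dy$, which does converge to the claimed measure; but one then still has to justify that the remaining error really vanishes in the weak limit, and this requires more structure than an $|x|+|y|$ count.

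In short: an interesting alternative strategy, but the coordinate choice hides the $dv_X$ factor, and the dismissal of the correction terms is unjustified as written.
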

For the proof, we need the following Demailly-Jensen-Lelong formula:  
\begin{theorem}[(6.5) of \cite{Dem}]\label{theorem:DJL}
Let $M$ be a Stein manifold and $\phi$ be a continuous plurisubharmonic function on $M$.  
Assume that the sublevel set $B_{\phi}(r) = \{z \in M\, |\,  \phi(z) < r\} \subset M$ is relatively compact for any $r < \sup_{M}\phi$.  
Then 
\[
\int \widetilde{u} d\mu_{r} - \int_{B_{\phi}(r)} \widetilde{u} (dd^c \phi)^n = \int_{-\infty}^{r} dt \int_{B_{\phi}(t)}  dd^c \widetilde{u} \wedge (dd^c \phi)^{n-1} 
\]
for any $\widetilde{u} \in C^{\infty}(M)$.  
Here $d\mu_r$ is a measure whose support is contained in $\{z \in M\, |\, \phi(z) = r\}$.  
If $\phi$ is smooth and $d\phi \neq 0$ on a neighborhood of $\partial B_{\phi}(r)$, 
$d\mu_r$  is equal to the pullback of $d^c \phi \wedge (dd^c\phi)^{n-1}$ by the inclusion map from $\partial B_{\phi}(r)$ to $M$.   
\end{theorem}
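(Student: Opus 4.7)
The plan is to prove the identity first for smooth $\phi$ with $d\phi \neq 0$ on $\partial B_{\phi}(r)$ by two applications of Stokes' theorem, and then extend to continuous psh $\phi$ by approximation. In the smooth regular case the boundary measure $d\mu_{r}$ is prescribed as the pullback of $d^c\phi \wedge (dd^c\phi)^{n-1}$ to $\partial B_{\phi}(r)$, and $B_{\phi}(r)$ is a relatively compact domain with smooth boundary, so classical Stokes applies.

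The first Stokes application is to the form $\widetilde{u}\, d^c\phi \wedge (dd^c\phi)^{n-1}$. Since $(dd^c\phi)^{n}$ is $d$-closed, the exterior differential of this form equals $d\widetilde{u} \wedge d^c\phi \wedge (dd^c\phi)^{n-1} + \widetilde{u}\,(dd^c\phi)^{n}$, giving
\[
\int \widetilde{u}\, d\mu_{r} - \int_{B_{\phi}(r)} \widetilde{u}\,(dd^c\phi)^{n} = \int_{B_{\phi}(r)} d\widetilde{u} \wedge d^c\phi \wedge (dd^c\phi)^{n-1}.
\]
A short bidegree count applied to the real 1-forms $d\widetilde{u}, d^c\phi$ against the $(n-1,n-1)$-form $(dd^c\phi)^{n-1}$ shows the symmetry $d\widetilde{u} \wedge d^c\phi \wedge (dd^c\phi)^{n-1} = d\phi \wedge d^c\widetilde{u} \wedge (dd^c\phi)^{n-1}$. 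The second Stokes application is to $(r-\phi)\,d^c\widetilde{u} \wedge (dd^c\phi)^{n-1}$; since $r-\phi$ vanishes on $\partial B_{\phi}(r)$, the boundary term drops and I obtain
\[
\int_{B_{\phi}(r)} d\phi \wedge d^c\widetilde{u} \wedge (dd^c\phi)^{n-1} = \int_{B_{\phi}(r)} (r-\phi)\, dd^c\widetilde{u} \wedge (dd^c\phi)^{n-1}.
\]
Writing $r - \phi(z) = \int_{\phi(z)}^{r} dt$ and swapping the order of integration by Fubini converts the right-hand side into the stated double integral $\int_{-\infty}^{r} dt \int_{B_{\phi}(t)} dd^c\widetilde{u} \wedge (dd^c\phi)^{n-1}$, which closes the smooth regular case.

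To remove the smoothness hypothesis, I would approximate $\phi$ from above by a decreasing sequence of smooth strictly psh functions $\phi_{\nu} \searrow \phi$, which exists since $M$ is Stein, apply the formula just proved to $\phi_{\nu}$ at levels $r$ that are regular for every $\phi_{\nu}$ (a generic choice by Sard), and let $\nu \to \infty$. Bedford--Taylor continuity gives weak convergence $(dd^c\phi_{\nu})^{k} \to (dd^c\phi)^{k}$ for each $k$, controlling both the Monge--Amp\`ere term and the double integral; the boundary measure $d\mu_{r}$ for $\phi$ is then defined as the weak limit of the corresponding pullback measures, whose support lies in $\{\phi = r\}$ because $\phi_{\nu} \geq \phi$ forces the level sets to concentrate inward. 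Non-regular values of $r$ are obtained from regular ones by right-continuity in $r$ of both sides. The main obstacle is this last passage to the limit: one has to verify that the inner integral $\int_{B_{\phi}(t)} dd^c\widetilde{u} \wedge (dd^c\phi)^{n-1}$ is locally integrable in $t$ and that the weak limit $d\mu_{r}$ genuinely concentrates on $\{\phi = r\}$, both resting on Bedford--Taylor's weak continuity of the Monge--Amp\`ere operator along decreasing sequences of continuous psh functions together with the relative compactness of $B_{\phi}(r)$ provided by the hypothesis $r < \sup_{M}\phi$.
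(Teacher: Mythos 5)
First, a point of comparison: the paper does not prove this statement at all — it is quoted verbatim from (6.5) of \cite{Dem}, the only added content being the remark that the hypothesis that $\widetilde{u}$ be plurisubharmonic can be dropped because a smooth function is locally (and on a Stein manifold globally) a difference of two plurisubharmonic functions. So your proposal has to stand as a self-contained proof. Its core is fine: the two Stokes applications (to $\widetilde{u}\,d^c\phi\wedge(dd^c\phi)^{n-1}$ and to $(r-\phi)\,d^c\widetilde{u}\wedge(dd^c\phi)^{n-1}$), the symmetry $d\widetilde{u}\wedge d^c\phi\wedge T=d\phi\wedge d^c\widetilde{u}\wedge T$ against the $(n-1,n-1)$-form $T=(dd^c\phi)^{n-1}$, and the Fubini step are all correct, and together they give exactly the classical Lelong--Jensen identity in the smooth case with $r$ a regular value.

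The genuine gap is the passage to merely continuous $\phi$, and it is not a dispensable refinement here: in the paper's application $\phi=\sqrt{\rho}$ is not smooth along $X$, and $(dd^c\sqrt{\rho})^n$ is precisely the Bedford--Taylor measure carried by $X$, so the nonsmooth case is the one actually used in Propositions~\ref{proposition:1} and \ref{proposition:2}. Concretely: (a) weak convergence $(dd^c\phi_\nu)^n\to(dd^c\phi)^n$ does not by itself give convergence of integrals over the varying sets $B_{\phi_\nu}(t)$; one needs the limit measures to put no mass on $\{\phi=t\}$, which holds only for all but countably many $t$ --- harmless under the $dt$-integration, but it restricts the level-$r$ terms, and hence your construction of $\mu_r$, to almost every $r$, whereas the statement is for every $r$; (b) the mixed current $dd^c\widetilde{u}\wedge(dd^c\phi_\nu)^{n-1}$ is covered by Bedford--Taylor continuity only after writing $\widetilde{u}$ as a difference of psh functions (exactly the paper's remark), a step you never make; (c) defining $\mu_r$ as ``the'' weak limit of the boundary measures requires proving existence and independence of the approximating sequence, which you only assert. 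Demailly's own argument avoids all three difficulties by defining $\mu_r=(dd^c\max(\phi,r))^n-\mathbf{1}_{\{\phi<r\}}(dd^c\phi)^n$, which makes existence and the support condition immediate and yields the formula for every $r$; if you want a self-contained proof you should either adopt that definition of $\mu_r$, or settle for the almost-every-$r$ version, which is in fact all the paper uses.
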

We note that $\widetilde{u}$ is assumed to be plurisubharmonic in \cite{Dem}.  
However, this assumption is not necessary since any smooth function can be represented as a difference of two plurisubharmonic functions.  
\begin{proof}[proof of Proposition~\ref{proposition:1}]
Let $u \in C^{\infty}(X)$ and let $\widetilde{u} \in C^{\infty}(M)$ be an almost holomorphic extension of $u$, that is, $\widetilde{u} = u$ and $\overline{\partial}\widetilde{u}$ vanishes to infinite order on $X$.   
The Demailly-Jensen-Lelong formula (Theorem~\ref{theorem:DJL}) gives 
\begin{align}\label{equation:1}
\int_{\partial B_{\sqrt{\rho}}(r)} \widetilde{u} \iota^{*}(d^c\sqrt{\rho} \wedge (dd^c \sqrt{\rho})^{n-1}) - \int_{X} u (dd^c \sqrt{\rho})^n= \int_{0}^{r} dt \int_{B_{\sqrt{\rho}}(t)}  dd^c \widetilde{u} \wedge (dd^c \sqrt{\rho})^{n-1}
\end{align}
for almost all $r$, where $B_{\sqrt{\rho}}(r)$ is a sublevel set of $\sqrt{\rho}$ and $\iota: \partial B_{\sqrt{\rho}}(r) \to M$ is the inclusion map .  
Since $dd^c \widetilde{u}(\nu) = O(|\nu|^{m})$ for any $m \in \mathbb{N}$ and $\nu \in TX \cap M$, we have 
\begin{align*}
& \int_0^{r} dt \int_{B_{\sqrt{\rho}}(t)}  dd^c \widetilde{u} \wedge (dd^c \sqrt{\rho})^{n-1} \\
= & \frac{1}{2^{n-1}}\int_0^{r}dt \int_{\|\nu\| \leq t} \frac{1}{\rho^{(n-1)/2}}dd^{c} \widetilde{u} \wedge (dd^c \rho)^{n-2}\wedge\left(dd^c \rho -\frac{n}{2\rho} d\rho \wedge d^c \rho \right) 
 = O(r^{m-n}).  
\end{align*}
Multiplying (\ref{equation:1}) by $r^{n-1}$ 
and integrating with respect to $r$ from $0$ to $\tau>0$, we have 
\begin{align}\label{equation:11}
\int_{B_{\sqrt{\rho}}(\tau)} \widetilde{u} \rho^{(n-1)/2} d\sqrt{\rho} \wedge d^c \sqrt{\rho} \wedge (dd^c \sqrt{\rho})^{n-1}
= \frac{\tau^n}{n} \int_{X} u (dd^c \sqrt{\rho})^{n} + O(\tau^{m}). 
\end{align}
Since $(dd^{c} \sqrt{\rho})^n = 0$ on $M \setminus X$, we have 
$\frac{n}{2\rho} d\rho \wedge d^{c}\rho \wedge (dd^c \rho)^{n-1} = (dd^c \rho)^n = 4^n \omega^n$. 
Then the left hand side of (\ref{equation:11}) is equal to 
\begin{align*}
& \frac{1}{2^{n+1}}\int_{B_{\sqrt{\rho}}(\tau)} \widetilde{u} \frac{1}{\rho} d\rho \wedge d^c \rho \wedge (dd^c \rho)^{n-1}
= \frac{1}{2^{n}n} \int_{B_{\sqrt{\rho}}(\tau)} \widetilde{u} (dd^{c}\rho)^n = 2^n (n-1)! \int_{B_{\sqrt{\rho}}(\tau)} \widetilde{u}\frac{\omega^n}{n!}.  
\end{align*} 
Let $\Omega \subset X$ be an open subset of $X$ such that there exist pointwise orthonormal vector fields $E_1, \ldots, E_n$ on $\Omega$.  
Let $\Pi:TX \to X$ be the projection map.  
We take a smooth local coordinate system $(x, y)$ on $\Pi^{-1}\Omega$ such that $(x, 0)$ corresponds to a point in $\Omega \subset \Pi^{-1}\Omega$, and $(x, y)$ corresponds to $y_1 (E_{1})_{(x, 0)} + \cdots + y_n (E_{n})_{(x, 0)} \in \Pi^{-1}\Omega$.  
Let $dv_Xdy_1 \cdots dy_n$ be a product measure on $\Pi^{-1}\Omega \simeq (X \cap V) \times \mathbb{R}^n$.  
Then $\frac{\omega^n}{n!} = (1 + O(|y|))dv_Xdy_1 \cdots dy_n$ around $\Omega \subset \Pi^{-1}\Omega$.  
Since $\rho(x, y) = 2|y|^2$, we have $B_{\sqrt{\rho}}(\tau) \cap \Pi^{-1}\Omega = \{(x, y) |\, |y| < \tau/\sqrt{2}\}$.    
Hence we obtain 
\[
\int_{B_{\sqrt{\rho}}(\tau)} \widetilde{u}\frac{\omega^n}{n!} = \frac{\tau^n\sigma_n}{2^{n/2}}\int_{X}  udv_X
+o(\tau^{n})
\] 
Taking $\tau \to 0$ completes the proof.  
\end{proof}

\begin{proposition}\label{proposition:2}
Assume that $M$ is a Grauert tube.  
Let $(r_k)_{k \in \mathbb{N}}$ be a sequence of small positive numbers which satisfies $kr_k^2 \to \infty$ as $k \to \infty$.   
Let $f_{k}$ ($k \in \mathbb{N}$) be a holomorphic function on $B_{\sqrt{\rho}}(r_k) = \{z \in M\,|\, \sqrt{\rho}(z) < r_k\}$.   
Then 
\[
\liminf_{k \to \infty} \left(\frac{2ck}{\pi}\right)^{n/2} \int_{B_{\sqrt{\rho}}(r_k)} |f_k|^2 e^{-ck\rho} \frac{\omega^n}{n!} \geq  \liminf_{k \to \infty} \int_{X} |f_k|^2 dv_{X}
\]
for any $c > 0$.  
\end{proposition}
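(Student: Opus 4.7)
The plan is to combine Proposition~\ref{proposition:1} with the Demailly-Jensen-Lelong formula and reduce the claim to an asymptotic Gaussian integral via a radial slicing of $\sqrt{\rho}$.

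I would first apply Theorem~\ref{theorem:DJL} on the Stein manifold $B_{\sqrt{\rho}}(r)$ with $\phi = \sqrt{\rho}$ and $\widetilde{u} = |f_k|^2$; the latter is $C^{\infty}$ since $f_k$ is holomorphic. Because $dd^c|f_k|^2$ is a non-negative $(1,1)$-form and $(dd^c\sqrt{\rho})^{n-1}$ is a positive $(n-1,n-1)$-current, the right hand side of the Demailly-Jensen-Lelong identity is non-negative; using that $(dd^c\sqrt{\rho})^n$ is supported on $X$ together with Proposition~\ref{proposition:1}, this gives
\[
\int|f_k|^2\,d\mu_r \;\geq\; \int_X |f_k|^2(dd^c\sqrt{\rho})^n \;=\; \sqrt{2}^{\,n}\,n!\,\sigma_n\int_X |f_k|^2\,dv_X
\]
for almost every $r\in(0, r_k)$.

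Next, writing $a := \sqrt{\rho}$ and expanding $(dd^c\rho)^n = (2\,da\wedge d^c a + 2a\,dd^c a)^n$ using $(dd^c a)^n = 0$ on $M\setminus X$, the identity $(dd^c\rho)^n = 4^n\omega^n$ yields
\[
\frac{\omega^n}{n!} \;=\; \frac{a^{n-1}}{2^n(n-1)!}\,da\wedge d^c a\wedge(dd^c a)^{n-1}
\]
on $M\setminus X$. Slicing along the level sets of $a$ and using $\iota_r^{*}\bigl(d^c a\wedge(dd^c a)^{n-1}\bigr) = d\mu_r$, I get
\[
\int_{B_{\sqrt{\rho}}(r_k)}|f_k|^2 e^{-ck\rho}\frac{\omega^n}{n!} \;=\; \frac{1}{2^n(n-1)!}\int_0^{r_k}r^{n-1}e^{-ckr^2}\left(\int|f_k|^2\,d\mu_r\right) dr.
\]
Plugging in the lower bound from the first step, substituting $u = ckr^2$, and using $kr_k^2\to\infty$ to approximate $\int_0^{ckr_k^2}u^{(n-2)/2}e^{-u}\,du$ by $\Gamma(n/2)$, I obtain
\[
\left(\frac{2ck}{\pi}\right)^{n/2}\int_{B_{\sqrt{\rho}}(r_k)}|f_k|^2 e^{-ck\rho}\frac{\omega^n}{n!} \;\geq\; \bigl(1-o(1)\bigr)\,\frac{n\sigma_n\Gamma(n/2)}{2\pi^{n/2}}\int_X|f_k|^2\,dv_X.
\]
The identity $\sigma_n = \pi^{n/2}/\Gamma(n/2+1)$ collapses the numerical constant to $1$, and passing to $\liminf$ finishes the proof.

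The main obstacle I expect is the first step: Theorem~\ref{theorem:DJL} is stated for $\widetilde{u}\in C^{\infty}(M)$, whereas $|f_k|^2$ is only defined on the sub-tube $B_{\sqrt{\rho}}(r_k)$. This is handled by applying Theorem~\ref{theorem:DJL} on the Stein manifold $B_{\sqrt{\rho}}(r)$ for every $r<r_k$, which is relatively compact in $B_{\sqrt{\rho}}(r_k)$. The slicing step has to be read on $M\setminus X$, where the coarea formula is standard; since $X$ has Lebesgue measure zero and the integrands are absolutely integrable (the apparent $(dd^c\sqrt{\rho})^{n-1}\sim r^{-(n-1)/2}$ behaviour being absorbed by the Jacobian factor $r^{n-1}$ in the radial integration), the identity extends to the whole ball.
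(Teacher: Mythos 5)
Your argument is correct and follows essentially the same route as the paper's proof: apply Demailly--Jensen--Lelong with $\phi=\sqrt{\rho}$, $\widetilde{u}=|f_k|^2$ together with Proposition~\ref{proposition:1} to bound $\int|f_k|^2\,d\mu_r$ from below, then integrate in $r$ against $r^{n-1}e^{-ckr^2}$ and use the identity $\frac{n}{2\rho}\,d\rho\wedge d^c\rho\wedge(dd^c\rho)^{n-1}=4^n\omega^n$ on $M\setminus X$ to identify the radial integral with the weighted volume integral, closing with the incomplete Gamma asymptotic as $kr_k^2\to\infty$. The additional observations you make (that $|f_k|^2$ is plurisubharmonic so the right-hand side of the DJL identity is nonnegative, and that the formula should be applied on the Stein domain $B_{\sqrt{\rho}}(r_k)$) are exactly what makes the paper's terse invocation of DJL rigorous.
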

\begin{proof}
By Demailly-Jensen-Lelong formula (Theorem~\ref{theorem:DJL}), 
we have 
\[
\int_{\partial B_{\sqrt{\rho}}(r)} |f_k|^2 \iota^{*}(d^{c} \sqrt{\rho} \wedge (dd^c \sqrt{\rho})^{n-1}) \geq \int_{X} |f_k|^2 (dd^c \sqrt{\rho})^n  
\]
for almost all $r$.  
Multiplying by $r^{n-1}e^{-ckr^2}$ and integrating with respect to $r$ from $0$ to $r_k$, we obtain 
\begin{align}\label{equation:2}
\int_{B_{\sqrt{\rho}}(r_k)} |f_k|^2 \rho^{(n-1)/2}e^{-ck\rho}  d\sqrt{\rho} \wedge d^c \sqrt{\rho} \wedge (dd^c \sqrt{\rho})^{n-1} \geq   
\int_0^{r_k} r^{n-1}e^{-ckr^2}dr \int_{X} |f_k|^2 (dd^c \sqrt{\rho})^n 
\end{align}
Then 
\[
\int_0^{r_k} r^{n-1}e^{-ckr^2}dr = \frac{1}{2(ck)^{n/2}}\left(\Gamma \left(\frac{n}{2}\right) - \int_{ckr_{k}^2}^{\infty} t^{\frac{n}{2}-1}e^{-t}dt\right), 
\]
and Proposition~\ref{proposition:1} gives that the $\liminf_{k \to \infty}$ of the right hand side of (\ref{equation:2}) multiplied by $(ck)^{n/2}$ is equal to  
$\liminf_{k \to \infty} 2^{n/2}(n-1)! \pi^{n/2} \int_{X} |f_k|^2 dv_{X}$.  
As in the proof of Proposition~\ref{proposition:1}, the left hand side of (\ref{equation:2}) is equal to 
$2^n(n-1)!\int_{B_{\sqrt{\rho}}(r_k)} |f_k|^2e^{-ck\rho} \frac{\omega^n}{n!}$ and this completes the proof.  
\end{proof}

\section{Micro-local estimate of holomorphic sections}\label{section:4}
We now return to the case where $M$ is not necessarily a Grauert tube.  
Let $(M, \omega)$ be a K\"ahler manifold of complex dimension $n$.   
Let $(L, \nabla, h)$, $X$, $s \in C^{\infty}(U, L)$ be as in Section~\ref{section:2}.   
Let $d_{X}(z)$ be the distance from $z \in M$ to $X$ with respect $g_{\omega}$. 
Our first main theorem is the following:  
\begin{theorem}\label{theorem:2} 
Let $(r_k)_{k \in \mathbb{N}}$ be a sequence of small positive numbers such that 
$
\lim_{k \to \infty} kr_k^2 = \infty.  
$ 
Let $V_k = \{z \in M\, |\, d_{X}(z) < r_k\}$, and let $f_k \in \Gamma(V_k, L^k)$.  
Then 
\[
\liminf_{k \to \infty} \left(\frac{2k}{\pi}\right)^{n/2} \|f_k\|^2_{V_{k}, h^k} \geq \liminf_{k \to \infty}\int_{X} |f_k|_{h^k}^2 dv_X.  
\]
If $\frac{f_k}{s^{k}}$ converges to $f \in L^2(X, dv_X)$ on $X$ in the $L^2$-norm, we have 
\[
\liminf_{k \to \infty} \left(\frac{2k}{\pi}\right)^{n/2} \|f_k\|^2_{V_{k}, h^k} \geq \int_{X} |f|^2 dv_X.  
\]
\end{theorem}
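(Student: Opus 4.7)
The natural plan is to reduce to the Grauert-tube case treated in Proposition~\ref{proposition:2} via a local analysis near $X$. Since $X$ is compact, cover it by finitely many adapted coordinate charts $V_j$ as in Section~\ref{section:2}, with coordinates $(x,y)$ such that $X\cap V_j=\{y=0\}$ and $\varphi(x,y)=2|y|^2+O(|y|^3)$. Fix a smooth partition of unity $\{\chi_j^2\}$ on $X$ subordinate to $\{X\cap V_j\}$, and extend each $\chi_j$ to $V_j$ by $\chi_j(x,y):=\chi_j(x,0)$. For $k$ large, $V_k\subset\bigcup_j V_j$ and $\sum_j\chi_j^2\equiv 1$ on $V_k$, so both sides of the claimed inequality split additively. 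As the partition is finite, it suffices to establish, for each $j$ and each $\varepsilon>0$, a local estimate
\[
\left(\frac{2k}{\pi}\right)^{n/2}\int_{V_k\cap V_j}\chi_j^2\,|f_k|^2_{h^k}\,\frac{\omega^n}{n!}\ \geq\ (1-\varepsilon)\int_{X\cap V_j}\chi_j^2\,|f_k|^2_{h^k}\,dv_X-\eta_k,
\]
with $\eta_k\to 0$. Summing over $j$, letting $\varepsilon\to 0$, and taking $\liminf$ then yields the first claim of the theorem.

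For the local estimate I would exploit the adapted coordinates to linearize the geometry. The expansions $\varphi(x,y)=2|y|^2+O(|y|^3)$ and $\omega^n/n!=(1+O(|y|))\,dv_X\,dy_1\cdots dy_n$ are valid on $V_j$, and since the weighted integrand concentrates in a shell of width $\sim k^{-1/2}$, for $k$ large these error terms can be absorbed into multiplicative factors $1\pm\varepsilon$. Identifying $V_j$ via $(x,y)\mapsto y^i E_i|_x$ with a neighborhood of the zero section of $TX$, the Riemannian data on $X$ determine an adapted Grauert-tube K\"ahler structure on $TX$ whose Monge-Amp\`ere potential is $\rho(x,y)=2|y|^2$. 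In principle, Proposition~\ref{proposition:2} on this model with $c=1+\varepsilon$ would yield the desired bound.

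The main obstacle is that $f_k$ is holomorphic in the complex structure of $M$, which need not agree with the adapted Grauert-tube complex structure on $TX$, so Proposition~\ref{proposition:2} cannot be applied as a black box to $f_k$. I would instead reprove Proposition~\ref{proposition:2} directly in our setting, noting that its proof uses only (a) the plurisubharmonicity of $|f_k|^2$, which in the present setup is replaced by the plurisubharmonicity of $|g_k|^2=|f_k/s_0^k|^2$ in the complex structure of $M$, and (b) the Monge-Amp\`ere mass formula $(dd^c\sqrt\rho)^n=\sqrt{2}^{\,n}n!\sigma_n\,dv_X$ from Proposition~\ref{proposition:1}. For (b) I would replace $\sqrt\rho$ by $\sqrt\varphi$: the leading part $\sqrt 2|y|$ is classically plurisubharmonic, and the correction from $\varphi-2|y|^2=O(|y|^3)$ does not destroy positivity of $i\partial\bar\partial\sqrt\varphi$ for small $|y|$; using the Demailly-Jensen-Lelong formula as in the proof of Proposition~\ref{proposition:1} (which already tolerates $O(|y|^m)$ errors from almost-holomorphic test functions), one checks that the Monge-Amp\`ere mass of $\sqrt\varphi$ on $X\cap V_j$ coincides with that of $\sqrt\rho$ on the Grauert-tube model. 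Applying Demailly-Jensen-Lelong to $|g_k|^2$ on the sublevel sets $B_{\sqrt\varphi}(r)$, integrating against $r^{n-1}e^{-ckr^2}\,dr$, and letting $\varepsilon\to 0$ produces the local estimate above.

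Finally, the second conclusion follows at once from the first: $L^2$-convergence of $f_k/s^k$ to $f$ on $X$ gives $\lim_k\int_X|f_k|^2_{h^k}\,dv_X=\int_X|f|^2\,dv_X$, so the liminf on the right of the first inequality specializes to $\int_X|f|^2\,dv_X$.
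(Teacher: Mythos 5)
You correctly identify the central difficulty: $f_k$ is holomorphic for the complex structure $J$ of $M$, while the Grauert--tube exhaustion built from $g_X$ lives in the adapted complex structure on $TX$, so Proposition~\ref{proposition:2} does not apply to $f_k$ directly. However, the workaround you propose, namely replacing $\sqrt{\rho}$ by $\sqrt{\varphi}$ and running Demailly--Jensen--Lelong with the original $J$, has a genuine gap: there is no reason for $\sqrt{\varphi}$ to be plurisubharmonic near $X$, nor for $(dd^c\sqrt{\varphi})^n$ to be supported on $X$ (or even to have small mass off $X$). The expansion $\varphi=2|y|^2+O(|y|^3)$ gives $\sqrt{\varphi}=\sqrt{2}\,|y|+O(|y|^2)$; the leading term $\sqrt{2}\,|y|$ has complex Hessian with eigenvalues of size $\sim 1/|y|$ in $n-1$ directions and one zero eigenvalue along the gradient, while the $O(|y|^2)$ correction contributes an $O(1)$ perturbation of the Hessian. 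That perturbation is negligible against the $1/|y|$ eigenvalues, but it can make the degenerate eigenvalue strictly negative arbitrarily close to $X$, so $i\partial\overline{\partial}\sqrt{\varphi}\geq 0$ can fail, and Theorem~\ref{theorem:DJL} then does not yield the one-sided inequality you need. The statement that the $O(|y|^3)$ correction ``does not destroy positivity'' and that the Monge--Amp\`ere mass of $\sqrt{\varphi}$ on $X$ matches that of $\sqrt{\rho}$ with no mass elsewhere is precisely the content of the Guillemin--Stenzel theorem, which is only available for \emph{real-analytic} Riemannian data after a subtle change of potential, not for an arbitrary smooth $\varphi$ with the given jet on $X$.

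The paper deals with this by a different mechanism. One first replaces $X\subset M$ by a real-analytic model $Y\subset N$ (via Whitney and Bruhat--Whitney), with a diffeomorphism $\Phi:N\to M$ that is almost holomorphic along $Y$. The function $u_k\circ\Phi$ (where $f_k=u_k s^k$) is then only approximately holomorphic, and H\"ormander's $L^2$-estimate on a shrinking tube $W_k$ corrects it, at a cost of $O(r_k^m)\|f_k\|^2_{V_k,h^k}$, to a genuinely holomorphic $v_k$ (Lemmas~\ref{lemma:1} and~\ref{lemma:2}). The real-analytic setting on $N$ then makes Guillemin--Stenzel available, producing an honest Grauert-tube potential $\rho$ to which Proposition~\ref{proposition:2} applies, with $\rho$ and the transported $\varphi\circ\Phi$ compared to within $1\pm c\varepsilon$; one lets $\varepsilon\to 0$ afterwards. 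Your proposal omits both the real-analytification/H\"ormander step and the genuine Monge--Amp\`ere potential it enables, and these cannot be finessed away. As a smaller point, the partition-of-unity localization is unnecessary; the DJL-based argument already runs globally on the tube.
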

As the proof of Theorem~\ref{theorem:2} is very similar to that of Theorem~1 of \cite{Tib}, 
we only provide references for the proofs of the lemmas in what follows.  
By replacing $r_k$ with $\min\{r_k, \frac{\log k}{\sqrt{k}}\}$, we may assume that $(r_k)_{k \in \mathbb{N}}$ satisfies $\lim_{k \to \infty} kr_k^2 = \infty$ and $\lim_{k \to \infty} kr_k^3 = 0$.  
Whitney's theorem (Theorem~1 of \cite{Whi2}) gives that there exists a real analytic manifold $Y$ which is diffeomorphic to $X$.  
By a theorem of Bruhat and Whitney (\cite{Whi-Bru}, \cite{Gra}), there exists a Stein manifold $N$ of complex dimension $n$, which contains $Y$ as a real analytic and totally real submanifold.   
Taking almost holomorphic extension of a diffeomorphism from $Y$ to $X$, 
there exists a diffeomorphism $\Phi$ from a neighborhood of $Y$ to a neighborhood of $X$ such that $\Phi(Y) = X$ and $\overline{\partial} \Phi$ vanishes to any order on $Y$ (cf. Proposition~5.55 of \cite{Cie-Eli}).  
We may assume that $\Phi$ is a diffeomorphism from $N$ to $M$ by shrinking $N$ and $M$.  
Let $\varphi = -\log |s|^2_{h^k}$ and let 
$\omega' = \frac{dd^c \varphi \circ \Phi}{4}$.  
Let $d_{Y}(z)$ be the distance from $z \in N$ to $Y$ with respect to the Riemannian metric induced by $\omega'$.  
There exists $C > 1$ such that $\frac{1}{C}d_{X}(\Phi(z)) \leq d_{Y}(z) \leq C d_{X}(\Phi(z))$.  
Put $W_k = \{z \in N\, |\, d_{Y}(z) < \frac{r_k}{2C}\}$.  
Let $u_k \in C^{\infty}(V_k)$ such that $f_k = u_k s^k$. 
Then 
\begin{lemma}\label{lemma:1}
For any $m \in \mathbb{N}$, we have 
\[
\int_{W_k} |\overline{\partial}(u_k \circ \Phi)|^2_{\omega'} e^{-k\varphi \circ \Phi} \frac{(\omega')^n}{n!} = O(r_{k}^m) \|f_{k}\|^2_{V_{k}, h^k}.  
\]
\end{lemma}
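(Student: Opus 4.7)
The plan is to extract arbitrarily many powers of $r_k$ from $\bar\partial(u_k\circ\Phi)$ on $W_k$, by exploiting simultaneously the almost-holomorphy of $\Phi$ along $Y$ and the almost-holomorphy of $s$ along $X$ combined with the hypothesis $f_k=u_k s^k\in\Gamma(V_k,L^k)$.

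First I would unfold $\bar\partial(u_k\circ\Phi)$ by the chain rule. In local complex coordinates,
\[
\bar\partial(u_k\circ\Phi)=\sum_i(\partial_{z_i}u_k)\circ\Phi\cdot\bar\partial\Phi_i+\sum_i(\partial_{\bar z_i}u_k)\circ\Phi\cdot\bar\partial\overline{\Phi}_i.
\]
Since $\Phi$ is an almost-holomorphic extension of a diffeomorphism $Y\to X$, $\bar\partial\Phi_i$ vanishes to infinite order on $Y$, so $|\bar\partial\Phi|\lesssim d_Y^{\,m}\lesssim r_k^{\,m}$ on $W_k$ for every $m$. For the $\bar\partial u_k$ term, writing $s=\xi s_0$ with $s_0$ holomorphic and $\bar\partial\xi$ vanishing to infinite order on $X$, the holomorphicity of $f_k$ forces $\bar\partial u_k=-ku_k(\bar\partial\xi/\xi)$, hence $|\bar\partial u_k|\lesssim k\,d_X^{\,m}|u_k|\lesssim k\,r_k^{\,m}|u_k|$ on $\Phi(W_k)$. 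Squaring and using the identity $|u_k|^2 e^{-k\varphi}=|f_k|_{h^k}^{\,2}$, this piece already integrates to $O(k^{2}r_k^{\,2m})\|f_k\|^2_{V_k,h^k}$.

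Second, the remaining term contributes $r_k^{\,2m}(|\partial u_k|^2\circ\Phi)e^{-k\varphi\circ\Phi}$, so I need an $L^2$ derivative estimate for the holomorphic section $f_k$. Applying the sub-mean-value inequality of Lemma~15.1.8 of \cite{Hor} on balls of radius $1/\sqrt{k}$, exactly as in the proof of Proposition~\ref{proposition:-2}, and using that $\varphi$ vanishes to order two on $X$ so that $e^{-k\varphi}$ is essentially constant on such balls, one obtains the pointwise bound
\[
|\partial u_k(z)|^2 e^{-k\varphi(z)}\lesssim k^{\,n+1}\int_{B(z,\,1/\sqrt{k})}|f_k|^2_{h^k}\frac{\omega^n}{n!}.
\]
A standard covering/Fubini argument then gives
\[
\int_{\Phi(W_k)}|\partial u_k|^2 e^{-k\varphi}\frac{\omega^n}{n!}\lesssim k\,\|f_k\|^2_{V_k,h^k}.
\]
After pulling back under $\Phi$ (whose real Jacobian is bounded and whose pullback of $\omega^n/n!$ agrees with $(\omega')^n/n!$ up to $O(r_k^{\infty})$ on $W_k$), this piece is bounded by $r_k^{\,2m}\cdot k\,\|f_k\|^2_{V_k,h^k}$. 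Summing both contributions gives $(kr_k^{\,2m}+k^{2}r_k^{\,2m})\|f_k\|^2_{V_k,h^k}$, and the reduction $kr_k^3\to 0$ allows one to trade each factor of $k$ for powers of $r_k$, yielding $O(r_k^{\,m'})\|f_k\|^2_{V_k,h^k}$ for any prescribed $m'$ by choosing $m$ large.

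The main obstacle is the derivative estimate in the second step: the sub-mean-value bound converts $|\partial u_k|^2 e^{-k\varphi}$ into an $L^2$ average only if $e^{-k\varphi}$ is uniformly controlled inside balls of radius $1/\sqrt{k}$, which relies on $\nabla s=0$ and $d\log|s|^2_h=0$ on $X$ together with $\varphi(x,y)=2|y|^2+O(|y|^3)$. One must also track the differentiation carefully so that only one power of $k$ is paid (not two), so that the residual $k$ can be absorbed by $r_k^{\,2m}$ via $kr_k^3\to 0$. Otherwise the argument parallels that of Theorem~1 of \cite{Tib}, to which I would refer for the combinatorial details.
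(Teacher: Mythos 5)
Your overall structure is sound and close in spirit to the intended argument: decompose $\overline{\partial}(u_k\circ\Phi)$ by the chain rule, control the $\overline{\partial}\Phi_i$ factors by the almost-holomorphy of $\Phi$ along $Y$, control $\overline{\partial} u_k = -ku_k\overline{\partial}\xi/\xi$ by the almost-holomorphy of $\xi$ along $X$, and absorb any resulting fixed power of $k$ via $kr_k^3\to 0$ by taking the order of vanishing $m$ large. Those ideas are all correct.

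The step that does not go through as written is the pointwise bound
\[
|\partial u_k(z)|^2 e^{-k\varphi(z)}\lesssim k^{n+1}\int_{B(z,1/\sqrt{k})}|f_k|^2_{h^k}\frac{\omega^n}{n!}.
\]
You justify it by claiming ``$e^{-k\varphi}$ is essentially constant on such balls.'' That is false: on $W_k$ the gradient of $\varphi$ (and of $\varphi_0$) is of size $O(1)$, so on a ball of radius $1/\sqrt{k}$ the quantity $k\varphi$ varies by an amount of order $\sqrt{k}\,\sup|\nabla\varphi_0|$, or, using $|\nabla\varphi|\lesssim d_X$, by $\sqrt{k}\,r_k$ near $\partial W_k$ --- and $\sqrt{k}\,r_k\to\infty$ under the standing hypothesis $kr_k^2\to\infty$. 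In fact the proof of Proposition~\ref{proposition:-2}, to which you appeal, uses balls of radius $1/k$, not $1/\sqrt{k}$; only at that scale is $k\varphi_0$ essentially constant, which is exactly why the $k^{2n}$ prefactor (rather than $k^n$) appears there. There is also a smaller imprecision: $u_k$ and $\partial u_k$ are not holomorphic, so neither a mean-value inequality nor a Cauchy estimate applies to them directly; one must pass to the genuinely holomorphic function $u_k\xi^k$ (or the section $f_k$ in a holomorphic trivialization) and then convert back, which introduces additional $k$-factors from $\partial\xi^{\pm k}$.

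Both defects are repairable within your own framework: apply a Cauchy-type estimate to the holomorphic function $\partial(u_k\xi^k)$ on balls of radius $1/k$, use the $C^1$-bound on $\varphi_0$ to compare the weight as in Proposition~\ref{proposition:-2}, and accept a bound of the shape $\int_{\Phi(W_k)}|\partial u_k|^2 e^{-k\varphi}\,\omega^n/n!\lesssim k^{a}\|f_k\|^2_{V_k,h^k}$ with $a$ somewhat larger than $1$; alternatively, an integration-by-parts (Bochner) argument in the weight $e^{-k\varphi_0}$ gives a similar bound without any mean-value inequality. Since your final step already exchanges fixed powers of $k$ for powers of $r_k$, the larger exponent is harmless and the conclusion of the lemma is recovered.
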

For the proof we refer to the proof of Lemma~1 of \cite{Tib}.  
In Lemma~1 of \cite{Tib}, there exists the additional assumption such that $\|f_{k}\|^2_{h^k} = 1$.  
However this assumption has very little effect on the proof.  
Therefore, the proof can be carried out in exactly the same manner as in Lemma~1 of \cite{Tib}.  

By regarding $u_k \circ \Phi$ as a section of $T^{(n, 0)} N|_{W_k}$-valued $(n, 0)$-form, 
H\"{o}rmander $L^2$-estimate (cf. \!\!\!Chapter~VIII, Theorem~6.1 of \cite{Dem}, Corollary~5.3 of \cite{Dem2}) shows that there exists $\tilde{u}_k \in C^{\infty}(W_k)$ such that $\overline{\partial} \tilde{u}_k = \overline{\partial} (u_k \circ \Phi)$ and that 
\begin{align}\label{equation:a}
\int_{W_k} |\tilde{u}_k|^2 e^{-k \varphi \circ \Phi}\frac{(\omega')^n}{n!} = O(r_{k}^m) \|f_{k}\|^2_{V_k}
\end{align} 
for any $m \in \mathbb{N}$.  
Put $v_k = u_k \circ \Phi - \tilde{u}_k$.  
Then $v_k$ is a holomorphic function on $W_k$.   
\begin{lemma}\label{lemma:2}
For any $m \in \mathbb{N}$, we have 
\[
|\tilde{u}_k|^2 = O(r_{k}^m) \|f_k\|^2_{V_k, h^k}
\]
uniformly on Y.  
\end{lemma}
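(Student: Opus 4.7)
The plan is to convert the weighted $L^2$ estimate (\ref{equation:a}) for $\tilde{u}_k$ into a pointwise estimate on $Y$ by means of a weighted sub-mean value inequality of H\"ormander type (Lemma~15.1.8 of \cite{Hor}, which was already used in the proof of Proposition~\ref{proposition:-2}).  Fix $p \in Y$ and work in a Euclidean ball $B(p, 1/k)$ in a coordinate chart on $N$.  Since $\varphi \circ \Phi$ vanishes to order two on $Y$, the quantity $k \varphi \circ \Phi$ is uniformly bounded on $B(p, 1/k)$, so $e^{-k\varphi \circ \Phi(p)}$ (which equals $1$) is comparable to $e^{-k\varphi \circ \Phi(z)}$ throughout the ball.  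Applying the sub-mean value inequality to $\tilde{u}_k$ with weight $k\varphi \circ \Phi$ yields
\[
|\tilde{u}_k(p)|^2 \lesssim \frac{1}{k^2} \sup_{B(p, 1/k)} |\overline{\partial}\tilde{u}_k|^2_{\omega'} + k^{2n} \int_{B(p, 1/k)} |\tilde{u}_k|^2 e^{-k\varphi \circ \Phi} \frac{(\omega')^n}{n!}.
\]

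The integral term is controlled directly by (\ref{equation:a}): it is at most $k^{2n} O(r_k^m) \|f_k\|^2_{V_k, h^k}$, and since the exponent $m$ in (\ref{equation:a}) is arbitrary, this term is $O(r_k^{m'}) \|f_k\|^2_{V_k, h^k}$ for any fixed $m' \in \mathbb{N}$.

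The main obstacle is the pointwise supremum $\sup_{B(p, 1/k)} |\overline{\partial}\tilde{u}_k|^2 = \sup_{B(p, 1/k)} |\overline{\partial}(u_k \circ \Phi)|^2$.  Expanding this via the chain rule produces terms of two kinds: those with the factor $\overline{\partial}\Phi$, which vanishes to any order on $Y$ and is thus $O(k^{-m})$ on the ball for every $m$; and those with the factor $(\overline{\partial} u_k) \circ \Phi$, where $\overline{\partial} u_k = -k u_k \overline{\partial} s / s$ follows from holomorphy of $f_k = u_k s^k$, with $\overline{\partial} s$ again vanishing to any order on $X$.  In both cases the rapidly decaying factor is multiplied by $u_k$ or one of its first derivatives; these are bounded in terms of $\sup_B |f_k|^2_{h^k}$ via the identity $|u_k|^2 = |f_k|^2_{h^k} e^{k\varphi}$ (with $e^{k\varphi} = O(1)$ on our small ball near $X$) together with the standard sub-mean value estimate $\sup_{B}|f_k|^2_{h^k} \lesssim k^n \|f_k\|^2_{V_k, h^k}$ for the holomorphic section $f_k$, which is valid because $kr_k^2 \to \infty$ places $B(p, 1/k)$ well inside $V_k$.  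First derivatives $\partial u_k$ are handled by Cauchy-type estimates on a slightly larger ball, at the cost of an extra polynomial factor in $k$ that is absorbed by the $O(k^{-m})$ decay from $\overline{\partial}\Phi$ and $\overline{\partial} s$.  Combining yields $\sup_{B(p, 1/k)} |\overline{\partial}\tilde{u}_k|^2 = O(r_k^m) \|f_k\|^2_{V_k, h^k}$ for every $m$, which together with the integral estimate proves the lemma.  This pointwise step parallels the corresponding estimate in the proof of Lemma~2 of \cite{Tib}.
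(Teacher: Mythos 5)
Your proposal is correct and takes essentially the same route as the paper (which defers to Lemmas~2 and 3 of \cite{Tib}): H\"ormander's weighted sub-mean value inequality on balls $B(p,1/k)$, with the $L^2$ term controlled by (\ref{equation:a}) and the $\sup|\overline\partial\tilde u_k|$ term controlled via the infinite-order vanishing of $\overline\partial\Phi$ on $Y$ and of $\overline\partial s$ on $X$, absorbing the resulting polynomial factors in $k$ into $O(r_k^m)$. The only small point worth making explicit is that Cauchy estimates should be applied to the honestly holomorphic function $f_k/s_0^k = u_k\xi^k$ rather than to $u_k$ itself (which is only almost holomorphic), from which the needed bound on $\partial u_k$ follows after dividing by $\xi^k$.
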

For the proof we refer to the proof of Lemma~2 and Lemma~3 of \cite{Tib}.  

Let $\varepsilon > 0$ be any positive number.  
There exists a real analytic function $\varphi_{\epsilon}$ on a neighborhood of $Y$ which satisfies 
$|\varphi_{\varepsilon} - \varphi \circ \Phi|_{C^2} < \varepsilon$ (see Lemma~5 of \cite{Whi}).  
By a theorem of Guillemin and Stenzel~\cite{Gui-Ste}, 
there exists a real-analytic strictly plurisubharmonic function $\rho$ on a neighborhood of $Y$ which satisfies the following three conditions: 
$(i)$ $0 \leq \rho \leq 1$ and $\rho^{-1}(0) = Y$.  
$(ii)$ The Riemannian metric on Y induced by $\frac{dd^c \rho}{4}$ is equal to that induced by $\frac{dd^c \varphi_{\varepsilon}}{4}$.  
$(iii)$ $(dd^c \sqrt{\rho})^n = 0$ outside $Y$.  
We may assume $\varphi_{\varepsilon}$ and $\rho$ are defined on $N$ by shrinking $N$ if necessary.  
Hence $N$ is a Grauert tube.  
Put $\omega_{\rho} = \frac{dd^c \rho}{4}$.  
Let $dv_{Y, \rho}$, $dv_{Y, \varphi \circ \Phi}$ be Riemannian densities on $Y$ induced by $\omega_{\rho}, \omega'$ respectively.  
By shrinking $N$ further if necessary, there exists a constant $c > 0$ which does not depend on $\varepsilon$ and satisfies 
\[
(1-c\varepsilon) \varphi \circ \Phi \leq \rho \leq (1+c \varepsilon) \varphi \circ \Phi, \quad 
(1-c\varepsilon) (\omega')^n \leq (\omega_{\rho})^n \leq (1 + c \varepsilon) (\omega')^n, 
\]
\[
(1-c\varepsilon) dv_{Y, \varphi \circ \Phi} \leq dv_{Y, \rho} \leq (1+c\varepsilon) dv_{Y, \varphi \circ \Phi}
\]  
on $N$ 
(see the first part of Section~4 of \cite{Tib}).  
By Proposition~3, we have 
\begin{align*}
& \liminf_{k \to \infty} (1+c\varepsilon)\left(\frac{2k}{(1-c\varepsilon)\pi}\right)^{n/2} \int_{W_{k}} |v_{k}|^2 e^{-k \varphi \circ \Phi} \frac{(\omega')^n}{n!} \\
\geq &  
\liminf_{k \to \infty} \left(\frac{2k}{(1-c\varepsilon)\pi}\right)^{n/2} \int_{W_{k}} |v_{k}|^2 e^{-k \frac{\rho}{1-c\varepsilon}} \frac{(\omega_{\rho})^n}{n!} \\
\geq & \liminf_{k \to \infty} \int_{Y} |v_{k}|^2 dv_{Y, \rho} \\
\geq & 
\liminf_{k \to \infty} (1-c\varepsilon) \int_{Y} |v_k|^2 dv_{Y, \varphi \circ \Phi}.  
\end{align*}
Taking $\varepsilon \to 0$, we have 
\[
\liminf_{k \to \infty} \left(\frac{2k}{\pi} \right)^{n/2} \int_{W_k} |v_{k}|^2 e^{-k\varphi \circ \Phi} \frac{(\omega')^n}{n!} \geq \liminf_{k \to \infty} \int_{Y} |v_k|^2 dv_{Y, \varphi \circ \Phi}.  
\]
Since $\Phi^{-1}$ is almost holomorphic on $X$, we have 
$dv_{X} = (\Phi^{-1}|_{X})^{*}dv_{Y, \varphi \circ \Phi}$ on $X$ and $(\Phi^{-1})^{*} \omega' = (1+ O(r_{k}^m)) \omega$ on $\Phi(W_k)$ for any $m \in \mathbb{N}$.   
For any $\varepsilon > 0$, 
(\ref{equation:a}) and Lemma~\ref{lemma:2} show 
\begin{align*}
& \|f_{k}\|_{V_k, h^k}^2 \geq \int_{\Phi(W_k)} |u_{k}|^2 e^{-k\varphi} \frac{\omega^n}{n!}
\geq (1-\varepsilon) \int_{W_k}|u_k \circ \Phi|^2 e^{-k\varphi \circ \Phi} \frac{(\omega')^n}{n!}\\ 
\geq & (1-\varepsilon)\left( \frac{1}{1+\varepsilon}\int_{W_k} |v_{k}|^2 e^{-k \varphi \circ \Phi} \frac{(\omega')^n}{n!} 
- \frac{O(r_{k}^m)}{\varepsilon}\|f_{k}\|_{V_k, h^k}^2\right) 
\end{align*}
for sufficiently large $k$.  
Here we used $|c+d|^2 \geq \frac{1}{1 + \varepsilon} |c|^2 - \frac{1}{\varepsilon} |d|^2$ for $c, d \in \mathbb{C}$.  
We also have 
\begin{align*}
\int_X |f_k|^2_{h^k} dv_X = \int_{X} |u_k|^2 dv_{X} \leq (1+\varepsilon)\int_{Y} |v_{k}|^2 dv_{Y, \varphi \circ \Phi}  + O(r_{k}^m) \left(1+\frac{1}{\varepsilon}\right)\|f_{k}\|^2_{V_k, h^k}.  
\end{align*}
Hence 
\[
\liminf_{k \to \infty} \left(\frac{2k}{\pi}\right)^{n/2} \|f_{k}\|^2_{V_k, h^k} \geq \liminf_{k \to \infty}\frac{1-\varepsilon}{(1+\varepsilon)^2} \int_{X}|f_k|^2_{h^k}dv_X.  
\]
By taking $\varepsilon \to 0$, we complete the proof of Theorem~\ref{theorem:2}.  

\begin{remark}
With almost no change to the proof of Theorem~\ref{theorem:2}, the following slight generalization holds: \\
Let $(j_k)_{k \in \mathbb{N}}$ be a strictly increasing sequence of positive integers.  
Let $(r_{k})_{k \in \mathbb{N}}$ be a sequence of small positive numbers such that 
$
\lim_{k \to \infty} j_k r_{k}^2 = \infty.  
$ 
Let $V_k = \{z \in M\, |\, d_{X}(z) < r_k\}$, and let $f_k \in \Gamma(V_k, L^{j_k})$.  
Then 
\[
\liminf_{k \to \infty} \left(\frac{2j_k}{\pi}\right)^{n/2} \|f_k\|^2_{V_{k}, h^{j_k}} \geq \liminf_{k \to \infty}\int_{X} |f_k|_{h^{j_k}}^2 dv_X.  
\]
\end{remark}

\section{Horizontal and Vertical subspaces of tangent spaces}\label{section:5}
In this section, we review some definitions and properties on the geometry of tangent bundles.   
Let $(X, g)$ be a Riemannian manifold of real dimension $n$, $\nabla^{LC}$ be the Levi-Civita connection of $g$, and $\Pi : TX \to X$ be the projection map.  
Let $\nu \in TX$ and $\Pi(\nu) = p \in X$.  
We denote the kernel of $d\Pi_{\nu}: T_{\nu}TX \to T_{p}X$ at $\nu$ by 
$\mathcal{V}_{\nu}$ and call it the vertical subspace of $T_{\nu}TX$.  
Let $E_{p} \in T_{p}X$.  
As there exists a natural isomorphism from $\mathcal{V}_{\nu}$ to $T_{p}X$, 
we can define the vertical lift $E^v_{p, \nu} \in \mathcal{V}_{\nu}$ of $E_p$ at $\nu$.  
Let $\gamma(t) = \mathrm{exp}_{p}(tE_p)$ be the exponential map induced by $\nabla^{LC}$.  
Then we have the parallel transport $\nu(t) \in TX$ of $\nu$ along $\gamma(t)$.  
That is,  $\nu(t)$ satisfies $\nu(0) = \nu$, $\Pi (\nu(t)) = \gamma(t)$ and $\nabla^{LC}_{\gamma'(t)} \nu(t) = 0$.  
The horizontal lift $E^{h}_{p, \nu}$ of $E_p$ is defined by $E_{p, \nu}^{h} = \nu'(0) \in T_{\nu}TX$.  
The set of all horizontal lift $\mathcal{H}_{\nu} = \{E_{p, \nu}^{h} \in T_{\nu}TX\, |\, E_p \in T_{p}X\}$ defines a $n$-dimensional subspace of $T_{\nu}TX$ and $\mathcal{H}_{\nu}$ is called the horizontal subspace of $T_{\nu}TX$.  
Then $T_{\nu}TX = \mathcal{H}_{\nu} \oplus \mathcal{V}_{\nu}$.  
If $E$ is a vector field on an open subset $\Omega$ of $X$, we have the vertical lift $E^v$ and horizontal lift $E^h$ of $E$.  
$E^v$ and $E^h$ are vector fileds on $\Pi^{-1}(\Omega)$.  
Let $\{E_1, \ldots, E_n\}$ be pointwise orthonormal vector fields on an open subset $\Omega \subset X$.  
We denote the dual basis of $\{E_1^h, \ldots, E_n^h, E_1^v, \ldots, E_n^v\}$ by $\{E_h^1, \ldots, E_h^n,  E_v^1, \ldots, E_v^n\}$, that is, $E_{h}^j$ and $E_{v}^j$ are one-forms on $\Pi^{-1}\Omega$ such that $\langle E_{i}^h, E_{h}^j\rangle = \delta_{ij}$, $\langle E_{i}^v, E_{v}^j\rangle = \delta_{ij}$, $\langle E_{i}^h, E_{v}^j\rangle = 0$.  
By identifying $\nu \in TX$ with $g(\cdot, \nu) \in T^{*}X$, 
we identify $TX$ with $T^*X$.  
There exists the canonical symplectic form $\omega_0$ on $TX$.  
It is known that $\omega_0 = \sum_{j=1}E_{v}^j \wedge E_{h}^j$ on $\Pi^{-1}\Omega$ (see \cite{Kli}).  
Let $\Psi_{k}: TX \to TX$ be multiplication by $\frac{1}{\sqrt{k}}$ in the fibers of $TX$.  
We have $\Psi_{k *}E_{j}^h = E_{j}^h$, $\Psi_{k *}E_j^v = \frac{1}{\sqrt{k}} E_{j}^v$ and $\Psi_{k}^* \omega_0 = \frac{1}{\sqrt{k}}\omega_0$.  

Now we assume that there exists a complex structure $J$ on a neighborhood $U \subset TX$ of $X$ 
such that $E_{j}^v = JE_{j}^h$ ($j = 1, \ldots, n$) on $X$.  
We also assume that there exists a K\"ahler form $\omega$ on $U$ such that $X$ is a Lagrangian submanifold of $(U, \omega)$, and that the Riemannian metric on $X$ induced by $\omega$ is equal to $g$.  
Then $\omega(E_{i}^h, E_{j}^v) = -\omega(E_{i}^v, E_{j}^h) = g(E_i, E_j) =\delta_{ij}$ and $\omega(E_{i}^h, E_{j}^h) = \omega(E_i^v, E_j^v) = 0$ on $X$.  
Hence $\omega|_{X} = -\omega_0|_{X}$, 
and $\lim_{k \to \infty}(\sqrt{k}\Psi_{k}^*\omega + \omega_0) = 0$ uniformly on any compact set of $TX$.  
For any $\nu \in TX$, we have 
$JE_{j}^v (\nu) = -E_{j}^{h}(\nu) + \sum_{i=1}^n O(|\nu|) E_{i}^v(\nu) + \sum_{i=1}^n O(|\nu|) E_{i}^h (\nu)$.  
Then we have 
\begin{align*}
&\lim_{k \to \infty} \frac{1}{\sqrt{k}}\left((\Psi_{k *})^{-1} (E_{j}^v+\sqrt{-1}JE_{j}^v)\right) (\nu) \\
= & \lim_{k \to \infty}\frac{1}{\sqrt{k}} \left(\sqrt{k}E_{j}^v(\nu) - \sqrt{-1}E_{j}^h(\nu) + \sum_{i=1}^nO(\frac{|\nu|}{\sqrt{k}}) \sqrt{k}E_{i}^v(\nu) + \sum_{i=1}^n O(\frac{|\nu|}{\sqrt{k}})E_{i}^h(\nu)\right) 
= E_{j}^v(\nu)
\end{align*}
uniformly for $\nu \in TX$ with bounded norm.  
This implies that the pullback of K\"ahler polarization by $\Psi_k$ converges to the vertical polarization as $k \to \infty$.  

\section{Convergence of holomorphic sections on the tangent bundle}\label{section:6}
Let $(M, \omega)$ be a K\"ahler manifold of complex dimension $n$.   
Let $(L, \nabla, h)$, $X$, $U$, $s \in C^{\infty}(U, L)$ be as in Section~\ref{section:2}.   
Let $(r_k)_{k \in \mathbb{N}}$ be a sequence of small positive numbers.  
Let $d_{X}(z)$ be the distance from $z \in M$ to $X$ with respect to the Riemannian metric $g_{\omega}(\cdot, \cdot) = \omega(\cdot, J\cdot)$.  
Let $V_k = \{z \in M\, |\, d_{X}(z) < r_k\}$.  
By identifying $\nu \in TX$ with $J\nu$, we identify $TX$ with the normal bundle $JTX \subset TM|_{X}$ of $X$.  
Hence we consider $V_k$ and $U$ as open subsets of $TX$ as in Section~\ref{section:3}. 
Let 
$f_k \in \Gamma(V_k, L^k)$.  
Put $f_k = u_k s^k$ ($u_k \in C^{\infty}(V_k)$).  
We say that $(f_k)_{k \in \mathbb{N}}$ satisfies $(\star)$ if $(f_k)_{k \in \mathbb{N}}$ satisfies the following two conditions $(\star 1), (\star 2)$: 
\begin{quote}
($\star 1$)
$\|f_{k}\|^2_{V_k, h^k} = O(k^{-n/2})$.  \\
($\star 2$)
For any smooth vector field $E$ on $X$ and any $\varepsilon > 0$, there exists $k_{0} \in \mathbb{N}$ such that 
\[
\int_{V_k} |E^h u_{k}|^2 e^{-k\varphi}\frac{\omega^n}{n!} \leq \varepsilon k \|f_{k}\|^2_{V_{k}, h^k}  
\]
for $k > k_0$.  
Here $E^h$ is the horizontal lift of $E$ (see Section~\ref{section:5}).
\end{quote} 
The following is our second main theorem:  
\begin{theorem}\label{theorem:3}
Let $(r_k)_{k \in \mathbb{N}}$ be a sequence of positive numbers such that 
\[
\lim_{k \to \infty} kr_k^2 = \infty \quad \text{and} \quad \lim_{k \to \infty} kr_k^3 = 0. 
\] 
Let 
$V_k = \{z \in M\, |\, d_{X}(z) < r_k\}$ and $V'_k = \{z \in M\, |\, d_{X}(z) < r_k/2\}$.  
Let $f_k \in \Gamma(V_k, L^k)$.  
Assume that $(f_{k})_{k \in \mathbb{N}}$ satisfies $(\star)$.  
Then there exists a sequence of smooth functions $(a_{k})_{k \in \mathbb{N}}$ on $X$ such that 
$\lim_{k \to \infty}k^{n/2}\| f_k - (\Pi^*a_k) s^k\|^2_{V'_k, h^k} = 0$.  
Here we consider $V_{k} \subset TX$, and $\Pi: TX \to X$ is the projection map.  
\end{theorem}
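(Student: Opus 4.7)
The plan is to take $a_k := u_k|_X \in C^\infty(X)$, where $u_k = f_k/s^k$, so that $\Pi^* a_k$ is the fiberwise-constant extension of this restriction, and to estimate $\|(u_k - \Pi^* a_k) s^k\|^2_{V'_k, h^k}$ by passing from $L^2$-bounds on fiber derivatives of $u_k$ to an $L^2$-bound on $u_k - \Pi^* a_k$ itself. I will work in local coordinates $(x, y)$ as in Section~\ref{section:2}, where $\varphi(x, y) = 2|y|^2 + O(|y|^3)$, the vertical lift satisfies $E_j^v = \partial_{y_j}$, and $\Pi^* a_k(x, y) = u_k(x, 0)$. Globalization then proceeds via a partition of unity on $X$.

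The first key step is to convert vertical derivatives into horizontal ones using almost-holomorphy. Since $\overline{\partial}(u_k s^k) = 0$ and $\overline{\partial} s$ vanishes to infinite order on $X$, one has $|\overline{\partial} u_k|_\omega \leq C_m k |\nu|^m |u_k|$ for every $m$, which yields $\int_{V_k} |\overline{\partial} u_k|_\omega^2 e^{-k\varphi} \omega^n/n! = O(k^{-m}) \|f_k\|^2_{V_k, h^k}$ by the standard Gaussian tail estimate on $|\nu|^{2m} e^{-k\varphi}$. Combining this with the identity $(E_j^v + \sqrt{-1} JE_j^v) u_k = 2 \overline{\partial} u_k (E_j^v)$ and the formula $JE_j^v(\nu) = -E_j^h(\nu) + O(|\nu|)$ from Section~\ref{section:5}, and absorbing the $O(|\nu|)$-terms using $|\nu| \leq r_k \to 0$, one obtains $\int_{V_k} |E^v u_k|^2 e^{-k\varphi} \omega^n/n! \leq C \int_{V_k} |E^h u_k|^2 e^{-k\varphi} \omega^n/n! + O(k^{-m}) \|f_k\|^2_{V_k, h^k}$. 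By condition $(\star 2)$ and $(\star 1)$, the right-hand side is at most $C\varepsilon k \|f_k\|^2_{V_k, h^k} = O(\varepsilon k^{1-n/2})$ for any $\varepsilon > 0$ and all sufficiently large $k$.

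The second step is to bound the $L^2$-norm of $u_k - \Pi^* a_k$ by the $L^2$-norm of the vertical gradient. Using the fundamental theorem $u_k(x, y) - u_k(x, 0) = \int_0^1 y \cdot (\nabla_y u_k)(x, ty) dt$ and Cauchy-Schwarz, one has $\int_{V'_k} |u_k - \Pi^* a_k|^2 e^{-k\varphi} \omega^n/n! \leq \int_0^1 dt \int |y|^2 |\nabla_y u_k(x, ty)|^2 e^{-k\varphi(x, y)} \omega^n/n!$. Substituting $z = ty$, using $\varphi(x, y) \approx 2|y|^2$, and performing the $t$-integral produces a kernel in $z$ that can be analyzed by splitting into the Gaussian bulk $\{k|z|^2 \geq 1\}$ (where a direct bound gives $\lesssim \frac{1}{k} \int_{V_k} |\nabla_y u_k|^2 e^{-k\varphi} \omega^n/n!$) and the inner region $\{k|z|^2 < 1\}$ (where pointwise Bergman/subharmonic-type estimates on $|\nabla_y u_k|^2$, valid by almost-holomorphy, together with the trace inequality $\int_X |E^h u_k|^2 dv_X \lesssim k^{n/2} \int_{V_k} |E^h u_k|^2 e^{-k\varphi} \omega^n/n!$, give the matching bound $O(\varepsilon k^{-n/2})$). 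Combined with the vertical-to-horizontal bound from the first step, this yields $O(\varepsilon k^{-n/2})$; multiplying by $k^{n/2}$ and letting $\varepsilon \to 0$ completes the proof.

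The main technical obstacle I expect is the Poincar\'e-type step just described. A standard Gaussian Poincar\'e inequality with weight $e^{-2k|y|^2}$ compares a function to its Gaussian mean, not to its value at $y = 0$, and the naive substitution $z = ty$ produces a divergent factor $\int_0^1 t^{-n-2} dt$ near $t = 0$. The inner-region contribution, where $|z|\sqrt{k} \lesssim 1$, therefore cannot be controlled by the Gaussian tail alone and must instead be estimated through almost-holomorphy, e.g. by using subharmonicity of $|\nabla_y u_k|^2$ up to errors from $\overline{\partial} u_k$ to obtain a pointwise bound on $|\nabla_y u_k|^2$ by a Bergman-type ball average. Getting the constants to match so that the inner-region contribution is genuinely $o(k^{-n/2})$, rather than merely $O(k^{-n/2})$, is the subtlest part of the argument.
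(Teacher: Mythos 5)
Your high-level plan — trade vertical derivatives of $u_k$ for horizontal ones via the Cauchy--Riemann relation, then use a Poincar\'e/FTC argument to bound $\|u_k - \Pi^*a_k\|^2$ by the vertical gradient — is genuinely different from the paper's proof, and you correctly identify where it becomes delicate. But the gap you flag in the inner region $\{k|y|^2 \lesssim 1\}$ is not a technicality that can be patched by the tools you cite; it is, as far as I can see, a real obstruction, and the paper's argument is structured precisely to avoid it.

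The specific problem is your reliance on ``pointwise Bergman/subharmonic-type estimates on $|\nabla_y u_k|^2$, valid by almost-holomorphy.'' Almost-holomorphy of $u_k$ does \emph{not} transfer to $E^v u_k$: the commutator $[\overline{\partial}, E^v]$ is a genuine first-order differential operator (its coefficients involve $\overline{\partial}$ of the components of $E^v$, which do not vanish), so $\overline{\partial}(E^v u_k) = E^v(\overline{\partial} u_k) + [\overline{\partial}, E^v]u_k$ is bounded only by $|\nabla u_k|$, not by $O(k|y|^m|u_k|)$. Consequently neither the H\"ormander sub-mean-value lemma applied to $E^v u_k$ nor a trace inequality for $\int_X |E^v u_k|^2\,dv_X$ closes, since the latter produces a second vertical-derivative term $\int_{V_k}|\nabla_y^2 u_k|^2 e^{-k\varphi}$ that hypothesis $(\star)$ does not control. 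Your own back-of-the-envelope sub-mean-value bound would give $\sup_X |E^v u_k|^2 \lesssim \varepsilon k^{1+n/2}$, which is short of the required $O(\varepsilon k)$ by a factor $k^{n/2}$; the trace version gets the main term right but leaves an uncontrolled cross term. There is a related mismatch in the target: your $a_k = u_k|_X$ is the value at the zero section, whereas a Gaussian-weighted $L^2$ argument naturally compares to the Gaussian fiber mean, and the two are not a priori close under $(\star)$ alone.

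The paper circumvents all of this by staying at the level of sections and invoking the Bochner--Kodaira identity $\Delta f_k = 2\Box f_k + 2kn f_k$. Since $f_k$ is holomorphic, $\Box f_k = 0$, so $f_k$ is an exact eigensection of the Bochner Laplacian with eigenvalue $2kn$, and \emph{this} is the second-order input your FTC argument is missing. Pairing with $\chi_k^2 f_k$ and using $(\star 2)$ to discard the horizontal covariant derivatives yields, after the substitution $v_k = u_k e^{-k|y|^2}$ and rescaling by $\Psi_k$, the sharp spectral inequality $\langle L\widetilde{v}_k,\widetilde{v}_k\rangle \le (1+\varepsilon)2n\|\widetilde{v}_k\|^2$ for the fiberwise harmonic oscillator $L = -\Delta_y + 4|y|^2$. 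The spectral gap ($2n$ to $2n+4$) then forces $\widetilde{v}_k$ to concentrate on the ground state $e^{-|y|^2}$, and the coefficient of that ground state is exactly the Gaussian fiber average that defines $a_k$; there is no Poincar\'e step, no trace on $X$, and no pointwise bound needed. You also omit the reduction in the general case: the spectral inequality uses Lemma~\ref{lemma:3}, which needs $\liminf_k\int_X|f_k|^2_{h^k}\,dv_X>0$, and the paper handles the remaining case by adding a constructed sequence $F_k$ (Lemmas~\ref{lemma:6},~\ref{lemma:7}) so that both $F_k$ and $f_k+F_k$ fall into the first case. If you want to salvage an FTC-type argument, you would first need a mechanism — essentially a disguised version of the Bochner--Kodaira identity — to control $\int_{V_k}|\nabla_y^2 u_k|^2 e^{-k\varphi}$ from $(\star)$; without that the inner region is not $o(k^{-n/2})$.
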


Let us begin the proof of Theorem~\ref{theorem:3}.  
First we assume that $\liminf_{k \to \infty}\int_X |f_k(x)|_{h^k}^2 dv_X > 0$.  
In this case, we have the following.  
\begin{lemma}\label{lemma:3}
Under the same assumption as in Theorem~\ref{theorem:3}, we further assume that 
$\liminf_{k \to \infty} \int_X |f_k(x)|^2_{h^k} dv_X > 0$.  
Then $\|f_{k}\|^2_{V_{k}, h^k} = O(\|f_{k}\|^2_{V'_{k}, h^k})$.  
\end{lemma}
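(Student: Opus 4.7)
The plan is to combine the micro-local lower bound from Theorem~\ref{theorem:2} with condition $(\star 1)$ from the hypothesis of Theorem~\ref{theorem:3}. The essential point is that $(\star 1)$ gives a matching upper bound on $\|f_k\|^2_{V_k, h^k}$, while Theorem~\ref{theorem:2} applied on the smaller tube $V'_k$ gives a lower bound of the same order.

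First I would check that Theorem~\ref{theorem:2} can be applied with the radius sequence $(r_k/2)_{k \in \mathbb{N}}$. Since $V'_k \subset V_k$ and $f_k$ is holomorphic on $V_k$, certainly $f_k \in \Gamma(V'_k, L^k)$. Moreover, the growth condition $k(r_k/2)^2 = \frac{1}{4}kr_k^2 \to \infty$ is inherited from the assumption $kr_k^2 \to \infty$ made in Theorem~\ref{theorem:3}. Thus Theorem~\ref{theorem:2} yields
\[
\liminf_{k \to \infty} \left(\frac{2k}{\pi}\right)^{n/2} \|f_k\|^2_{V'_k, h^k} \;\geq\; \liminf_{k \to \infty} \int_X |f_k|^2_{h^k}\, dv_X.
\]

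Next I would use the additional hypothesis of Lemma~\ref{lemma:3}, namely $\liminf_{k \to \infty} \int_X |f_k|^2_{h^k}\, dv_X > 0$. Combined with the previous inequality, this produces a positive constant $c > 0$ and an index $k_1$ such that $\|f_k\|^2_{V'_k, h^k} \geq c\, k^{-n/2}$ for all $k \geq k_1$. On the other hand, condition $(\star 1)$ supplies a constant $C > 0$ with $\|f_k\|^2_{V_k, h^k} \leq C\, k^{-n/2}$ for all $k$. Dividing these two estimates gives $\|f_k\|^2_{V_k, h^k} \leq (C/c)\, \|f_k\|^2_{V'_k, h^k}$ for $k \geq k_1$, which is precisely the assertion $\|f_k\|^2_{V_k, h^k} = O(\|f_k\|^2_{V'_k, h^k})$.

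Since the proof reduces to directly invoking an already established theorem with minor bookkeeping, there is no serious obstacle; the only point requiring any thought is verifying that the hypotheses of Theorem~\ref{theorem:2} (in particular the growth condition $k(r_k/2)^2 \to \infty$) are inherited by the halved radius sequence, which is immediate. Condition $(\star 2)$ plays no role in this lemma.
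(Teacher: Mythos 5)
Your proof is correct and follows essentially the same route as the paper: apply Theorem~\ref{theorem:2} on the shrunk tube $V'_k$ (noting that $k(r_k/2)^2 \to \infty$), use the liminf hypothesis to deduce $\|f_k\|^2_{V'_k, h^k} \gtrsim k^{-n/2}$ for large $k$, and combine with the $(\star 1)$ upper bound $\|f_k\|^2_{V_k, h^k} \lesssim k^{-n/2}$.
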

\begin{proof}
Let $C > 0$ such that $\|f_{k}\|^2_{V_{k}, h^k} \leq C k^{-n/2}$.  
By Theorem~\ref{theorem:2}, we have 
\[
\left(\frac{2k}{\pi}\right)^{n/2}\|f_{k}\|^2_{V'_{k}, h^k} \geq \frac{1}{2}\liminf_{k \to \infty}\int_{X}|f_k(x)|_{h^k}^2 dv_X > 0
\]
for large $k$.  
Hence $\|f_{k}\|^2_{V_k, h^k} \leq C\left(\frac{2}{\pi}\right)^{n/2} \left(\frac{1}{2}\liminf_{k \to \infty} \int_X |f_k(x)|_{h^k}^2 dv_x \right)^{-1} \|f_{k}\|^2_{V'_k, h^k}$ for large $k$.   
\end{proof}

Let $\Omega \subset X$ be an open subset of $X$ such that there exist pointwise orthonormal vector fields $E_1, \ldots, E_n$ on $\Omega$.  
We take a smooth local coordinate system $(x, y)$ on $\Pi^{-1}\Omega$ such that $(x, 0)$ corresponds to a point in $\Omega \subset \Pi^{-1}\Omega$, and $(x, y)$ corresponds to $y_1 (E_{1})_{(x, 0)} + \cdots + y_n (E_{n})_{(x, 0)} \in \Pi^{-1}\Omega$.  
This local coordinate system is essentially the same as the one introduced in Section~\ref{section:2}.  
We note that $|y| = \sqrt{y_1^2 + \cdots + y_n^2}$ is equal to the norm of tangent vector which corresponds to $(x, y)$, so one may assume that $|y|$ is a smooth function defined on the entire tangent space $TX$.  
Let $\varphi = -\log |s|^2$. 
Since $\frac{dd^c \varphi}{4} = \omega$, 
$\varphi (x, y) = 2|y|^2 + O(|y|^3)$ (see Section~\ref{section:2}).  
Let $E_{j}^h$ be the horizontal lift of $E_{j}$ and $E_{j}^v$ be the vertical lift of $E_{j}$ on $\Pi^{-1}\Omega$.  
We note that $E_{j}^v$ can be written as $\frac{\partial}{\partial y_j}$ in the local coordinate system $(x, y)$.  
Then $g_{\omega}(E_{i}^h, E_{j}^h) = \delta_{ij} + O(|y|)$, $g_{\omega}(E_{i}^v, E_{j}^v) = \delta_{ij} + O(|y|)$ and $g_{\omega}(E_{i}^h, E_{j}^v) = O(|y|)$.   
Let $J$ be a complex structure of $M$.   
Then $J E_{j}^h = E_{j}^v + O(|y|)$, 
$JE_{j}^v = -E_{j}^h + O(|y|)$.  
\begin{lemma}\label{lemma:4}
We have 
\[
\nabla_{E_{j}^h} s 
= 2\sqrt{-1} y_{j} s+ O(|y|^2), \quad 
\nabla_{E_{j}^v} s 
= -2y_j s + O(|y|^2)  
\]
around $\Omega \subset \Pi^{-1}\Omega$.  
\end{lemma}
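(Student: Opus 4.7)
The plan is to compute the connection form $\theta$ defined by $\nabla s = \theta \otimes s$ in the frame $s$ and then evaluate it on the horizontal and vertical lifts. Writing $s = \xi s_0$ with $s_0$ a local holomorphic frame of $L$ and $\xi$ an almost-holomorphic extension of $\zeta/s_0$, the Chern connection formula $\nabla s_0 = -\partial\varphi_0 \otimes s_0$ (where $\varphi_0 = -\log|s_0|^2_h$) gives $\theta = d\log\xi - \partial\varphi_0$. Combining this with $\varphi = \varphi_0 - \log|\xi|^2$ yields
\[
\theta = -\partial\varphi + \overline{\partial}\log\xi - \partial\log\overline{\xi}.
\]
Because $\overline{\partial}\xi$ vanishes to infinite order on $X$, so do both $\overline{\partial}\log\xi$ and its complex conjugate $\partial\log\overline{\xi}$. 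Therefore
\[
\nabla_V s = -\partial\varphi(V)\,s + O(|y|^\infty)\,s
\]
for every smooth vector field $V$ defined near $X$, reducing the lemma to the computation of $\partial\varphi(E_j^h)$ and $\partial\varphi(E_j^v)$ modulo $O(|y|^2)$.

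For the second step, the identity $\partial\varphi(V) = \tfrac{1}{2}\bigl(V(\varphi) - \sqrt{-1}(JV)(\varphi)\bigr)$, valid for real $\varphi$ and real $V$, converts the task into evaluating $E_j^h, E_j^v, JE_j^h, JE_j^v$ on $\varphi = 2|y|^2 + O(|y|^3)$. The vertical case is immediate: $E_j^v(\varphi) = \partial\varphi/\partial y_j = 4y_j + O(|y|^2)$. For the horizontal case, at a point $(x,y)$ the horizontal lift $E_j^h$ projects to $E_j(x)$ and its vertical component is linear in $y$ with coefficients given by Levi-Civita Christoffel symbols on $X$ evaluated at $x$; consequently $E_j^h(y_l) = O(|y|)$ and $E_j^h(\varphi) = O(|y|^2)$. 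The identities $JE_j^h = E_j^v + O(|y|)$ and $JE_j^v = -E_j^h + O(|y|)$ recalled immediately before the lemma then give $(JE_j^h)(\varphi) = 4y_j + O(|y|^2)$ and $(JE_j^v)(\varphi) = O(|y|^2)$.

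Putting the computations together produces $\partial\varphi(E_j^h) = -2\sqrt{-1}\,y_j + O(|y|^2)$ and $\partial\varphi(E_j^v) = 2y_j + O(|y|^2)$, which by the first step translates into the two stated formulas for $\nabla_{E_j^h}s$ and $\nabla_{E_j^v}s$. The main subtlety is obtaining the sharper bound $E_j^h(\varphi) = O(|y|^2)$ rather than merely $O(|y|)$; this hinges on the fact that the vertical part of $E_j^h$ vanishes along the zero section, which combined with the quadratic vanishing of $\varphi$ on $X$ produces the extra order.
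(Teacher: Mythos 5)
Your proof is correct and follows essentially the same route as the paper: express the connection form in the frame $s$ as $-\partial\varphi$ up to an $O(|y|^{\infty})$ error, convert $\partial\varphi(V)$ into $\tfrac{1}{2}(V\varphi - \sqrt{-1}(JV)\varphi)$, and evaluate using the coordinate behavior of the horizontal and vertical lifts and of $J$ near the zero section. The only cosmetic difference is that the paper invokes the exact identity $E_j^h|y|^2 = 0$ (metric compatibility of the Levi--Civita connection) where you use the weaker but entirely sufficient bound $E_j^h(y_l) = O(|y|)$.
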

\begin{proof}
Since $\overline{\partial}s$ vanishes to infinite order on $X$, 
the connection form of $(L, \nabla)$ associated to the trivialization $s$ can be written by $-\partial \varphi + O(|y|^{m})$ on $U$ for any $m \in \mathbb{N}$.  
By the definition of $E_{j}^h$ and $E_j^v$, we have 
$E_{j}^h |y|^2 = 0$, $E_{j}^v|y|^2 = 2y_{j}$.  
Hence we have 
$
\nabla_{E_{j}^h} s = - \left(\frac{1}{2}(E_{j}^h - \sqrt{-1} JE_{j}^h) \varphi\right) s
= 2\sqrt{-1} y_{j} s+ O(|y|^2).  
$
The second equation holds in the same way.  
\end{proof}
Let $\{\Omega_{\lambda}\}$ be a finite open covering of $X$ and $\{\eta_{\lambda}\}$ be a partition of unity subordinate to $\{\Omega_{\lambda}\}$.   
We regard $\eta_{\lambda}$ as a smooth function on $TX$ 
by extending $\eta_{\lambda}$ so that it is constant in the fiber direction.  
We may assume that there exists pointwise orthonormal vector fields $E_{\lambda, 1}, \ldots, E_{\lambda, n}$ on $\Omega_{\lambda}$.  
For the simplicity, we often omit the notation $\lambda$ and write $E^{h}_j = E^{h}_{\lambda, j}$, $E_{j}^v = E_{\lambda, j}^{v}$ if there is no fear of confusion.  
We take a smooth coordinate $(x_{\lambda, 1}, \ldots, x_{\lambda, n}, y_{\lambda, 1}, \ldots, y_{\lambda, n})$ on $T\Omega_{\lambda}$ as described above, and we also write $x_{\lambda, j}, y_{\lambda, j}$ as $x_{j}, y_{j}$.  
Let $\chi \in C^{\infty}_0(\mathbb{R})$ a smooth function such that $0 \leq \chi \leq 1$,  $\chi(t) = 1$ for $|t| \leq \frac{1}{2}$ and that $\chi(t) = 0$ for $|t| \geq 1$.  
Put $\chi_{k}(y) = \chi (r_k^{-1} |y|)$.  
We note that  $\chi_k$ is defined on the entire tangent bundle $TX$.  
\begin{lemma}\label{lemma:5}
Let $u_{k} \in C^{\infty}(V_{k})$ such that $f_k = u_k s^k$.  
Put $v_k = u_k e^{-k|y|^2}$.  
Assume the same assumptions as in Lemma~\ref{lemma:3}.  
For any small $\varepsilon > 0$, there exists $k_0 > 0$ such that 
\begin{align}\label{equation:6}
\sum_{\lambda} \sum_{j=1}^n \int \eta_{\lambda}|E_{j}^v (\chi_k v_k)|^2 \frac{\omega^n}{n!} + \int 4k^2|y|^2 |\chi_kv_k|^2\frac{\omega^n}{n!}  \leq (1+\varepsilon) 2kn \int |\chi_k v_k|^2 \frac{\omega^n}{n!}
\end{align}
for $k > k_0$.  
\end{lemma}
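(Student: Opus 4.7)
The plan is to perform a single integration by parts that converts the left-hand side into the squared $L^2$-norm of $(\partial_{y_{\lambda,j}}+2ky_{\lambda,j})V$, where $V=\chi_k v_k$, plus a clean $2kn(1+o(1))\int|V|^2\frac{\omega^n}{n!}$ term; then to show that the operator $\partial_{y_{\lambda,j}}+2ky_{\lambda,j}$ almost annihilates $V$ by combining the Gaussian factor $e^{-k|y|^2}$ built into $v_k$ with the holomorphicity of $f_k$.

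In each chart $\Omega_\lambda$ one has $E_{\lambda,j}^v=\partial_{y_{\lambda,j}}$, $|y|^2=\sum_j y_{\lambda,j}^2$, and $\omega^n/n!=(1+O(|y|))\,dv_X\,dy$. Since $\eta_\lambda$ is fiber-constant, the pointwise identity
\[
|\partial_{y_{\lambda,j}}V|^2+4k^2 y_{\lambda,j}^2|V|^2=|(\partial_{y_{\lambda,j}}+2ky_{\lambda,j})V|^2-2ky_{\lambda,j}\partial_{y_{\lambda,j}}|V|^2,
\]
together with integration by parts in $y_{\lambda,j}$ (whose boundary contribution vanishes since $V$ has compact fiber support), will give
\[
\mathrm{LHS}=\sum_{\lambda,j}\int\eta_\lambda\bigl|(\partial_{y_{\lambda,j}}+2ky_{\lambda,j})V\bigr|^2\frac{\omega^n}{n!}+2kn(1+O(r_k))\int|V|^2\frac{\omega^n}{n!},
\]
so it will suffice to bound the first sum by $o(k)\int|V|^2\frac{\omega^n}{n!}$. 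The crucial algebraic cancellation $(\partial_{y_{\lambda,j}}+2ky_{\lambda,j})v_k=(E_{\lambda,j}^v u_k)\,e^{-k|y|^2}$ decomposes the integrand further as $(\partial_{y_{\lambda,j}}+2ky_{\lambda,j})V=(\partial_{y_{\lambda,j}}\chi_k)v_k+\chi_k(E_{\lambda,j}^v u_k)e^{-k|y|^2}$.

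The cutoff piece $(\partial_{y_{\lambda,j}}\chi_k)v_k$ is supported in the annulus $r_k/2\le|y|\le r_k$ with $|\partial_{y_{\lambda,j}}\chi_k|\lesssim 1/r_k$. Using $|v_k|^2\sim|f_k|_{h^k}^2$ (which follows from $\varphi=2|y|^2+O(|y|^3)$ together with $kr_k^3\to 0$), it contributes $O(r_k^{-2})\|f_k\|_{V_k,h^k}^2=o(k)\|f_k\|_{V_k,h^k}^2$, thanks to $kr_k^2\to\infty$. For the holomorphicity piece, the relation $\nabla_{\bar Z_{\lambda,j}}f_k=0$ with $\bar Z_{\lambda,j}=\tfrac12(E_{\lambda,j}^v+\sqrt{-1}JE_{\lambda,j}^v)$, combined with Lemma~\ref{lemma:4}, gives $\bar Z_{\lambda,j}u_k=O(k|y|^m)u_k$ for every $m$. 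Expanding $\bar Z_{\lambda,j}=\tfrac12(E_{\lambda,j}^v-\sqrt{-1}E_{\lambda,j}^h)+O(|y|)\cdot(\text{frame vectors})$ and absorbing the small $O(|y|)$ vertical corrections (valid because $|y|\le r_k$) will produce the pointwise bound $|E_{\lambda,j}^v u_k|^2\lesssim\sum_i|E_{\lambda,i}^h u_k|^2+k^2 r_k^{2m}|u_k|^2$. Multiplying by $\chi_k^2 e^{-2k|y|^2}\sim\chi_k^2 e^{-k\varphi}$ and integrating, the horizontal terms are bounded by $\varepsilon k\|f_k\|_{V_k,h^k}^2$ via hypothesis $(\star 2)$ applied to smooth global extensions of the $E_{\lambda,j}$ on $X$, while $k^2 r_k^{2m}=o(k)$ for $m\ge 2$ by $kr_k^3\to 0$. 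Lemma~\ref{lemma:3} then replaces $\|f_k\|_{V_k,h^k}^2$ by a multiple of $\int|V|^2\frac{\omega^n}{n!}$, closing the argument.

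The main technical obstacle will be the holomorphicity-based conversion $|E_{\lambda,j}^v u_k|^2\lesssim\sum_i|E_{\lambda,i}^h u_k|^2+\cdots$: the off-$X$ defect of $\bar Z_{\lambda,j}$ mixes vertical and horizontal directions at order $O(|y|)$, so the vertical derivatives of $u_k$ appear on both sides of the inequality and must be self-absorbed via the smallness of $|y|\le r_k$. This is where both conditions $kr_k^2\to\infty$ (controlling the cutoff boundary error) and $kr_k^3\to 0$ (ensuring both $|v_k|^2\sim|f_k|_{h^k}^2$ and $e^{-2k|y|^2}\sim e^{-k\varphi}$, as well as $k^2 r_k^{2m}=o(k)$) enter essentially.
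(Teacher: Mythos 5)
Your proposal is correct, and it follows a genuinely different route from the paper's. The paper obtains the factor $2kn$ from the global Bochner--Kodaira (Weitzenb\"ock) identity $\Delta = 2\Box + 2kn$, together with $\Box f_k = 0$: it pairs $\Delta f_k$ against $\chi_k^2 f_k$, expands $\nabla = \nabla^{h} + \nabla^{v}$ in the local frame, and then uses Lemma~\ref{lemma:4} to trade $\nabla_{E_j^h} f_k$ for $2ky_j u_k$ (its square produces the $4k^2|y|^2$ term) while discarding $|E_j^h u_k|^2$ via $(\star 2)$. The vertical term $\nabla_{E_j^v} f_k$ is converted to $E_j^v v_k$ at the very end, and $|E_j^v u_k|$ is never controlled directly. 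Your argument instead produces the $2kn$ by a purely fiberwise integration by parts of the Euler term $-2k\sum_j y_j \partial_{y_j}|V|^2$, with no Weitzenb\"ock formula; it then uses the holomorphicity of $f_k$ pointwise through the Cauchy--Riemann relation $\bar Z_{\lambda,j} u_k = O(k|y|^2)u_k$ to turn the residual $|E_j^v u_k|$ into a horizontal quantity, which is what $(\star 2)$ sees. This makes transparent why a hypothesis on \emph{horizontal} derivatives alone suffices to control vertical oscillations, at the price of a slightly more intricate self-absorption of the $O(|y|)$ off-$X$ frame defect.

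Two small precisions worth noting. First, Lemma~\ref{lemma:4} as stated gives only $O(|y|^2)$ error, so your claim that $\bar Z_{\lambda,j} u_k = O(k|y|^m)u_k$ for \emph{every} $m$ should either cite the infinite-order vanishing of $\overline\partial s$ directly or be scaled back to $m=2$; either way the exponent $m=2$ suffices since $k^2 r_k^4 = (kr_k^3)(kr_k) r_k^0 \cdot r_k = o(k)$ as $kr_k^3\to 0$. Second, the pointwise inequality $|E_{\lambda,j}^v u_k|^2 \lesssim \sum_i |E_{\lambda,i}^h u_k|^2 + \dots$ is valid only after summing over $j$, since the $O(|y|)\,E_i^v u_k$ defect mixes all vertical indices and self-absorption requires the full sum on the left; this should be stated with $\sum_j$ on both sides. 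Neither point affects the validity of the argument.
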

\begin{proof}
The Kodaira Laplacian $\Box=\overline{\partial}^{*} \overline{\partial}$ and the Bochner Laplacian $\Delta = \nabla^{*} \nabla$ of $(L^k, h^k)$ satisfy   
\[
\Delta g = 2\Box g+ 2kn g 
\]
for a smooth section $g$ of $L^k$ (see e.g., Section~2 of \cite{Hat}).  
The constant factor $2$ in the last term arises because $\sqrt{-1}F^{\nabla} = 2 \omega$.  
We have 
$
(\Delta f_k,  \chi^2_k f_k)_{h^k} = 2kn(f_k,  \chi^2_k f_k)_{h^k}, 
$ 
and the left hand side is equal to $(\nabla f_k, \nabla(\chi_k^2 f_k))_{h^k, \omega}$.  
Then 
\begin{align*}
&\sum_{\lambda} \sum_{j = 1}^n \left((\nabla_{E_{j}^h}f_{k}, \eta_{\lambda} \nabla_{E_{j}^h}(\chi^2_k f_k))_{h^k} + (\nabla_{E_{j}^v}f_k, \eta_{\lambda} \nabla_{E_{j}^{v}}( \chi^2_k f_k))_{h^k}\right)\\
 + & O(r_k) \sum_{\lambda, i, j, a, b} (\nabla_{E_{i}^{a}} f_k, \eta_{\lambda} \nabla_{E_{j}^{b}}(\chi^2_k f_{k}))_{h^k}
=  2kn \|\chi_k f_k\|^2_{h^k}, 
\end{align*}
where $a, b$ run through $\{v, h\}$.   
In the context below, the same letter $C>0$ will be used to denote different constants depending on the same set of arguments.
Let $\varepsilon > 0$ be any small number.  
The Cauchy-Schwarz inequality and $|E_{j}^{b} \chi_{k}| = O(r_{k}^{-1})$ imply  
\begin{align}
& |(\nabla_{E_{i}^a} f_{k}, \eta_{\lambda} \nabla_{E_{j}^b}( \chi^2_k f_{k}))_{h^k} - (\nabla_{E_{i}^a}f_{k},\eta_{\lambda} \chi^2_k \nabla_{E_{j}^b} f_k)_{h^k}| \label{equation:5} \\
= & |(\nabla_{E_{i}^a} f_k, 2 \eta_{\lambda} \chi_k f_k E_{j}^b \chi_k)_{h^k}| 
\leq
\varepsilon \| \sqrt{\eta_{\lambda}} \chi_k \nabla_{E_{i}^a} f_k\|^2_{h^k} + \frac{C}{\varepsilon r_{k}^{2}} \|\sqrt{\eta_{\lambda}} f_{k}\|^2_{V_k, h^k}.   \nonumber
\end{align}
We have $\|f_{k}\|^2_{V_{k}} \leq C \|\chi_k f_k\|^2$ by Lemma~\ref{lemma:3}.   
Hence, by rescaling $\varepsilon$ by a constant factor, we have  
\begin{align*}
\sum_{\lambda}\sum_{j = 1}^n \left(\|\sqrt{\eta_{\lambda}}\chi_k \nabla_{E_{j}^h}f_{k}\|^2_{h^k} + \|\sqrt{\eta_{\lambda}} \chi_k \nabla_{E_{j}^v}f_k\|^2_{h^k}\right) 
\leq  (1 + \varepsilon) 2kn \|\chi_k f_k\|^2_{h^k}.  
\end{align*}
for sufficiently large $k$.  
Here we used $\lim_{k \to \infty}\frac{1}{kr_{k}^2} = 0$.  
By Lemma~\ref{lemma:4}, 
\begin{align*}
& | \sqrt{\eta_{\lambda}} \chi_k \nabla_{E_j^h} f_k|^2_{h^k} 
=  \eta_{\lambda} \chi_k^2 |(E_j^h u_k + 2ky_j \sqrt{-1}u_k + kO(|y|^2)u_k )|^2 e^{-k\varphi} \\
\geq & \frac{1}{1 + \varepsilon} \eta_{\lambda} \chi_k^2 |2ky_j u_k|^2e^{-k\varphi} - 
\frac{2}{\varepsilon} \left(
\eta_{\lambda} \chi_k^2 |E_{j}^h u_k |^2 e^{-k\varphi} + k^2 r_k^{4}|u_k|^2 e^{-k\varphi}
\right) 
\end{align*}
and 
\begin{align*}
& |\sqrt{\eta_{\lambda}} \chi_k \nabla_{E_j^v}f_k|^2_{h^k} 
=  \eta_{\lambda} \chi_k^2 |E_j^v u_k -2ky_j u_k+ kO(|y|^2)u_k|^2 e^{-k\varphi} \\
\geq & \frac{1}{1 + \varepsilon} \eta_{\lambda} \chi_k^2 |E_{j}^v u_k - 2ky_j u_k|^2 e^{-k\varphi} - \frac{1}{\varepsilon} k^2 r_k^4 |u_k|^2 e^{-k\varphi}.  
\end{align*}
Here we used $|c+d|^2 \geq \frac{1}{1 + \varepsilon} |c|^2 - \frac{1}{\varepsilon} |d|^2$ for $c, d \in \mathbb{C}$.  
By condition $(\star 2)$, 
we have 
\[
\int \eta_{\lambda} \chi_k^2 |E_{j}^h u_k |^2 e^{-k\varphi} \frac{\omega^n}{n!} \leq \varepsilon^2 k 
\|\chi_k^2 f_k\|_{h^k}^2
\]
for sufficiently large $k$.  
Here we again used $\|f_k\|_{V_{k}}^2 \leq C \|\xi_k f_k\|^2$.  
Combining these and $\lim_{k \to \infty}kr_k^4 = 0$, 
we have 
\[
\int \left(  \sum_{\lambda}\sum_{j = 1}^n
\eta_{\lambda} \chi_k^2 |E_{j}^v u_k - 2y_j k u_k|^2 e^{-k\varphi}
+ 4k^2 \chi_k^2  |y|^2 |u_k|^2 e^{-k\varphi} 
\right) \frac{\omega^n}{n!} 
\leq (1 + \varepsilon) 2kn \|\chi_k f_k\|^2_{h^k}  
\]
for large $k$ if we rescale $\varepsilon$ by a constant factor.  
Put $v_k = u_k e^{-k|y|^2}$.  
Since $E_{j}^v|y|^2 = 2y_j$, we have 
\[
|E_{j}^v u_k - 2y_j k u_k|^2 e^{-k\varphi} = | E_{j}^v v_k|^2 e^{-kO(|y|^3)}.  
\]
The condition $\lim_{k \to \infty} kr_k^3 = 0$ gives $\lim_{k \to \infty}e^{-kO(|y|^3)} = 1$ uniformly on the support of $\chi_k$.  
Hence we have 
\begin{align*}
&\int \left(\sum_{\lambda} \sum_{j=1}^n \eta_{\lambda}\chi_k^2 |E_{j}^v v_k |^2 + 4k^2 \chi_k^2 |y|^2|v_k|^2
\right)\frac{\omega^n}{n!}  
\leq  (1 + \varepsilon) 2kn \int \chi_k^2 |v_k |^2 \frac{\omega^n}{n!} 
\end{align*}
for sufficiently large $k$.  
Here we again rescaled $\varepsilon$ by a constant factor.  
In a similar way to (\ref{equation:5}), the following inequality holds:  
\begin{align*}
&   |E_j^v(\chi_k v_k)|^2 - |\chi_k E_j^v v_k|^2   
= |v_k E_{j}^v \chi_k + \chi_k E_j^v v_k|^2  -  |\chi_k E_j^v v_k|^2  \\
\leq & \left(1 + \frac{1}{\varepsilon}\right) |v_k E_{j}^v \chi_k|^2 + \varepsilon |\chi_k E_{j}^v v_k|^2
\leq  \frac{Ck}{\varepsilon kr_k^2}  |v_k|^2 + \varepsilon |\chi_k E_j^v v_k|^2.  
\end{align*}
By Lemma~\ref{lemma:3}, we have $\int_{V_k} |v_k|^2 \frac{\omega^n}{n!} \leq C \int \chi_k^2 |v_k|^2 \frac{\omega^n}{n!}$ for sufficiently large $k$.  
Then, 
by rescaling $\varepsilon$ by a constant factor again, we complete the proof of Lemma~\ref{lemma:5}.  
\end{proof}

Let $dv_Xdy_{\lambda, 1} \cdots dy_{\lambda, n}$ be the product measure on $ \Pi^{-1}\Omega_{\lambda} \simeq \Omega_{\lambda} \times \mathbb{R}^n$.  
We have $\frac{\omega^n}{n!} = (1 + O(|y|))dv_Xdy_{\lambda, 1} \cdots dy_{\lambda, n}$ around $\Omega_{\lambda} \subset \Pi^{-1}\Omega_{\lambda}$.  
Here $|y| = |y_{\lambda}|$ does not depend on $\lambda$.  
By rescaling $\varepsilon$ by a constant factor, (\ref{equation:6}) shows
\begin{align*}
& \sum_{\lambda}\int \eta_{\lambda} \left(\sum_{j=1}^n |E_{j}^v (\chi_k v_k)|^2 +4 k^2 |y|^2 |\chi_k v_k|^2  \right)dv_X dy_{\lambda, 1}\cdots dy_{\lambda, n} \\
\leq & (1+\varepsilon)2kn \sum_{\lambda} \int \eta_{\lambda}\chi_{k}^2|v_k|^2 dv_Xdy_{\lambda, 1}\cdots dy_{\lambda, n}   
\end{align*}
for sufficiently large $k$.  
Let $\Psi_{k}: TX \to TX$ be multiplication by $\frac{1}{\sqrt{k}}$ in the fibers of $TX$.  
Put $\widetilde{v_k} = \Psi_k^*(\chi_k v_k)$.  
By 
pulling back with $\Psi_k$ and multiplying by $k^{n/2-1}$,  the above inequality shows 
\begin{align}
& \sum_{\lambda}\int \eta_{\lambda}\left(\sum_{j=1}^n  |E_{j}^v \widetilde{v_k}|^2 +4  |y|^2 |\widetilde{v_k}|^2 \right) dv_Xdy_{\lambda, 1}\cdots dy_{\lambda, n} \label{equation:7} \\
\leq & (1+\varepsilon)2n  \sum_{\lambda} \int \eta_{\lambda}|\widetilde{v_k}(y)|^2 dv_Xdy_{\lambda, 1}\cdots dy_{\lambda, n} \nonumber
\end{align}
for sufficiently large $k$.  
Now we denote the harmonic oscillator on $\mathbb{R}^n$ by $L = - \sum_{j=1}^n\frac{\partial^2}{\partial y_j^2} + 4 |y|^2$.  
Let $H_{p}(t) = e^{t^2/2}\frac{d^p}{dt^p} e^{-t^2}$ and 
let $H_{2, p}(t) = H_{p}(\sqrt{2}t)$.  
Then $L \prod_{j=1}^n H_{2, p_j}(y_j) = \sum_{j=1}^n 2(2p_j+1) \prod_{j=1}^n H_{2, p_j}(y_j)$, 
and $\{\prod_{j=1}^n H_{2, p_j}(y_j)\}_{p_1, \ldots, p_n \in \{0\} \cup \mathbb{N}}$ is an orthogonal basis of $L^2(\mathbb{R}^n)$.    
We consider $e^{-|y|^2}$ as a function on the entire tangent space $TX$.   
We take a decomposition $\widetilde{v_k} = \widetilde{v}_{k, 1} + \widetilde{v}_{k, 2}$, where 
\[
\widetilde{v}_{k, 1} = \Pi^{*} a_k e^{-|y|^2}, \quad  a_k(x_{\lambda}) = \left(\frac{2}{\pi}\right)^{n/2}  \int_{\mathbb{R}^n}\widetilde{v}_{k}(x_{\lambda}, y_{\lambda}) e^{-|y|^2} dy_{\lambda, 1} \cdots dy_{\lambda, n} 
\]
for $x_{\lambda} \in \Omega_{\lambda}$.  
Since $dy_{\lambda, 1} \cdots dy_{n, \lambda}$ is equal to $dy_{\lambda', 1} \cdots dy_{\lambda', n}$ as a measure on $T_{x}X$ for $x \in \Omega_{\lambda} \cap \Omega_{\lambda'}$, 
the definition of $a_k(x_{\lambda})$ does not depend on $\lambda$. 
Hence $a_k$ is a smooth function on $X$.  
We have 
$
\int \eta_{\lambda} \widetilde{v}_{k, 1} \overline{\widetilde{v}_{k, 2}} dv_Xdy_{\lambda, 1} \cdots dy_{\lambda, n} = 0.   
$
For any $x \in \Omega_{\lambda} \cap \Omega_{\lambda'}$, 
we have $\sum_{j=1}^n \frac{\partial^2}{\partial y_{\lambda, j}^2} = \sum_{j=1}^n \frac{\partial^2}{\partial y_{\lambda', j}^2}$ on $T_xX$, 
and we may consider that $L$ is a differential operator on $TX$. 
Then 
\[
\int \eta_{\lambda} L\widetilde{v}_{k, 1} \overline{\widetilde{v}_{k, 1}} dv_X dy_{\lambda, 1} \cdots dy_{\lambda, n} = 2n \int \eta_{\lambda} |\widetilde{v}_{k, 1}|^2 dv_X dy_{\lambda, 1} \cdots dy_{\lambda, n}, 
\]
\[
\int \eta_{\lambda} L\widetilde{v}_{k, 1} \overline{\widetilde{v}_{k, 2}} dv_X dy_{\lambda, 1} \cdots dy_{\lambda, n} = \int \eta_{\lambda} \widetilde{v}_{k, 1} L \overline{\widetilde{v}_{k, 2}} dv_X dy_{\lambda, 1} \cdots dy_{\lambda, n}= 0, 
\]
\[
\int \eta_{\lambda} L\widetilde{v}_{k, 2} \overline{\widetilde{v}_{k, 2}} dv_X dy_{\lambda, 1} \cdots dy_{\lambda, n} \geq (2n+4)\int \eta_{\lambda} |\widetilde{v}_{k, 2}|^2 dv_X dy_{\lambda, 1} \cdots dy_{\lambda, n}. 
\] 
Then (\ref{equation:7}) shows 
\begin{align*}
& \sum_{\lambda} \int \eta_{\lambda}( 2n|\widetilde{v}_{k, 1}|^2  
+ (2n+4)  |\widetilde{v}_{k, 2}|^2) dv_X dy_{\lambda, 1} \cdots dy_{\lambda, n} 
\leq
\sum_{\lambda}\int \eta_{\lambda} L \widetilde{v}_{k} \overline{\widetilde{v}_{k}} dv_X dy_{\lambda, 1}\cdots dy_{\lambda, n}  \\
\leq & (1 + \varepsilon) 2n \sum_{\lambda}\int \eta_{\lambda}(|\widetilde{v}_{k, 1}|^2 + |\widetilde{v}_{k, 2}|^2) dv_X dy_{\lambda, 1}\cdots dy_{\lambda, n}, 
\end{align*}
and 
\[
4 \sum_{\lambda}\int \eta_{\lambda}|\widetilde{v}_{k, 2}|^2 dv_X dy_{\lambda, 1}\cdots dy_{\lambda, n} \leq \varepsilon 2n \sum_{\lambda}\int \eta_{\lambda}|\widetilde{v_k}|^2 dv_X dy_{\lambda, 1}\cdots dy_{\lambda, n}  
\]
for sufficiently large $k$.  
The condition $(\star 1)$ gives $\sum_{\lambda}\int \eta_{\lambda}|\widetilde{v_k}|^2 dv_X dy_{\lambda, 1} \cdots dy_{\lambda, n} = O(1)$, 
and $\sum_{\lambda}\int \eta_{\lambda}|\widetilde{v}_{k, 2}|^2 dv_X dy_{\lambda, 1} \cdots dy_{\lambda, n} = O(1)\varepsilon$.  
We have 
\begin{align*}
& k^{n/2}\| f_k - (\Pi^*a_k s^{k})\|^2_{V'_{k}, h^k} \leq k^{n/2}\| \chi_k (f_k - (\Pi^*a_k s^{k}))\|^2_{h^k}  \\
\lesssim & k^{n/2} \sum_{\lambda}\int \eta_{\lambda}|\chi_k (u_{k}-a_k(x))|^2 e^{-2k|y|^2} dv_X dy_{\lambda, 1} \cdots dy_{\lambda, n} \\
\lesssim & k^{n/2} \sum_{\lambda} \int \eta_{\lambda}(|\chi_k u_{k} - a_k(x)|^2  + (1-\chi_k)^2 |a_k(x)|^2)e^{-2k|y|^2} dv_X dy_{\lambda, 1} \cdots dy_{n, \lambda}  \\
\lesssim & \sum_{\lambda}\int \eta_{\lambda}|\widetilde{v}_{k, 2}|^2 dv_X dy_{\lambda, 1} \cdots dy_{\lambda, n} + \int_{X} |a_k(x)|^2 dv_{X} \int_{\sqrt{k}r_k/2}^{\infty} e^{-2|y|^2} dy_1 \cdots dy_n  \\
=& O(1)\varepsilon  
\end{align*}
for sufficiently large $k$.  
In the last equality, we used 
\begin{align*}
& \int_{X} |a_k(x)|^2 dv_X = \sum_{\lambda} \left(\frac{2}{\pi} \right)^{n/2}\int \eta_{\lambda}|a_k(x)|^2 e^{-2|y|^2}dy_1 \cdots dy_n dv_X \\
\leq & \sum_{\lambda} \int \eta_{\lambda} |\widetilde{v}_{k}|^2 dy_{\lambda, 1}\cdots dy_{\lambda, n} dv_{X} =O(1). 
\end{align*}
This completes the proof of Theorem~\ref{theorem:3} in the case where $\liminf_{k \to \infty}\int_X |f_{k}|^2dv_X > 0$.  
\begin{lemma}\label{lemma:6}
Assume that $M$ is a Stein manifold and $Ric(\omega) \geq -C \omega$ for some $C > 0$.  
Let $f \in C^{\infty}(X)$ such that $\int_X |f|^2 dv_X \neq 0$.  
We take a holomorphic sections $F_k \in \Gamma(M, L^k)$ as in Proposition~\ref{proposition:-2}.  
Let $(r_k)_{k \in \mathbb{N}}$ be a sequence of positive numbers as in Theorem~\ref{theorem:3}.  
Then $(F_k|_{V_k})_{k \in \mathbb{N}}$ satisfies the condition $(\star)$.  
\end{lemma}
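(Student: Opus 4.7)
The plan is to verify the two conditions $(\star 1)$ and $(\star 2)$ in turn, using the explicit decomposition $F_k = \alpha_k - \beta_k$ from Proposition~\ref{proposition:-2}, in which $\alpha_k = \kappa \tilde{f} s^k$ and $\|\beta_k\|^2_{h^k} = O(k^{-m})$ for every $m \in \mathbb{N}$.

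For $(\star 1)$, it is enough to bound $\|\alpha_k\|^2_{V_k, h^k}$. In the local coordinates of Section~\ref{section:2}, $\varphi(x, y) = 2|y|^2 + O(|y|^3)$ and $\omega^n/n! = (1 + O(|y|))\, dv_X\, dy_1 \cdots dy_n$, so a direct Gaussian estimate gives
\[
\|\alpha_k\|^2_{V_k, h^k} \lesssim \int_X |f|^2\, dv_X \int_{\mathbb{R}^n} e^{-k|y|^2}\, dy_1 \cdots dy_n = O(k^{-n/2}),
\]
and hence $\|F_k\|^2_{V_k, h^k} = O(k^{-n/2})$.

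For $(\star 2)$, I first establish a matching lower bound. By Proposition~\ref{proposition:-2}, $F_k/\zeta^k \to f$ uniformly on $X$, so $\int_X |F_k|^2_{h^k}\, dv_X \to \int_X |f|^2\, dv_X > 0$. Applying Theorem~\ref{theorem:2} to the sequence $(F_k)_{k \in \mathbb{N}}$ on the neighborhoods $V_k$ then yields $\|F_k\|^2_{V_k, h^k} \geq c\, k^{-n/2}$ for some $c > 0$ and all large $k$. Writing $\beta_k = b_k s^k$ and $u_k = \kappa \tilde{f} - b_k$, we have $E^h u_k = E^h(\kappa \tilde{f}) - E^h b_k$, and it suffices to show $\int_{V_k} |E^h u_k|^2 e^{-k\varphi}\, \omega^n/n! = O(k^{-n/2})$; together with the lower bound, this gives the desired inequality $\leq \varepsilon k \|F_k\|^2_{V_k, h^k}$ for all $k \geq C/(c\varepsilon)$.

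The term $\int |E^h(\kappa \tilde{f})|^2 e^{-k\varphi}\, \omega^n/n!$ is again $O(k^{-n/2})$ by the same Gaussian estimate, since $E^h(\kappa \tilde{f})$ is smooth and bounded independently of $k$. To control $\int |E^h b_k|^2 e^{-k\varphi}\, \omega^n/n!$, I plan to extend the Hörmander-type pointwise argument used in the proof of Proposition~\ref{proposition:-2}: applying Lemma~15.1.8 of \cite{Hor} to $E^h(b_k \xi^k)$ rather than to $b_k \xi^k$ itself, and using the identity $\overline{\partial}(b_k \xi^k)\, s_0^k = \overline{\partial} \alpha_k$ together with the infinite-order vanishing of $\overline{\partial}\tilde{f}$ and $\overline{\partial}\xi$ on $X$, should give $|E^h b_k|^2 = O(k^{-m})$ uniformly on a neighborhood of $X$ for every $m$ (the extra factor of $k^2$ produced by estimating one more derivative on a ball of radius $1/k$ is absorbed by increasing $m$). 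The main obstacle is precisely this derivative version of the pointwise Hörmander estimate, where one must track carefully the powers of $k$ introduced by the small radius and by the connection terms; once it is in place, the remaining steps reduce to the Gaussian integration above and the lower bound furnished by Theorem~\ref{theorem:2}.
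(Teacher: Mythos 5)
Your verification of $(\star 1)$ and the overall framework for $(\star 2)$ --- bound $\int_{V_k}|E^h(\kappa\widetilde{f} - b_k)|^2 e^{-k\varphi}\frac{\omega^n}{n!}$ by $O(k^{-n/2})$ and combine with the lower bound $\|F_k\|^2_{V_k,h^k} \gtrsim k^{-n/2}$ supplied by Theorem~\ref{theorem:2} --- are sound and match the paper. The genuine gap is your proposed route to $\int_{V_k}|E^h b_k|^2 e^{-k\varphi}\frac{\omega^n}{n!}$. You suggest applying H\"ormander's Lemma~15.1.8 to $u = E^h(b_k\xi^k)$, but that lemma bounds $|u(p)|^2$ by a combination of $\sup|\overline{\partial}u|^2$ and $\int|u|^2$: the $L^2$-norm of the function itself appears on the right-hand side, so taking $u = E^h(b_k\xi^k)$ the estimate presupposes control of $\int|E^h(b_k\xi^k)|^2 e^{-k\varphi_0}$, essentially the very quantity you are trying to establish; the argument is circular. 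An actual pointwise bound on the derivative requires a different mechanism --- for instance Cauchy estimates for the holomorphic part together with separate control of the correction solving $\overline{\partial}v = \overline{\partial}(b_k\xi^k)$, or an interior elliptic estimate for $\overline{\partial}$ combined with Sobolev embedding --- and carrying this out with constants tracked on balls of radius $O(1/k)$ and in the weight $e^{-k\varphi}$ is the nontrivial step; you flag it as the obstacle but leave it open.

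The paper avoids pointwise estimates entirely. From $\overline{\partial}b_k = \overline{\partial}(\kappa\widetilde{f}) + k(\kappa\widetilde{f}-b_k)\frac{\overline{\partial}\xi}{\xi}$ it first gets $\int_{V_k}|\overline{\partial}b_k|^2_\omega e^{-k\varphi}\frac{\omega^n}{n!} = O(k^{-m})$ for every $m$, then integrates $\int\kappa^2 e^{-k\varphi}\partial b_k\wedge\overline{\partial b_k}\wedge\omega^{n-1}$ by parts to obtain a Caccioppoli-type inequality
\[
\int_{V_k}|\partial b_k|^2_\omega e^{-k\varphi}\frac{\omega^n}{n!} \lesssim k^2\|\beta_k\|^2_{h^k} + \int\kappa^2|\partial\overline{\partial}b_k|^2_\omega e^{-k\varphi}\omega^n,
\]
with the second term again $O(k^{-m})$ by differentiating the formula for $\overline{\partial}b_k$. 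This yields $\int_{V_k}|db_k|^2_\omega e^{-k\varphi}\frac{\omega^n}{n!} = O(k^{-m})$ with no elliptic bootstrapping, and $|E^h b_k|^2 \lesssim |db_k|^2_\omega$ finishes. You should either adopt this integral argument or fully carry out a derivative estimate of the Cauchy/elliptic type; the naive application of Lemma~15.1.8 to the derivative will not close.
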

\begin{proof}
We use the same notation as in the proof of Proposition~\ref{proposition:-2}. 
That is,  
$U$ is a sufficiently small neighborhood of $X$, 
$\widetilde{f} \in C^{\infty}(U)$ is an almost holomorphic extension of $f$ along $X$, 
and $\xi \in C^{\infty}(U)$ is an almost holomorphic extension of $\frac{\zeta}{s_0}$ along $X$.  
Also $\kappa \in C^{\infty}_0(U)$ is a smooth function such that $\kappa = 1$ on a neighborhood of $X$.  
We define $\alpha_k = \kappa \widetilde{f} s^k$.  
Let 
$\beta_k = b_k s^k$ which is a solution of $\overline{\partial} \beta_k = \overline{\partial}\alpha_k$ such that $\|\beta_k\|^2 = O(k^{-m})$ for any $m \in \mathbb{N}$.  
Then $F_k = \alpha_k -\beta_k = (\kappa \widetilde{f} -b_k)s^k$.  
As $U$ is sufficiently small neighborhood of $X$, we have 
\[
\|\alpha_k\|^2_{h^k} = \int |\kappa \widetilde{f}|^2e^{-k \varphi} \frac{\omega^n}{n!} \lesssim \int \kappa^2 e^{-kd_{X}(z)^2} \frac{\omega^n}{n!} = O(k^{-n/2}).  
\]
Hence $\|F_{k}\|^2_{V_k} = \|\alpha_k - \beta_{k}\|^2_{V_k} \lesssim \|\alpha_k\|^2 + \|\beta_k\|^2 \lesssim k^{-n/2}$, 
and $(F_k|_{V_k})_{k \in \mathbb{N}}$ satisfies 
the condition $(\star1)$.  

We have  $\overline{\partial} b_k = \overline{\partial} (\kappa \widetilde{f}) + k(\kappa \widetilde{f} -b_k)\frac{\overline{\partial} \xi}{\xi}$, 
and  
\begin{align*}
\int_{V_k} |\overline{\partial} b_k|^2_{\omega} e^{-k\varphi} \frac{\omega^n}{n!} 
\lesssim \int_{V_k} (|\widetilde{f}\overline{\partial} \kappa|^2_{\omega} + |\kappa \overline{\partial}\widetilde{f}|^2_{\omega} + k^2 |\widetilde{f} \overline{\partial} \xi|^2_{\omega} + k^2 |b_k \overline{\partial} \xi|^2_{\omega})e^{-k\varphi} \frac{\omega^n}{n!} 
=O(k^{-m})
\end{align*}
for any $m \in \mathbb{N}$ since $\int_{\mathbb{R}^n} |y|^{2m-n}e^{-2k|y|^2}dy_1 \cdots dy_n = O(k^{-m})$ and $\|\beta_k\|^2 = O(k^{-m-2})$.  
The Cauchy-Schwarz inequality implies 
\begin{align*}
& \int \kappa^2  e^{-k\varphi} \partial b_k \wedge \overline{\partial} \overline{b_k} \wedge \omega^{n-1} 
= -\int b_k \partial \left(\kappa^2 e^{-k\varphi}\overline{\partial} \overline{b_k}  \right) \wedge \omega^{n-1} \\
= & -\int b_k e^{-k\varphi}\left(2 \kappa \partial \kappa \wedge \overline{\partial b_k} - k \kappa^2 \partial \varphi \wedge \overline{\partial b_k} + \kappa^2  \partial \overline{\partial b_k} \right)\wedge \omega^{n-1} \\
\leq & \frac{1}{4} \int \kappa^2 e^{-k\varphi} \partial b_k \wedge \overline{\partial b_k} \wedge \omega^{n-1} + 4 \int |b_k|^2 e^{-k\varphi} \partial \kappa \wedge \overline{\partial \kappa} \wedge \omega^{n-1} + \frac{1}{4} \int \kappa^2 e^{-k\varphi} \partial b_k \wedge \overline{\partial b_k} \wedge \omega^{n-1}\\
& +  k^2 \int \kappa^2 e^{-k\varphi} |b_k|^2 \partial \varphi \wedge \overline{\partial \varphi} \wedge \omega^{n-1} 
+ C\int \kappa^2 |b_k|^2 e^{-k\varphi}\omega^n + C \int \kappa^2  |\partial \overline{\partial b_k}|^2_{\omega} e^{-k\varphi} \omega^{n} 
\end{align*}
for some $C > 0$.  
Hence 
\begin{align*}
& \int_{V_k} |\partial b_k|^2 e^{-k\varphi} \frac{\omega^n}{n!} 
\lesssim \int \kappa^2 e^{-k\varphi} \partial b_k \wedge \overline{\partial b_k} \wedge \omega^{n-1} 
\lesssim 
k^2\|\beta_k\|^2_{h^k} + \int \kappa^2  |\partial \overline{\partial b_k}|^2_{\omega} e^{-k\varphi} \omega^{n}.      
\end{align*}
The last term is estimated by 
\[
\int \kappa^2 |\partial \overline{\partial b_k}|^2_{\omega} e^{-k\varphi} \omega^n 
\lesssim \int \left(|\partial \overline{\partial}(\kappa \widetilde{f})|^2_{\omega} + k^2 |\partial ((\kappa \widetilde{f} - b_k)\overline{\partial} \xi)|^2_{\omega}\right) e^{-k\varphi} \omega^n = O(k^{-m}).  
\]
This shows that $\int_{V_k} |\partial b_k|^2_{\omega} e^{-k\varphi} \frac{\omega^n}{n!} = O(k^{-m})$,  and 
$\int_{V_k} |d b_k|^2_{\omega} e^{-k\varphi} \frac{\omega^n}{n!} = O(k^{-m})$.  
We have 
\[
\int_{V_k} |d(\kappa \widetilde{f})|^2_{\omega} e^{-k\varphi} \lesssim \int_{V_k} e^{-k\varphi} = O(k^{-n/2}) 
\]
and $\int_{V_k} |d(\kappa \widetilde{f} - b_k)|^2_{\omega} e^{-k\varphi} = O(k^{-m}) + O(k^{-n/2}) = O(k^{-n/2})$.  
Theorem~\ref{theorem:2} shows  \\
$\liminf_{k \to \infty}\left(\frac{2k}{\pi}\right)^{n/2}\|F_k\|^2_{V_k, h^k} \geq \int_X |f|^2 dv_X > 0$.   
Hence 
\[
\lim_{k \to \infty}\frac{\int_{V_k} |d(\kappa \widetilde{f} - b_k)|^2_{\omega} e^{-k\varphi}\frac{\omega^n}{n!}}{k\|F_k\|^2_{V_k, h^k}} = 0.  
\]
Let $E$ be any smooth vector field on $X$. 
Since $|E^h (\kappa \widetilde{f} -b_k)|^2 \lesssim |d(\kappa \widetilde{f} - b_k)|^2_{\omega}$ on $V_k$, 
it follows that $(F_k|_{V_k})_{k \in \mathbb{N}}$ satisfies $(\star 2)$.  
\end{proof}

\begin{lemma}\label{lemma:7}
Under the same assumption as in Theorem~\ref{theorem:3}, there exists $c > 0$ such that 
$\liminf_{k \to \infty} \int_X |f_k -c\zeta^k|^2_{h^k}dv_X > 0$.   
\end{lemma}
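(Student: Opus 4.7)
The plan is to establish a uniform upper bound $\int_{X}|f_{k}|^{2}_{h^{k}}\,dv_{X}\leq C_{0}$ and then pick $c$ large enough that the Minkowski inequality forces $\|f_{k}-c\zeta^{k}\|_{L^{2}(X,h^{k})}$ to stay bounded away from zero. The key leverage is that $|\zeta^{k}|_{h^{k}}=1$ on $X$, so $\|c\zeta^{k}\|_{L^{2}(X,h^{k})}=c\bigl(\int_{X}dv_{X}\bigr)^{1/2}$ grows linearly in $c$, while the bound on $f_{k}$ is fixed.

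For the uniform bound, I would invoke the standard submean-value inequality for holomorphic sections of the positive line bundle $(L^{k},h^{k})$. After the rescaling $\tilde{y}=\sqrt{k}\,y$ in the fibre directions, the weight $e^{-k\varphi}$ becomes essentially $e^{-2|\tilde{y}|^{2}}$ in view of $\varphi(x,y)=2|y|^{2}+O(|y|^{3})$ (Section~\ref{section:2}), since $k\cdot O(|y|^{3})=O(k^{-1/2})$ on the ball $|y|\leq k^{-1/2}$. Unrolling to the original variables, the classical submean-value property in the model Bargmann--Fock space gives
\[
|f_{k}(p)|^{2}_{h^{k}}\ \lesssim\ k^{n}\int_{B(p,k^{-1/2})}|f_{k}|^{2}_{h^{k}}\,\frac{\omega^{n}}{n!}
\]
uniformly for $p\in X$; the ball $B(p,k^{-1/2})$ is contained in $V_{k}$ for large $k$ because $kr_{k}^{2}\to\infty$. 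Integrating in $p$ over $X$ and swapping the order of integration via Fubini --- using that $X\cap B(z,k^{-1/2})$ has $n$-dimensional volume at most $Ck^{-n/2}$ --- yields
\[
\int_{X}|f_{k}|^{2}_{h^{k}}\,dv_{X}\ \lesssim\ k^{n}\cdot k^{-n/2}\|f_{k}\|^{2}_{V_{k},h^{k}}\ =\ O(1)
\]
by condition $(\star 1)$.

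With this uniform bound $C_{0}$ in hand, Minkowski's inequality gives
\[
\Bigl(\int_{X}|f_{k}-c\zeta^{k}|^{2}_{h^{k}}\,dv_{X}\Bigr)^{1/2}\ \geq\ c\Bigl(\int_{X}dv_{X}\Bigr)^{1/2}\ -\ \sqrt{C_{0}},
\]
and choosing any $c>2\sqrt{C_{0}/\int_{X}dv_{X}}$ makes the right-hand side a fixed positive constant independent of $k$, from which the claimed $\liminf$ statement follows. The main obstacle is Step~1: although the submean-value estimate is standard in semiclassical Bergman-kernel analysis and closely related to the pointwise bound used in the proof of Proposition~\ref{proposition:-2}, one must set it up at the correct scale $k^{-1/2}$ and verify that the cubic remainder in $\varphi$ contributes only an $O(1)$ multiplicative factor in the weight after rescaling.
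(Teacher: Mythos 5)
Your argument is correct, but it takes a genuinely different route from the paper's. The paper proves the lemma by \emph{contradiction} via Theorem~\ref{theorem:2}: if $\liminf_k \int_X |f_k - c\zeta^k|^2_{h^k}\,dv_X = 0$, then along a subsequence $\int_X |f_{j_k}|^2_{h^{j_k}}\,dv_X \gtrsim c^2\,\mathrm{vol}(X)$, and the micro-local lower bound of Theorem~\ref{theorem:2} (in its ``Remark'' form for subsequences) forces $\|f_{j_k}\|^2_{V_{j_k},h^{j_k}} \gtrsim c^2 j_k^{-n/2}$, which violates $(\star 1)$ once $c$ is large. Your argument instead establishes the complementary \emph{upper} bound $\int_X |f_k|^2_{h^k}\,dv_X = O(1)$ directly, and then reads off the conclusion from the triangle inequality; this is shorter and does not require the subsequence bookkeeping or Theorem~\ref{theorem:2}. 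The price you pay is that you need the sub-mean-value inequality $|f_k(p)|^2_{h^k} \lesssim k^n \int_{B(p,k^{-1/2})}|f_k|^2_{h^k}\,\omega^n/n!$ uniformly in $p \in X$, which is not formally proved anywhere in the paper.

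That inequality is standard, but one point in your sketch deserves care: you motivate it using the weight $\varphi = -\log|s|^2 = 2|y|^2 + O(|y|^3)$, which indeed has vanishing gradient on $X$ and so oscillates by $O(1)$ under $k\varphi$ on a ball of radius $k^{-1/2}$; however $u_k = f_k/s^k$ is only \emph{almost} holomorphic (since $s$ is not holomorphic), so one cannot apply the plurisubharmonic mean-value inequality to $|u_k|^2$ as stated. The clean way to run the argument is to pick, near each $p\in X$, a genuinely holomorphic local frame $e$ of $L$ with potential $\phi = -\log|e|^2_h$ normalized so that $\phi(p)=0$ and $d\phi(p)=0$ (always achievable by multiplying a holomorphic frame by $e^{a+b\cdot z}$); then $f_k = g_k e^k$ with $g_k$ holomorphic, $|f_k|^2_{h^k} = |g_k|^2 e^{-k\phi}$, and $k\phi = O(1)$ on $B(p,k^{-1/2})$, so the mean-value property of $|g_k|^2$ gives exactly what you want. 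The Fubini step and the bound $\mathrm{vol}_X(X\cap B(z,k^{-1/2})) \lesssim k^{-n/2}$ are fine, as is the final Minkowski inequality. So the proposal is sound, but it rests on a lemma that the paper deliberately avoids needing to prove; the paper's contradiction argument reuses machinery already in place.
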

\begin{proof}
Let $C > 0$ such that $\|f_k\|_{V_k, h^k}^2 \leq C k^{-n/2}$.  
Let $c$ be a large positive number which will be determined later.  
Assume that $\liminf_{k \to \infty} \int_X |f_k -c \zeta^k|_{h^k}^2 dv_{X} = 0$.  
Then there exists a sequence of strictly increasing positive numbers $(j_k)_{k \in \mathbb{N}}$ such that 
$\lim_{k \to \infty} \int_X |f_{j_k} - c \zeta^{j_k}|^2_{h^{j_k}} dv_X = 0$.  
By the remark after the proof of Theorem~\ref{theorem:2}, we have 
\begin{align*}
& \liminf_{k \to \infty}\left(\frac{2j_k}{\pi}\right)^{n/2} \|f_{j_k}\|^2_{V_{j_{k}}, h^{j_k}} \geq \liminf_{k \to \infty} \int_X |f_{j_k}|^2_{h^{j_k}}dv_X\\ 
\geq & \liminf_{k \to \infty} \int_X \left(\frac{1}{2}c^2 - |f_{j_k} - c \zeta^{j_k}|^2_{h^{j_k}}  \right) dv_X 
\geq \frac{c^2}{2} \int_X dv_X.   
\end{align*}
Hence, if we choose $c > 0$ such that $C \left(\frac{2}{\pi}\right)^{n/2} < \frac{c^2}{2} \int_X dv_X$, we have $\liminf_{k \to \infty} \int_X |f_k -c\zeta^k|_{h^{k}}dv_X > 0$.  
\end{proof}

We can now finish the proof of Theorem~\ref{theorem:3}.  
Let $(f_{k})_{k \in \mathbb{N}}$ be a sequence of holomorphic sections which satisfies $(\star)$.  
Since $V_k$ is sufficiently small neighborhood of $X$, we may assume that $M$ is Stein and 
$\mathrm{Ric}(\omega) \geq -C\omega$ for some $C > 0$.  
By Lemma~\ref{lemma:7}, there exists $c > 0$ such that $\liminf_{k \to \infty} \int_X |f_k-c \zeta^k|^2_{h^k}dv_X > 0$.  
By Proposition~\ref{proposition:2}, 
there exists a sequence of holomorphic sections $(F_k)_{k \in \mathbb{N}}$ such that $\lim_{k \to \infty}\frac{F_k}{\zeta^k} = -c$ uniformly on $X$.  
We have $(F_k)_{k \in \mathbb{N}}$ satisfies $(\star)$ by Lemma~\ref{lemma:6}.  
Then 
$\int_{X} \liminf_{k \to \infty} |F_k(x)|^2_{h^k}dv_X > 0$, and $\int_X \liminf_{k \to \infty} |f_k + F_k|^2_{h^k} dv_X > 0$.  
By the first part of the proof, there exist sequences $(a_k)_{k \in \mathbb{N}}$, $(\tilde{a}_{k})_{k \in \mathbb{N}}$ of smooth functions on $X$ such that $\lim_{k \to \infty} k^{n/2}\|F_k - \Pi^* a_k\|^2_{V_k', h^k} = \lim_{k \to \infty}k^{n/2}\|f_k + F_k -\Pi^* \tilde{a}_{k}\|^2_{V_k', h^k} = 0$.  
Hence 
\begin{align*}
\lim_{k \to \infty} k^{n/2} \|f_k - \Pi^*(\tilde{a}_k-a_k)\|^2_{V_k', h^k} 
\leq \lim_{k \to \infty} 2k^{n/2} (\|f_k + F_k -\Pi^*\tilde{a}\|^2_{V_k', h^k} + \|F_k - \Pi^* a_k\|^2_{V_k', h^k}) = 0.  
\end{align*}
This completes the proof of Theorem~\ref{theorem:3}.  \qed
 
\begin{remark}
Theorem~\ref{theorem:3} does not hold in the absence of $(\star 2)$.  
Let $c > 0$ be a constant such that the Bergman kernel satisfies 
$|K_{k}(z, w)| \lesssim k^n e^{-c \sqrt{k}  d_{M}(z, w)}$ (see \cite{Ma-Ma2}).  
Here $| \cdot |$ is the norm of $L^k \boxtimes \overline{L}^k$.    
Let $r_k = 4\frac{n\log k}{c\sqrt{k}}$ and $p_k \in M$ such that $d_{X}(p_k) = \frac{n\log k}{c\sqrt{k}}$.  
Put $V_k = \{z \in M\, |\, d_{X}(z) < r_k\}$ and $V'_k = \{z \in M\, |\, d_{X}(z) < r_k/2\}$.  
Let $u_{k} \in L^k$ be a unit element of $L^k$ at $p_k$. 
Let $\mathrm{pr}_{1}, \mathrm{pr}_{2}: X \times X \to X$ be the projection maps.  
Let $f_{k} \in \Gamma(M, L^k)$ such that $k^{-3n/4}K(z, p_{k}) = \mathrm{pr_{1}}^*f_{k}(z) \otimes \mathrm{pr_2}^*\overline{u_{k}}$.  
Then $k^{n/2}\|f_{k}\|^2_{h^k} = k^{-n}|K_{k}(p_k, p_k)| \sim \pi^{-n}$ as $k \to \infty$.  
Since $k^{-3n/4}|K_{k}(z, p_{k})| = O(k^{-3n/4})$ for $z \in M \setminus V_{k}'$, we have $k^{n/2}\|f_{k}\|_{V_{k}', h^k}^2 \sim \pi^n$ as $k \to \infty$.  
Assume that there exists a sequence $(a_k)_{k \in \mathbb{N}}$ of smooth functions on $X$ such that  $\lim_{k \to \infty}k^{n/2}\|f_k - \Pi^* a_k s^k\|^2_{V'_k} = 0$.  
Let $V_k'' = \{z \in M\, |\, d_{X}(z) < \frac{1}{c\sqrt{k}}\}$.  
Then $d_M(z, p_{k}) \geq  \frac{n\log k-1}{c\sqrt{k}}$ and $k^{-3n/4}|K_k(z, p_k)| = O(k^{-3n/4})$ for $z \in V''_k$.  
Hence $\lim_{k \to \infty}k^{n/2} \|f_k\|^2_{V''_{k}, h^k} = 0$, and 
\[
k^{n/2}\|\Pi^{*}a_k s^k\|^2_{V''_{k}, h^k} \lesssim k^{n/2}(\|f_k-\Pi^*a_k s^k\|^2_{V'_{k}, h^k} + \|f_k\|^2_{V''_k, h^k}) \to 0 \quad (k \to \infty).
\]
Since $k^{n/2}\|\Pi^* a_k s^k\|_{V'_k, h^k} \lesssim k^{n/2}\|\Pi^*a_k s^k\|^2_{V''_{k}, h^k}$, we have 
\[
\lim_{k \to \infty} k^{n/2}\|\Pi^*a_k s^k\|_{V_k', h^k} = 0. 
\] 
Then $k^{n/2}\|f_k\|^2_{V'_k, h^k} \lesssim k^{n/2}(\|f_k - \Pi^* a_k\|^2_{V'_k, h^k} + \|\Pi^* a_k s^k\|^2_{V'_k, h^k}) \to 0$ ($k \to \infty$), and this is a contradiction.  
\end{remark}

\section{Asymptotic expansion of $s_{f, k}$}\label{section:7}
The asymptotic expansions of $s_{f, k}$ and $\|s_{f, k}\|^2$ are proved by several approaches.  
In \cite{Bor-Pau-Uri} and \cite{Deb-Pao}, it is shown using the theory of Fourier integral operators; in \cite{Pao}, through the asymptotic expansion of the Szeg\"o kernel; and in \cite{Ioo}, via the asymptotic expansion of the Bergman kernel.
Here, we show the asymptotic expansion of $s_{f, k}$ and $\|s_{f, k}\|^2$ using the expansion formula of Bergman kernel.  
Our proof is similar to that of Theorem~1 in \cite{Pao}.  

We assume that $M$ is compact.  
We use the same notation as in Section~\ref{section:2}.  
That is, $U$ is a sufficiently small neighborhood of $X$ and $s$ is a smooth section of $L$ on $U$ such that $|s|_{h} = 1$ and $\overline{\partial} s$ vanishes to any order on $X$.  
Let $p \in X$.   
Let $V \subset U$ be a sufficiently small neighborhood of $p$.  
We take a smooth coordinate system $(x, y) = (x_1, \ldots, x_n, y_1, \ldots, y_n)$ around $p$ as in Section~\ref{section:2}.  
Furthermore, by taking a linear transformation of $(x_1, \ldots, x_n)$, we may assume that $J \frac{\partial}{\partial x_j} = \frac{\partial}{\partial y_j}$ at $p$ for $j=1, \ldots, n$. 
Let $V_{\mathbb{R}^{2n}}$ be an open neighborhood of $0$ in $\mathbb{R}^{2n}$ and 
let $\Xi: V_{\mathbb{R}^{2n}} \to V$ be a diffeomorphism which corresponds to the smooth coordinate system $(x, y)$.  
In the previous sections, we have not made a distinction between $g$ and $g \circ \Xi$ for a function $g$ on $V$. 
However, from now on, we will distinguish between the two since we need to treat several coordinate systems in the proof of the following theorem.  
\begin{theorem}\label{theorem:4}
There exist polynomials $c_{p, j}(y)$ in $y \in \mathbb{R}^n$ of degree $\leq j$ whose coefficients depend smoothly on $p \in X$ such that  
$c_{p, 2j+1} (0) = 0$ ($j = 0, 1, \ldots$), and the following holds:  
\begin{itemize}
\item[1.]
\[
\frac{s_{f, k}}{s^k} \circ \Xi (0, \ldots, 0, \frac{y_1}{\sqrt{k}}, \ldots, \frac{y_n}{\sqrt{k}})
= f(p) + \sum_{j=1}^N \frac{c_{p, j}(y)}{k^{j/2}} + o\left(\frac{1}{k^{N/2}}\right).  
\]
for any $N \in \mathbb{N}$.  
The convergence of the expansion is uniform for $y \in \mathbb{R}^n$ on any bounded set and $p \in X$.  
\item[2.] 
\[
 \left(\frac{\pi}{2k}\right)^{n/2}\|s_{f, k}\|^2_{h^k} = \int_{X}|f|^2 dv_{X} + \sum_{j=1}^{N} \frac{1}{k^{j}}\int_{X}  c_{p, 2j}(0) \overline{f(p)}dv_{X}(p) + o\left(\frac{1}{k^{N}}\right) 
\]
for any $N \in \mathbb{N}$
\end{itemize}
\end{theorem}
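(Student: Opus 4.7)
The plan is to substitute the near-diagonal asymptotic expansion of the Bergman kernel $K_k(z,w)$ into the defining integral for $s_{f,k}$ and perform a Laplace-type analysis in a $1/\sqrt{k}$-rescaled neighborhood of each point $p\in X$. Fix $p \in X$ and use the adapted coordinates $(x,y)$ and parametrization $\Xi : V_{\mathbb{R}^{2n}} \to V$ of the statement, so that $X \cap V = \Xi(\{y=0\})$ and $J\partial_{x_j}|_p = \partial_{y_j}|_p$. Relative to the frame $s(z)^k \otimes \overline{s(w)^k}$, the standard expansion (cf.\ \cite{Ma-Ma2}) produces a formula of the form
\[
K_k(z, w) = k^n\, s(z)^k \otimes \overline{s(w)^k}\, e^{-k\Psi(z, w)} \Bigl( b_0(z, w) + k^{-1} b_1(z, w) + \cdots + k^{-N} b_N(z, w)\Bigr) + O(k^{-\infty}),
\]
where $\Psi$ is smooth, $\Psi(z,z)=0$, $\operatorname{Re}\Psi(z,w)\gtrsim |z-w|^2$, and the $b_j$ are smooth.

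First I would localize the integral defining $s_{f,k}(z)$ to a fixed small ball around $p$, using the Gaussian bound $|K_k(z,w)|_{h^k}\lesssim k^n e^{-c\sqrt{k}\, d_M(z,w)}$ (as in the remark ending Section~\ref{section:6}) to make the exterior contribution $O(k^{-\infty})$ uniformly in $p$. Inside the ball I would substitute $z = \Xi(0, y/\sqrt{k})$, parametrize $w \in X\cap V$ by $x \in \mathbb{R}^n$ via $w = \Xi(x,0)$, and rescale $x \mapsto x/\sqrt{k}$; the factor $k^{-n/2}$ from $dv_X$ combines with the $(\pi/(2k))^{n/2}$ prefactor and the $k^n$ from the kernel to leave a clean constant after the Gaussian integral in $x$. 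I would then Taylor expand in $1/\sqrt{k}$ every factor of the integrand: $f \circ \Xi(x/\sqrt{k},0)$, the $b_j$ at the rescaled arguments, $\zeta^k/s^k$ on $X$ (which equals $1$ modulo contributions from $\overline{\partial}s$ that vanish to infinite order), the density ratio between $\omega^n/n!$ and the Euclidean measure, and the cubic-and-higher corrections in $k\Psi$ (of order $O(k^{-1/2})$ after rescaling), and then collect powers of $k^{-1/2}$.

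Matching coefficients in the resulting series yields the expansion of Part~1, in which each $c_{p,j}(y)$ is a polynomial in $y$ of degree $\leq j$ whose coefficients are universal polynomial expressions in the Taylor data at $p$ of $f$, $\omega$, $s$, and the $b_j$; smoothness in $p$ is then automatic. The $j=0$ term reduces to $f(p)$ because $c_{p,0}$ is the normalized Gaussian average of $b_0(p,p)f(p)$, and the standard normalization of $K_k$ precisely supplies the reciprocal of the Gaussian integral. The parity claim $c_{p,2j+1}(0)=0$ is the key structural point: at $y=0$ the odd-order coefficient is an integral over $\mathbb{R}^n$ of a polynomial in $x$ of odd total degree against the symmetric limit Gaussian, which vanishes by $x\mapsto -x$.

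Part~2 then follows by applying the reproducing identity of Proposition~\ref{proposition:-1} with $g = s_{f,k}$, giving
\[
\left(\tfrac{2k}{\pi}\right)^{n/2}\|s_{f,k}\|^2_{h^k} = \int_X \tfrac{s_{f,k}(x)}{\zeta^k(x)}\,\overline{f(x)}\,dv_X(x),
\]
and substituting the $y=0$ specialization of Part~1 (noting $\zeta=s$ on $X$). The odd half-integer powers of $k^{-1}$ disappear by $c_{p,2j+1}(0)=0$, leaving the claimed series. The main obstacle is the order-by-order bookkeeping of the Laplace expansion: controlling the remainder uniformly in $p$ and in $y$ on bounded sets, verifying that each contribution from $\overline{\partial}s$ or from the non-flat volume element is absorbed into the appropriate higher $c_{p,j}$, and confirming the parity cancellation at $y=0$ only after all coordinate corrections are accounted for. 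No essentially new idea is needed beyond \cite{Pao} and \cite{Ioo}, but reorganizing the expansion around points of the Lagrangian $X$ rather than around a generic point of $M$ is what this proof implements.
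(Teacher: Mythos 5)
Your proposal is correct and takes essentially the same approach as the paper: localize via the off-diagonal Gaussian decay of $K_k$ (Ma--Marinescu), insert the near-diagonal Bergman kernel expansion, perform the $1/\sqrt{k}$ rescaling in both the target point $z=\Xi(0,y/\sqrt{k})$ and the integration variable $x$, Taylor expand all factors, read off the coefficients $c_{p,j}(y)$ as Gaussian integrals of polynomials $Q_{p,j}(y,x)$, observe the parity cancellation at $y=0$, and obtain Part~2 from the reproducing identity. The one place where the paper is more careful than your write-up is the passage from a holomorphic frame to the non-holomorphic frame $s$: the Berman--Berndtsson--Sj\"ostrand expansion is stated relative to a holomorphic frame $t$, so the paper first writes the kernel as $\frac{k^n}{\pi^n}e^{k\psi(z,\bar w)}(\cdots)$ in the frame $t^k\boxtimes\overline{t^k}$ and only then introduces $c(z)=s(\Upsilon(z))/t(\Upsilon(z))$ and the shifted phase $\tilde\psi=\psi-\log c(z)-\log\overline{c(w)}$ to pass to the frame $s^k\boxtimes\overline{s^k}$; your proposal writes the expansion directly in terms of $s$, which needs this intermediate step to be justified (it is harmless precisely because $\overline\partial s$ vanishes to infinite order on $X$, which you do note). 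Otherwise the two arguments coincide.
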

\begin{proof}
Let $V_{\mathbb{C}^n}$ be a small open neighborhood of $0$ in $\mathbb{C}^n$.  
By shrinking  $V \subset M$ if necessary, we take a biholomorphic map $\Upsilon: V_{\mathbb{C}^n} \to V$ such that $\Upsilon(0) = p$ and $x_{j} \circ \Xi^{-1} \circ \Upsilon (z) = \mathrm{Re}\, z_j + O(|z|^2)$, 
$y_j \circ \Xi^{-1}\circ \Upsilon (z) = \mathrm{Im}\,z_j +O(|z|^2)$ for $z=(z_1, \ldots, z_n) \in V_{\mathbb{C}^n}$.  
We take a holomorphic section $t \in \Gamma(V, L)$ such that $t(p) = s(p)$.  
Then $\phi := -\log |t|^2$ is a smooth plurisubharmonic function on $V$ 
such that $\frac{i}{2} \partial \overline{\partial} \phi = \omega$.  
Let $\psi(z, \bar{w})$ ($z, w \in V_{\mathbb{C}^n}$) be a smooth function such that $\overline{\psi(z, \overline{w})} = \psi(\overline{w}, z)$, 
$\psi (z, \bar{z}) = \phi \circ \Upsilon(z)$, and $\overline{\partial} \psi$ vanishes to infinite order at $(z, \overline{z})$ ($z \in V_{\mathbb{C}^n}$).  
The Bergman kernel $K_{k}$ is a smooth section of $L^k\boxtimes \overline{L}^k$ over $M \times M$.  
Let $\pi_{j}: M \times M \to M$ be the $i$-th projection ($i = 1, 2$).  
We denote the function 
$K_{k}/\pi_1^* t^k \pi_2^{*} \overline{t^k}$ by $K_k/t^k \overline{t^k}$ for simplicity.  
Then the Bergman kernel has the following expansion formula (see e.g., \cite{Ber-Ber-Sjo}): 
\begin{align*}
& \left| \frac{K_k(\Upsilon(z), \Upsilon(w))}{t^{k}(\Upsilon(z))\overline{t^k(\Upsilon(w))}} - \frac{k^n}{\pi^n} e^{k\psi(z, \bar{w})}\left(1 + \frac{b_1(z, \bar{w})}{k} + \cdots + \frac{b_{\lceil N/2 \rceil}(z, \bar{w})}{k^{\lceil N/2 \rceil}}\right) \right| \\
\leq & e^{k\frac{\phi \circ \Upsilon(z)}{2}+ k\frac{\phi\circ\Upsilon(w)}{2}}O(\frac{1}{k^{\lceil N/2 \rceil +1-n}}), 
\end{align*}
on $V_{\mathbb{C}^n} \times V_{\mathbb{C}^n}$, 
where $b_j$ is a smooth function determined by the derivatives of $\psi$.  
Let $c(z) = \frac{s(\Upsilon(z))}{t(\Upsilon(z))}$ be a smooth function on $V_{\mathbb{C}^n}$.  
Then $c(0) = 1$, and $\overline{\partial} c$ vanishes to any order on $\Upsilon^{-1}(X)$.  
Let $\varphi = -\log |s|^2$.  
Then $\varphi \circ \Upsilon(z) = \phi \circ \Upsilon(z) - 2 \log |c(z)|$,  
Put $\tilde{\psi}(z, \bar{w}) = \psi(z, \bar{w}) - \log c(z) - \log \overline{c(w)}$.  
Here we take the branch of the logarithm such that $\log 1 = 0$.  
We have 
\begin{align*}
& \left| \frac{K_k(\Upsilon(z), \Upsilon(w))}{s^{k}(\Upsilon(z))\overline{s^k(\Upsilon(w))}} - \frac{k^n}{\pi^n}e^{k\tilde{\psi}(z, \bar{w})}\left(1 + \frac{b_1(z, \bar{w})}{k} + \cdots + \frac{b_{\lceil N/2 \rceil} (z, \bar{w})}{k^{\lceil N/2 \rceil}}\right) \right| \\
\leq & e^{k\frac{\varphi\circ \Upsilon(z)}{2}+ k\frac{\varphi\circ \Upsilon(w)}{2}}O(\frac{1}{k^{\lceil N/2 \rceil +1-n}}).   
\end{align*}
Since 
$\varphi \circ \Xi(x, y) = 2|y|^2 + O(|y|^3)$, we have 
$\varphi \circ \Upsilon(z) = 2|\mathrm{Im}\, z|^2 + O(|z|^3)$. 
Since $\overline{\partial} \tilde{\psi}$ vanishes to infinite order at $(0, 0)$ and $\tilde{\psi}(z, \bar{z}) = \varphi \circ \Upsilon(z)$, 
the Taylor expansion shows that 
$
\tilde{\psi}(z, \bar{w}) = \frac{1}{2}(-z^2 - \bar{w}^2 +2z\bar{w})  + O(|z|^3+|w|^{3}).   
$
We put 
\begin{align*}
\psi_{\Xi}((x, y), (x', y')) & = \tilde{\psi}(\Upsilon^{-1}\circ \Xi(x, y), \overline{\Upsilon^{-1} \circ \Xi (x', y')}), \\
b_{\Xi, j}((x, y), (x', y')) & = b_j(\Upsilon^{-1}\circ \Xi(x, y), \overline{\Upsilon^{-1} \circ \Xi (x', y')})  
\end{align*}
for $(x, y) \in V_{\mathbb{R}^n}$.
We have 
\begin{align}\label{equation:8}
k \psi_{\Xi}((0, \frac{y}{\sqrt{k}}), (x, 0)) 
= \frac{1}{2} y^2 - \frac{k}{2}x^2 + \sqrt{-1}\sqrt{k} xy + kR(\frac{y}{\sqrt{k}}, x) + k O(|\frac{|y|}{\sqrt{k}}|^{N+3} + |x|^{N+3}).  
\end{align}
Here $R(y, x)$ is a polynomial of $3 \leq \deg R \leq N+2$, and coefficients of $R$ depends smoothly on $p$. 
Let $\iota:\mathbb{R}^n \to \mathbb{R}^{2n}$ be a map such that $\iota(x) = (x, 0)$.  
Then $\Xi\circ\iota$ is a diffeomorphism from $\iota^{-1}(V_{\mathbb{R}^{2n}})$ to $X \cap V$.  
We consider $\Xi\circ\iota$ as a map from $\iota^{-1}(V_{\mathbb{R}^{2n}})$ to $X$, and 
we expand the pull back of 
a smooth density $f dv_{X}$ by $\Xi\circ\iota$ as 
$
(\Xi\circ\iota)^{*}(fdv_{X}) = \left(f(p) + \sum_{1 \leq |I| \leq N} a_{I} x^{I} + O(|x|^{N+1}) \right) dx_1 \cdots dx_n.  
$
Here $a_I \in \mathbb{C}$ is determined by the derivatives of $f$ and $g_{\omega}(\frac{\partial}{\partial x_i}, \frac{\partial}{\partial x_j})$ at $p$.  
It is known that the off-diagonal asymptotic of the Bergman kernel satisfies 
$|K_{k}(Z, Z')| \lesssim k^n e^{-c \sqrt{k}  d_{M}(Z, Z')}$ for some $c > 0$, where $d_M(Z, Z')$ is the distance between $Z \in M$ and $Z' \in M$ (see Theorem~0.1 of \cite{Ma-Ma2}).  
Here $| \cdot |$ is the norm of $L^k \boxtimes \overline{L}^k$ induced by the Hermitian metric $h$ of $L$.  
Let $c' > 0$ be a sufficiently small number. 
Let $\eta:\mathbb{R} \to \mathbb{R}$ be a smooth function such that $\eta(r) = 1$ if $0 \leq r \leq 1$ and $\eta(r) = 0$ if $r \geq 2$.  
We put $\eta_{k, N}(x) = \eta \left(\frac{c'\sqrt{k}}{(\lceil N/2 \rceil+n/2+1)\log k}|x|\right)$.  
Then 
\[
\left|
(1-\eta_{k, N}(x)) \frac{K_k(\Xi(0, \frac{y}{\sqrt{k}}), \Xi(x, 0))}{s^k(\Xi(0, \frac{y}{\sqrt{k}}))\overline{s^k(\Xi(x, 0))}}\right| 
= e^{\frac{k}{2} \varphi \circ \Xi(0, \frac{y}{\sqrt{k}})} O\left(\frac{1}{k^{\lceil N/2 \rceil - n/2 + 1}}\right), 
\]
and we have 
\begin{align*}
&s_{f, k} \circ \Xi(0, \ldots, 0, \frac{y_1}{\sqrt{k}}, \ldots, \frac{y_n}{\sqrt{k}}) \\
= & \left(\frac{\pi}{2k}\right)^{n/2} \int_{x \in \mathbb{R}^n} \eta_{k, N}(x) \frac{K_{k}(\Xi(0, \frac{y}{\sqrt{k}}), \Xi(x, 0))}{\overline{s^k(\Xi(x, 0))}}  \left(f(0) + \sum_{1 \leq |I| \leq N} a_{I} x^{I} + O(|x|^{N+1}) \right) dx_1 \cdots dx_n \\
& +  s^k \circ \Xi(0, \frac{y}{\sqrt{k}}) e^{\frac{k}{2}\varphi\circ \Xi(0, \frac{y}{\sqrt{k}})}O\left(\frac{1}{k^{\lceil N/2 \rceil +1}}\right).  
\end{align*}
For any $y \in \mathbb{R}^n$ on a bounded set and $x \in \iota^{-1}(V_{\mathbb{R}^{2n}})$,  we have 
$k\varphi\circ\Xi(0, \frac{y}{\sqrt{k}}) = 2 |y|^2 + O(\frac{|y|^3}{\sqrt{k}}) = O(1)$, $k \varphi \circ \Xi(x, 0) = 0$, 
and 
\begin{align*}
& \left(\frac{\pi}{2k}\right)^{n/2}
\frac{K_{k}(\Xi(0, \frac{y}{\sqrt{k}}), \Xi(x, 0))} {\overline{s^k \circ \Xi(x, 0)}} 
\left(f(p) + \sum_{1 \leq |I| \leq N} a_{I} x^{I} + O(|x|^{N+1}) \right) dx_1 \cdots dx_n  \\
= &  
s^k\circ \Xi(0, \frac{y}{\sqrt{k}})\left(\frac{k}{2\pi}\right)^{n/2} e^{\frac{y^2}{2} - k \frac{x^2}{2} + \sqrt{-1}\sqrt{k}xy}\left(1+kR(\frac{y}{\sqrt{k}}, x) + kO(\big|\frac{y}{\sqrt{k}}\big|^{N+3} + |x|^{N+3})
 \right)\\
& \times  
\left(1 + \sum_{j=1}^{\lceil N/2 \rceil} \frac{b_{\Xi, j}((0, \frac{y}{\sqrt{k}}), (x, 0)}{k^{j}} \right)  
\left(f(p) + \sum_{1 \leq |I| \leq N} a_{I} x^{I} + O(|x|^{N+1}) \right) dx_1 \cdots dx_n \\
& +  s^k\circ\Xi(0,\frac{y}{\sqrt{k}})O\left(\frac{1}{k^{\lceil N/2 \rceil - n/2 +1}}\right) dx_1\ldots dx_n.  
\end{align*}
By replacing $x$ with $\frac{x}{\sqrt{k}}$, we have 
\begin{align*} 
& \int_{x \in \mathbb{R}^n} \eta_{k, N}(x)
\frac{K_{k}(\Xi(0, \frac{y}{\sqrt{k}}), \Xi(x, 0))} {\overline{s^k \circ \Xi(x, 0)}} 
\left(f(p) + \sum_{1 \leq |I| \leq N} a_{I} x^{I} + O(|x|^{N+1}) \right) dx_1 \cdots dx_n  \\
= & s^k\circ \Xi(0, \frac{y}{\sqrt{k}})\frac{e^{\frac{y^2}{2}}}{(2\pi)^{n/2}} \int_{x \in \mathbb{R}^n} e^{-\frac{x^2}{2}+\sqrt{-1}xy} \left(f(p) + \sum_{j=1}^{N}\frac{Q_{p, j}(y, x)}{k^{j/2}} \right) dx_1\cdots dx_n \\
& +  s^k\circ\Xi(0, \frac{y}{\sqrt{k}}) o\left(\frac{1}{k^{N/2}}\right).   
\end{align*}
for $y \in \mathbb{R}^n$ on a bounded set.  
Here, $Q_{p, j}(y, x)$ is a polynomial of degree $\leq j$ in $y$ and $x$, which is determined by $R, b_{\Xi, j}$ and $a_I$.  
In each term of $Q_{p, 2j+1}(y, x)$, the total degree of $y$ and $x$ is odd.  
For mult-index $I = (i_1, \ldots, i_n)$ ($i_j \geq 0$), we have 
\[
\frac{1}{(2\pi)^{n/2}}\int_{\mathbb{R}^n} x^{I} e^{-\frac{x^2}{2} + \sqrt{-1}xy}dx_1 \ldots dx_n 
= (-\sqrt{-1})^{|I|}\left(\frac{\partial}{\partial y}\right)^{I} e^{-\frac{y^2}{2}}.  
\]
Put 
\[
c_{p, j}(y) = \frac{e^{\frac{\upsilon^2}{2}}}{(2\pi)^{n/2}} \int_{x \in \mathbb{R}^n} e^{-\frac{x^2}{2}+\sqrt{-1}xy} Q_{p, j}(y, x) dx_1\cdots dx_n.   
\]
Then $c_{p, j}(y)$ is a polynomial of degree $\leq j$ in $y$, and $c_{p, 2j+1}(0) = 0$ since the degree of each term of $Q_{p, 2j+1}(0, x)$ is odd.   
The coefficients of $c_{p, j}$ are smooth functions of $p$ determined due to its construction.  
Now we have the asymptotic expansion 
\[
\frac{s_{f, k}}{s^k} \circ \Xi(0, \frac{y}{\sqrt{k}}) = f(p) + \sum_{j=1}^{N} \frac{c_{p, j}(y)}{k^{j/2}} + o\left(\frac{1}{k^{N/2}}\right).  
\]
This convergence is uniform for $y \in \mathbb{R}^n$ on any bounded set and $p \in X$.  
Then we have 
\[
 \left(\frac{\pi}{2k}\right)^{n/2}\|s_{f, k}\|^2_{h^k} = \int_{X} s_{f, k}\overline{f} dv_{X} = \int_{X}|f|^2 dv_{X} + \sum_{j=1}^{N} \frac{1}{k^{j}}\int_{X}  c_{p, 2j}(0) \overline{f(p)}dv_{X}(p) + o\left(\frac{1}{k^{N}}\right).  
\]
\end{proof}

\vspace{5mm}

\par\noindent{\scshape \small
Department of Mathematics, \\
Ochanomizu University,  \\
2-1-1 Otsuka, Bunkyo-ku, Tokyo (Japan) }
\par\noindent{\ttfamily chiba.yusaku@ocha.ac.jp}
\end{document}